\def\R{\mathbb R}
\def\C{\mathbb C}
\def\trace{\operatorname{tr}}
\def\Ad{\operatorname{Ad}}
\def\w{\omega}
\def\su{\mathfrak{su}}
 \newtheorem{thm}{Theorem}[section]
 \newtheorem{cor}[thm]{Corollary}
 \newtheorem{lem}[thm]{Lemma}
 \newtheorem{prop}[thm]{Proposition}
 \theoremstyle{definition}
 \newtheorem{defn}[thm]{Definition}
 \theoremstyle{remark}
 \newtheorem{rem}[thm]{Remark}
 \numberwithin{equation}{section}
 \newtheorem{prethought}[thm]{Thought}
 \newtheorem{example}[thm]{Example}
\def\f{\mathsf f}
\begin{document}
\title{Pseudo-spherical Surfaces of Low Differentiability}
\author{Josef F. Dorfmeister \thanks{Zentrum Mathematik, Technische Universit\"{a}t M\"{u}nchen, D-85747 Garching bei M\"{u}nchen, Germany, dorfm@ma.tum.de} \and 
Ivan Sterling \thanks{Mathematics and Computer Science Department, St Mary's College of Maryland, St Mary's City, MD 20686-3001, USA, isterling@smcm.edu}}
\maketitle
%\subjclass{Primary 53A05; Secondary 35Q53,  22E68} 
%\keywords{pseudospherical surfaces; sine-Gordon equation; loop groups}
%\date{June 28, 2009}
%\dedicatory{In Memory of Katsumi Nomizu}
\begin{abstract}\noindent We continue our investigations into Toda's algorithm \cite{T,DIS}; a Weierstrass-type representation of Gauss curvature $K=-1$ surfaces in $\mathbb{R}^3$.  We show that $C^0$ input potentials correspond in an appealing way to a special new class of surfaces, with $K=-1$, which we call $C^{1M}$.  These are surfaces which may not be $C^2$, but whose mixed second partials are continuous and equal.  We also extend several results of Hartman-Wintner \cite{HW} concerning special coordinate changes which increase differentiability of immersions of $K=-1$ surfaces.  We prove a $C^{1M}$ version of Hilbert's Theorem.
\end{abstract}

%\tableofcontents
%\input ds3pre.tex
%\input ds3intro.tex
\section{Introduction}
We continue our investigations into Toda's algorithm \cite{T,DIS}; a Weierstrass-type representation of Gauss curvature $K=-1$ surfaces in $\mathbb{R}^3$.  Briefly the algorithm is as follows (see Section \ref{todaalg} for a precise description):   Assume $\lambda \in \mathbb{R}^+$ is the loop parameter.  Input two loops $\eta = (\eta_+(x),\eta_-(y))$, whose order of differentiability will be a central topic of this paper.  Solve the system of loop ODE's
  \begin{eqnarray*}
  \frac{\partial U_+}{\partial x} & = & U_+ \eta_+, \\
  \frac{\partial U_-}{\partial y} & = & U_- \eta_-.
  \end{eqnarray*}
Then apply Birkhoff's Factorization to $U_-^{-1} U_+$ (which exists globally without singularities by Brander \cite{B}):
 \[U_-^{-1} U_+ = L_+L_-^{-1}.\] 
Let $\widehat{U}:=U_- L_+ = U_+ L_-$.  Then $\widehat{U}$ satisfies a system of the form
  \begin{eqnarray*}
\frac{\partial \widehat{U}}{\partial x} & = &  \frac{i}{2} \widehat{U} 
\left[\begin{array}{cc} -\phi_x  & \lambda \\ \lambda & \phi_x \end{array}\right], \\
  \frac{\partial \widehat{U}}{\partial y} & = &  -\frac{i}{2 \lambda} \widehat{U} 
\left[\begin{array}{cc} 0  & e^{i \phi} \\  e^{-i \phi} & 0 \end{array}\right],
 \end{eqnarray*}
where $\phi$ satisfies the sine-Gordan equation $\phi_{xy}=\sin \phi$ (in some of our cases, only distributionally).
Finally set \[f_\eta=\frac{\partial}{\partial t} \hat{U} \hat{U}^{-1} \Big|_{t=0},\] where $\lambda = e^t$.  Then $f_\eta$ is a surface (whose order of differentiability is studied here) in $\mathbb{R}^3$ with $\cos \phi = <f_{\eta_x},f_{\eta_y}>$ and $K=-1$.

Let $D_{(x,y)}$ be a simply connected open set in $\mathbb{R}^2$ (whenever necessary we will assume $D = J \times J$ for some interval $J$).  A continuously differentiable map $f:D_{(x,y)} \stackrel{C^1}{\longrightarrow} \mathbb{R}^3$ is called a regular immersion at $p$ if $rank_f(p)=2$ (and regular on $D$ if it is regular at every $p \in D$).  Regularity is independent of coordinates. $f$  is called weakly-regular at $p$, with respect to $(x,y)$, if ${<f_x,f_x>\;\neq 0}$ and $<f_y,f_y>\; \neq 0$.  Weakly-regular at $p$ implies $rank_f(p) \geq 1$ , but is actually much stronger. It is important to note that weak-regularity is dependent on coordinates.  $f$ is called Chebyshev at $p$, with respect to $(x,y)$, if  ${<f_x,f_x> = 1}$ and $<f_y,f_y> = 1$.

Let $\hat{f}:D_{(u,v)} \stackrel{C^2}{\longrightarrow} \mathbb{R}^3$ be a twice continuously differentiable regular immersion with parameters $u,v$, and negative Gaussian curvature $K$.  Then at every point $\hat{f}(u,v)$ there is a distinct pair of directions, called the pair of asymptotic directions, characterized by the vanishing of the second fundamental form.  By an asymptotic curve on $\hat{f}$ is meant a curve along which the direction of the tangent is always in an asymptotic direction.  In particular a reparametrization $f=\hat{f} \circ T$, of $\hat{f}$, by a change of coordinates $T:D_{(x,y)} \longrightarrow D_{(u,v)}$, is called a parametrization by asymptotic coordinates if all parameter curves are asymptotic curves.

It is well known that if  $\hat{f}:D \stackrel{C^4}{\longrightarrow} \mathbb{R}^3$ is a four times continuously differentiable regular immersion with $K=-1$, then $\hat{f}$ can be reparametrized by a $C^3$-diffeomorphism $T$ to an $f=\hat{f} \circ T$  asymptotic Chebyshev immersion with $K=-1.$   Furthermore all asymptotic Chebyshev immersions $f:D \stackrel{C^3}{\longrightarrow} \mathbb{R}^3$ with $K=-1$ arise by Toda's \cite{T} loop group algorithm from $C^2$ potentials $\eta$.  Conversely, given a $C^2$ potential $\eta$, Toda's algorithm produces a (possibly weakly-regular) $C^3$ asymptotic Chebyshev immersion with $K=-1$.  Moreover, see \cite{HW}, by a reparametrization one can always (locally) obtain a $C^4$ immersion $\hat{f}=f \circ S$  in graph coordinates where $S:D_{(u,v)} \longrightarrow D_{(x,y)}$.

We can improve these results by two degrees of differentiability.  We first define $C^{1M}$ functions (functions that are $C^1$ and whose mixed partials exist, are equal and are continuous).  Given a regular  $\hat{f}:D \stackrel{C^2}{\longrightarrow} \mathbb{R}^3$ which satisfies $K=-1$, then Hartman-Wintner \cite{HW} proved $\hat{f}$ can be $C^{1M}$-reparametrized by asymptotic Chebyshev coordinates.  Furthermore we prove in this paper that all such immersions arise by Toda's \cite{T} loop group algorithm from $C^0$-potentials.  Conversely, given a $C^0$-potential $\eta$, Toda's algorithm produces a (possibly weakly-regular) $C^{1M}$ asymptotic Chebyshev immersion $f$ with $K=-1$.  Moreover, assuming $f$ is regular, using the method of \cite{HW} we prove that by a reparametrization one can always (locally) obtain from such an $f$ a $C^2$-immersion $\hat{f}=f \circ S$ in graph coordinates.  Finally, we are able to patch these local reparametrizations $S$ together to obtain a global reparametization $\rho$ such that $f_{new} = f \circ \rho$ is a global $C^2$-immersion.

For a regular immersion
\[f:D_{(x,y)} \stackrel{C^3}{\longrightarrow} \mathbb{R}^3,\] 
by asymptotic Chebyshev coordinates the following are known to be equivalent.
\begin{enumerate}
\item The immersion has constant negative Gauss curvature.
\item The angle $\omega$ between the asymptotic lines is never $0$ (or $\pi$) and $\omega_{xy}=\sin \omega$.  
\item \label{BelEnn}  The asymptotic curves are of constant torsion.
\item The Gauss map $N$ is Lorentz harmonic and $N_{xy}= \cos{\omega} \;N$, where $\omega$ denotes the angle between asymptotic lines.
\item There exists some $C^2$ input potential $\eta$ (in the sense of Toda \cite{T}) such that $f=f_\eta$.
\end{enumerate}

The converse of item two is also true, therefore from the PDE point of view we are trying to solve the sine-Gordon equation
\begin{equation}\label{sgord} \phi_{xy} = \sin{\phi}. \end{equation}
Equation (\ref{sgord}) is the integrability condition arising in Toda's loop construction.  That solutions to the sine-Gordon equation correspond to families of $K=-1$ surfaces has been known since the 1840's.

In \cite{DIS} we showed that asymptotic Chebyshev immersions $f:D_{(x,y)} \stackrel{C^n}{\longrightarrow} \mathbb{R}^3$, $n \geq 3$ with $K=-1$, arise in the loop group approach from potentials $\eta$  of type $C^{n-1}$.  Conversely we showed that potentials $\eta$  of type $C^{n-1}$, $n \geq 3$, give rise to (possibly weakly-regular)  asymptotic Chebyshev immersions $f:D_{(x,y)} \stackrel{C^n}{\longrightarrow} \mathbb{R}^3$, $n \geq 3$ with $K=-1$.   Here we will consider the cases n=1, 2 and thus the case of input potentials  $\eta$ of type $C^0$ and $C^1$ and the maps $f_\eta$ they generate, even though much of the case of $n=2$ is already contained in \cite{DIS}.  For the cases $n=1$ and $n=2$ we will revisit and discuss in detail aspects of the five equivalent properties listed above.  That is the main goal of this paper.

There is an extensive body of work on surfaces of negative curvature  (including those of constant negative curvature) of low differentiability (see \cite{R},\cite{BS} and their extensive references).  There is also an extensive literature going back over one hundred years for discrete pseudo-spherical surfaces (see \cite{HPS} for some references) that complements and often informs the $C^r, \; r \leq \omega,$ theory.

%There is no general agreement on how to define a $C^1$ surface of constant negative curvature.  Because %of this we refrain here from defining a what a $C^1$ ps-immersion is.  Instead we give a definition that %would arise naturally in the loop group approach, which we call a typeA-immersion and also a definition that %would arise naturally by considering the Lorentz harmonic normal mapping (in particular in the discrete %setting) which we call a typeB-immersion.  We show that the two definitions are equivalent.

%To be more precise.  
We define a weakly-regular immersion $N:D  \stackrel{C^{1M}}{\longrightarrow} \mathbb{S}^2$  to be weakly-(Lorentz) harmonic if there exist a function $h:D \longrightarrow \mathbb{R}$ such that $N_{xy}(p) = N_{yx}(p) = h(p) N(p)$ for all $p \in D$.  Then a weakly-regular immersion $f:D \stackrel{C^{1M}}{\longrightarrow} \mathbb{R}^3$ is called a \textit{ps-front} if there exist a weakly-harmonic $N:D  \stackrel{C^{1M}}{\longrightarrow} \mathbb{S}^2$  such that $f_x=N \times N_x$, $f_y= -N \times N_y$.  This is a special case of a front, as defined in \cite{SUY}.

The outline of the paper is as follows.  In Section \ref{todaalg} we start with a $C^0$ input potential $\eta$ and apply Toda's loop group algorithm to construct a   weakly-regular asymptotic Chebyshev ps-front $f_\eta:D \stackrel{C^{1M}}{\longrightarrow} \mathbb{R}^3$. We show that any such map has a second fundamental form with $K=-1$ and that the angle $\omega$ between the asymptotic lines satisfies a version of the sine-Gordon equation $\omega_{xy}=\sin \omega$.  In Section \ref{converse} we show the converse, that for any such $f$ there exists a $C^0$ input potential $\eta$ such that $f=f_\eta$. In Section \ref{hwthy} we carry out the Hartman-Wintner \cite{HW} theory as discussed above. We prove a $C^{1M}$ version of Hilbert's Theorem in Section \ref{hilbert}.  In Section \ref{cusppics} we show, by way of the PS-sphere how our frame and fronts manage to behave well, even along a singular cusp line.  
%Section \ref{conclusions} contains our concluding remarks and questions\textcolor{blue}{, including a brief discussion of how our results relate to Hilbert's Problem concerning isometric immersions}.

\section{From $C^0$ Potentials to $C^{1M}$-Immersions} \label{todaalg}
\subsection{$C^{1M}$ functions}
\begin{defn} \label{defnregs1}Let $g:D \stackrel{C^1}{\longrightarrow} \mathbb{R}$.  Then
$g$ is called $C^{1M}$ at $p \in D$ if $g_{xy}$ and $g_{yx}$ exist, are continuous, and are equal at $p$.
Let $f:D \stackrel{C^1}{\longrightarrow} \mathbb{R}^3$.  Then $f$ is called $C^{1M}$ at $p \in D$ if all its components are.
\end{defn}
\begin{example} \label{obvious}
$f(x,y) = x^{\frac{4}{3}}y^{\frac{4}{3}}$ is $C^{1M}$ everywhere,  but is not $C^{2}$ at the origin.
\end{example}  
\begin{example}
The $C^{1M}$ condition is not invariant under a regular change of coordinates.  For example if we let $x= u+v$ and $y=u-v$ in Example \ref{obvious}, then it is still $C^1$ everywhere, but not $C^{1M}$ with respect to $u,v$ at the origin.
\end{example}
\begin{example}The function 
\[f(x,y) = \int_{-1}^x \int_y^\infty \frac{cos(tw) -1}{w^2} dw dt \]
is $C^{1M}$ on $\mathbb{R}^2$, but is not $C^2$ along a line.
\end{example}

 \subsection{The Curvature of Regular $C^{1M}$-Immersions} \label{kimm}
We point out the precise conditions required for familiar definitions for immersions.
\begin{defn}
If $f:D_{(x,y)} \stackrel{C^1}{\longrightarrow} \mathbb{R}^3$, then $E:=<f_x,f_x>$, $F:=<f_x,f_y>$, and $G=<f_y,f_y>$.  If moreover $N$ is $C^1$, then we define $\ell:=-<f_x,N_x>$ and $n:=-<f_y,N_y>$.  If furthermore $f$ is $C^{1M}$, then we define ${m:=<f_{xy},N>}={<f_{yx},N>}=-<f_x,N_y>=-<f_y,N_x>$ and finally if $f$ is regular we define the Gauss curvature of such an immersion by
\[K:=\frac{\ell n - m^2}{EG-F^2}.\]
\end{defn}
\noindent These are natural definitions perfectly suited to the $C^{1M}$-immersion setting.  We will need the expected result that under reasonable conditions the Gauss curvature of an immersion is independent of coordinates.
\begin{lem}
If $f:D_{(x,y)} \stackrel{C^{1M}}{\longrightarrow} \mathbb{R}^3$ is a regular immersion, with $N$ $C^1$ and $T:D_{(u,v)} \stackrel{C^{1}}{\longrightarrow} D_{(x,y)}$ a diffeomorphism with $\tilde{f}:=f \circ T$ also $C^{1M}$, then $\tilde{K}=K$.
\end{lem}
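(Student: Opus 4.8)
The plan is to show that both fundamental forms transform by congruence with the Jacobian of $T$, so that in the quotient defining $K$ the factor $(\det J)^2$ cancels. Write $T(u,v)=(x(u,v),y(u,v))$ and let $J=\begin{pmatrix} x_u & x_v\\ y_u & y_v\end{pmatrix}$ be its Jacobian, which is invertible with $\det J\neq 0$ since $T$ is a diffeomorphism. Because $f$ and $T$ are $C^1$, the ordinary chain rule (for \emph{first} derivatives only) applies and gives, in the notation where $[f_x\mid f_y]$ is the $3\times 2$ matrix with the indicated columns,
\[
[\tilde f_u\mid\tilde f_v]=[f_x\mid f_y]\,J .
\]
Setting $\tilde N:=N\circ T$, which is again $C^1$ and is a unit normal for $\tilde f$ since $N\perp f_x,f_y$ implies $\tilde N\perp\tilde f_u,\tilde f_v$, the same chain rule yields $[\tilde N_u\mid\tilde N_v]=[N_x\mid N_y]\,J$.

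The essential point — and the reason the lemma is not merely the classical coordinate-invariance of $K$ — is that one must never differentiate twice through $T$. Under the hypotheses $\tilde f$ need not be $C^2$, so the expression $\tilde m=\langle\tilde f_{uv},\tilde N\rangle$ cannot be obtained from $f_{xy}$ by a second-order chain rule. I would therefore work exclusively with the first-order pairings recorded in the preceding definition: for the $C^{1M}$ immersion $f$ with $C^1$ normal $N$ one has $\ell=-\langle f_x,N_x\rangle$, $m=-\langle f_x,N_y\rangle=-\langle f_y,N_x\rangle$, $n=-\langle f_y,N_y\rangle$, and likewise $\tilde m=-\langle\tilde f_u,\tilde N_v\rangle=-\langle\tilde f_v,\tilde N_u\rangle$ in $(u,v)$-coordinates. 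The $C^{1M}$ property of $\tilde f$ is exactly what guarantees the equality of the two mixed pairings, hence that $\tilde{II}$ is a genuine symmetric matrix. All of these quantities involve only first derivatives of $f,N$ and of $\tilde f,\tilde N$, so they transform tensorially under the $C^1$ map $T$.

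Assembling the frame identities into Gram matrices, I would then read off
\[
\tilde I=[\tilde f_u\mid\tilde f_v]^{T}[\tilde f_u\mid\tilde f_v]=J^{T}IJ,\qquad
\tilde{II}=-[\tilde f_u\mid\tilde f_v]^{T}[\tilde N_u\mid\tilde N_v]=J^{T}\,II\,J,
\]
where $I=\begin{pmatrix} E & F\\ F & G\end{pmatrix}$ and $II=\begin{pmatrix}\ell & m\\ m & n\end{pmatrix}$. Taking determinants gives $\det\tilde I=(\det J)^2\det I$ and $\det\tilde{II}=(\det J)^2\det II$; since $f$, and therefore $\tilde f$, is regular, both $\det I$ and $\det\tilde I$ are nonzero. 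Dividing,
\[
\tilde K=\frac{\det\tilde{II}}{\det\tilde I}=\frac{(\det J)^2\det II}{(\det J)^2\det I}=\frac{\det II}{\det I}=K,
\]
as claimed.

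I expect the only genuine obstacle to be the careful bookkeeping of differentiability: the temptation is to transform $II$ through its $\langle f_{xy},N\rangle$ description, which would demand $f\in C^2$ and a second-order chain rule through $T$, whereas the argument must proceed entirely through the first-order pairings $-\langle f_x,N_y\rangle$ that remain meaningful and transform correctly for $C^1$ maps. A minor point to dispatch is the normalization of $N$: taking $\tilde N=N\circ T$ keeps everything consistent, and in any case $\det II$ is invariant under $N\mapsto -N$, so the value of $K$ does not depend on that choice.
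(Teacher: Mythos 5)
Your proof is correct and is essentially the paper's argument: the paper's entire proof is ``Note $\tilde{N}=N\circ T$ and use the chain rule,'' and you have simply carried out that chain-rule computation in full, correctly observing that everything must be routed through the first-order pairings $-\langle f_x,N_y\rangle$ rather than through $f_{xy}$. The congruence $\tilde I=J^{T}IJ$, $\tilde{II}=J^{T}\,II\,J$ and the cancellation of $(\det J)^2$ is exactly what the paper intends.
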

\begin{proof}
Note $\tilde{N} = N \circ T$ and use the chain rule.
\end{proof}

\subsection{Loops} 
We will be dealing with 1-forms (for example $\eta_-$ and $\eta_+$ below) on $D$ taking values in the loop algebra
\begin{equation*}\label{algloopdef}
\Lambda_{\sigma}\su(2) = \{ X: \R^* \to \su(2) |\, X(-\lambda) = \Ad(\sigma_3)\cdot X(\lambda) \},
\text{where } \sigma_3 = \left( \begin{smallmatrix} 1 & 0 \\ 0 & -1\end{smallmatrix}\right).
\end{equation*}
Likewise, we will have maps (for example $U+$ and $U_-$ below)  from $D$ to the loop group
$$\Lambda_{\sigma} SU(2) = \{ g: \R^* \to SU(2) | \,g(-\lambda) = \Ad(\sigma_3)\cdot g(\lambda) \}.$$
Loops satisfying the $\Ad(\sigma_3)$ condition are sometimes referred to as {\em twisted}.
We will be specifically interested in those subgroups, denoted by $\Lambda_\sigma \su(2)$ and $\widetilde{\Lambda}_\sigma SU(2)$
respectively, consisting of loops which extend to $\C^*$ as  analytic functions of $\lambda$.
(Note, however, that such extensions will take values in $\mathfrak{sl}(2,\C)$ and $SL(2,\C)$ respectively.)
In fact, the goal of the method is to recover such loops from analytic data specified along
a pair of characteristic curves in $D$.

\noindent Within the group of loops that extend analytically to $\C^*$, we define subgroups of loops which extend
to $\lambda=0$ or $\lambda=\infty$:
\begin{align*}
\Lambda_{\sigma}^+ SU(2) &= \{ g \in \widetilde{\Lambda}_\sigma SU(2) | g = g_0 + \lambda g_1 + \lambda^2 g_2 + \ldots \} \\
\Lambda_{\sigma}^-SU(2) &=\{ g \in \widetilde{\Lambda}_\sigma SU(2) | g = g_0 + \lambda^{-1} g_1 + \lambda^{-2} g_2 + \ldots \}
\end{align*}
Within these, we let $\Lambda_{\sigma_*}^+ SU(2)$ and $\Lambda_{\sigma_*}^- SU(2)$
be the subgroups of loops where $g_0$ is the identity matrix.  A key tool we will use is
\begin{thm}[Birkhoff Decomposition (Brander \cite{B}, Toda \cite{T})]
The multiplication maps
$$\Lambda_{\sigma_*}^+ SU(2) \times \Lambda_{\sigma}^- SU(2) \to \widetilde{\Lambda}_\sigma SU(2), \qquad
\Lambda_{\sigma_*}^- SU(2) \times \Lambda_{\sigma}^+ SU(2) \to \widetilde{\Lambda}_\sigma SU(2)$$
are diffeomorphisms.
\end{thm}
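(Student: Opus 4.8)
The plan is to show that each multiplication map is a bijection with smooth inverse; since the involution $\lambda \mapsto \lambda^{-1}$ interchanges the roles of $+$ and $-$, it suffices to treat the first map, which I abbreviate $\mu: \Lambda_{\sigma_*}^+ SU(2) \times \Lambda_{\sigma}^- SU(2) \to \widetilde{\Lambda}_\sigma SU(2)$, $(g_+,g_-)\mapsto g_+ g_-$. I would break the argument into three parts: injectivity (uniqueness of the factorization), surjectivity (existence), and smoothness of the inverse.

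First, injectivity is a Liouville-type argument. Suppose $g_+ g_- = h_+ h_-$ with $g_+,h_+ \in \Lambda_{\sigma_*}^+ SU(2)$ and $g_-,h_- \in \Lambda_{\sigma}^- SU(2)$. Then $h_+^{-1} g_+ = h_- g_-^{-1}$. The left-hand side extends holomorphically to $\lambda = 0$ (a product of loops in $\Lambda^+$), while the right-hand side extends holomorphically to $\lambda = \infty$; since they agree on $\C^*$, together they define a single $SL(2,\C)$-valued holomorphic map on the Riemann sphere $\mathbb{P}^1$. Each matrix entry is then a bounded entire function, hence constant, so $h_+^{-1} g_+$ is a constant loop. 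Evaluating at $\lambda = 0$ and invoking the normalization that the constant terms of $g_+$ and $h_+$ are the identity forces this constant to equal the identity, whence $g_+ = h_+$ and $g_- = h_-$.

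The main obstacle is surjectivity, i.e. showing every $g \in \widetilde{\Lambda}_\sigma SU(2)$ factors with no ``middle term''. General Birkhoff theory gives $g = g_+ \, \lambda^{D} \, g_-$ with $D = \mathrm{diag}(k,-k)$ recording the partial indices, where $\det g \equiv 1$ already forces the two indices to sum to zero; the claim is exactly that $k=0$ for every $g$ in this analytic, $SU(2)$-valued, twisted setting. Here I would exploit two structural features: the $SU(2)$ reality condition (which on $|\lambda|=1$ reads $g^* = g^{-1}$) pairs the partial indices with a definite symmetry, and a Wiener--Hopf / Grassmannian positivity argument — of the Pressley--Segal type, carried out in this analytic category by Brander \cite{B} — rules out $k \neq 0$ by showing the associated Toeplitz operator is an isomorphism. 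Concretely, one models $g$ by the closed subspace $g \cdot H_+$ of the relevant space of loops and shows it is transverse to $H_-$, the compactness of $SU(2)$ being precisely what guarantees this transversality globally. This index-vanishing is the substantive content imported from \cite{B,T}, which I would cite rather than reprove from scratch.

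Finally, smoothness of the inverse follows from the inverse function theorem in the relevant Banach (or tame Fréchet) Lie group category. Having established that $\mu$ is a smooth bijection, I would verify that its differential at each point is a linear isomorphism; this reduces, via the uniqueness argument above, to the additive splitting $\Lambda_\sigma \mathfrak{sl}(2,\C) = \Lambda_{\sigma_*}^+ \oplus \Lambda_\sigma^-$ at the Lie-algebra level, which is manifestly an isomorphism. The inverse function theorem then yields that $\mu^{-1}$ is smooth, so $\mu$ is a diffeomorphism, and the second multiplication map is handled identically after applying $\lambda \mapsto \lambda^{-1}$.
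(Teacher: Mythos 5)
The paper offers no proof of this theorem: it is quoted as an imported result of Brander and Toda, with only a following remark noting that the general Birkhoff decomposition gives an open dense big cell and that Brander's theorem shows the big cell is everything for compact semisimple groups such as $SU(2)$. Your sketch is a correct reconstruction of the standard argument (Liouville for uniqueness, inverse function theorem via the Lie-algebra splitting for smoothness) that defers exactly the same substantive point --- the vanishing of the partial indices, i.e.\ that the big cell is the whole group --- to the same reference, so it is essentially the paper's approach with the routine scaffolding filled in.
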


\begin{rem} In general, the Birkhoff decomposition theorem asserts that
the multiplication maps are analytic diffeomorphisms onto an open dense subset, known as the {\bf big cell}.  However, it follows from the result of Brander \cite{B} that in the case of compact semisimple Lie groups like $SU(2)$, the big cell is everything.
\end{rem}

\begin{rem}
We will make repeated use of the fact that an element $g_+$ of $\Lambda_{\sigma}^+ SU(2)$, $g_+ = g_0 + \lambda g_1 + \lambda^2 g_2 + \ldots$ has for even labels $g_{2k}$ diagonal matrices of the form $g_{2k}=\left(\begin{array}{cc} a_{2k} & 0 \\ 0 & \overline{a_{2k}} \end{array}\right)$ and for odd labels
$g_{2k+1} = \left(\begin{array}{cc} 0 & b_{2k+1} \\  -\overline{b_{2k+1}}  & 0 \end{array}\right)$.  For example, we have 
\begin{eqnarray*}
1 &=& \det g_+ = \left(\begin{array}{cc}a_0+\lambda^2 a_2 & \lambda b_1 \\
-\lambda \bar{b_1} & \overline{a_0}+\lambda^2 \overline{a_2} \end{array}\right) \mod \lambda^3 \\
&=& |a_0|^2 + (a_0 \overline{a_2} + \overline{a_0} a_2) \lambda^2 + \lambda^2 |b_1|^2 + \cdots
\end{eqnarray*}
Hence $|a_0|=1$ and $a_0 \overline{a_2} + \overline{a_0} a_2= -|b_1|^2$.  Of course, the analogous remark would hold for $g_- \in \Lambda_{\sigma}^- SU(2).$
\end{rem}

\subsection{Potential to Frame} 
Let the input potential
\begin{equation} \eta  = \big{(}\eta_-(y) dy, \eta_+(x) dx \big{)}  \end{equation}
be a pair of matrices of the form
\begin{eqnarray}
\eta_-(y) & = & -\frac{i}{2} \lambda^{-1} 
\left(\begin{array}{cc}
0 & e^{i \beta(y)} \\
e^{-i \beta(y)} & 0 
\end{array} \right), \\
\eta_+(x) & = &  \frac{i}{2} \lambda
\left(\begin{array}{cc}
0 & e^{-i \alpha(x)} \\
e^{i \alpha(x)} & 0 
\end{array} \right), 
\end{eqnarray}
where $\alpha, \beta$ are $C^0$ functions which are defined on some open intervals $J_-$ and $J_+$  respectively, and where $\lambda > 0$.  For simplicity of notation we will assume without loss of generality $0 \in J_\pm$.  According to the construction procedure of \cite{DIS} we solve next the ODEs:
\begin{alignat}{3}
U_{+_x} & =  U_+ \eta_+,  & \;\; U_+(0,\lambda) & =I, \\
U_{-_y} & =  U_- \eta_-,  & \;\; U_-(0,\lambda) & =I.
\end{alignat}

\begin{rem}
$U_+(x,\lambda)$ is $C^1$ in $x$ and $U_-(y,\lambda)$ is $C^1$ in $y$.  Both have holomorphic extensions to $\lambda \in \mathbb{C}^*$.
\end{rem}
\noindent The next step in the procedure of \cite{DIS} is the Birkhoff splitting
\begin{equation} \label{birk}
U_-(y,\lambda)^{-1}U_+(x,\lambda) = L_+(x,y,\lambda) L_-(x,y,\lambda)^{-1},
\end{equation}
where
\begin{eqnarray}
L_+ &=& L_{+_0} + \lambda L_{+_1} + \cdots, \\
L_- &=& I + \lambda^{-1} L_{-_{-1}} + \cdots.
\end{eqnarray}
We set 
\begin{equation} \label{mainUsplit}
\widehat{U} := U_-L_+=U_+L_-.
\end{equation}
\begin{rem}
$L_+,L_-,\widehat{U}$ are $C^1$ in $(x,y)$.  This follows from the fact that the left side of (\ref{birk}) is $C^1$ in $(x,y)$ and, by a result of Brander \cite{B}, the Birkhoff splitting is global and analytic in the coefficients of $U_-^{-1}U_+$.
\end{rem}
\begin{lem} \label{Uform}
Under the assumptions of this section we obtain
\begin{enumerate}[label={\rm{\alph*)}},ref={\alph*)}]
\item \label{Uforma} \mbox{} \vspace{-.3in} \begin{eqnarray}
\widehat{U}(0,0,\lambda) &=& I \label{birkform1}
\end{eqnarray}
\item \label{Uformb} \mbox{} \vspace{-.3in} \begin{eqnarray}
\widehat{U}^{-1} \widehat{U}_y &=& -\frac{1}{2} \lambda^{-1} \left( \begin{array}{cc} 0 & p \\ -\bar{p} & 0 \end{array} \right),  \label{birkform2}
\end{eqnarray}
\item \label{Uformc} \mbox{} \vspace{-.3in} \begin{eqnarray}
\widehat{U}^{-1} \widehat{U}_x &=& \frac{1}{2} \left(\begin{array}{cc} i r & \lambda q \\ -\lambda \bar{q} & -i r \end{array}\right).  \label{birkform3}
\end{eqnarray}
\end{enumerate}
with $C^0$ functions $p,q, r$  and $r$ real.
\end{lem}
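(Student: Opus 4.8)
The plan is to read off the two logarithmic derivatives $\widehat{U}^{-1}\widehat{U}_x$ and $\widehat{U}^{-1}\widehat{U}_y$ from the two factorizations $\widehat{U}=U_-L_+=U_+L_-$ of (\ref{mainUsplit}). The mechanism is that $U_-$ depends only on $y$ and $U_+$ only on $x$, so in each derivative one of the two factors differentiates trivially; computing the same quantity from both sides then sandwiches its $\lambda$-degree into a single band of powers, after which the residual matrix structure is fixed by the group. For part \ref{Uforma} I would use uniqueness of the Birkhoff splitting: at $(0,0)$ the normalizations $U_+(0,\lambda)=U_-(0,\lambda)=I$ give $U_-^{-1}U_+=I$, hence $L_+L_-^{-1}=I$, i.e.\ $L_+=L_-$; as $L_+$ carries only nonnegative powers of $\lambda$ and $L_-$ only nonpositive powers with constant term $I$, both equal $I$, so $\widehat{U}(0,0,\lambda)=U_-(0,\lambda)L_+(0,0,\lambda)=I$.

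For part \ref{Uformb}, differentiating $\widehat{U}=U_+L_-$ in $y$ and using $U_{+_y}=0$ gives $\widehat{U}^{-1}\widehat{U}_y=L_-^{-1}L_{-_y}$, which carries only powers $\lambda^{-1},\lambda^{-2},\dots$. Differentiating $\widehat{U}=U_-L_+$ in $y$ and using $U_{-_y}=U_-\eta_-$ gives $\widehat{U}^{-1}\widehat{U}_y=L_+^{-1}\eta_-L_++L_+^{-1}L_{+_y}$; since $\eta_-$ carries the single factor $\lambda^{-1}$ and $L_+,L_+^{-1},L_{+_y}$ carry only nonnegative powers, this has no power below $\lambda^{-1}$. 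Comparing the two, $\widehat{U}^{-1}\widehat{U}_y$ is purely of degree $-1$, say $c\,\lambda^{-1}$ with $c$ independent of $\lambda$. Because $\eta_\pm$ take values in $\Lambda_\sigma\su(2)$, the $U_\pm$ and hence $\widehat{U}$ lie in $\widetilde{\Lambda}_\sigma SU(2)$; differentiating $\widehat{U}^*\widehat{U}=I$ and $\det\widehat{U}=1$ shows $\widehat{U}^{-1}\widehat{U}_y$ is traceless skew-Hermitian for real $\lambda$, while the twisting $\widehat{U}(-\lambda)=\Ad(\sigma_3)\,\widehat{U}(\lambda)$ forces $c$ to be off-diagonal. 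These two conditions pin $c$ down to $-\tfrac12\big(\begin{smallmatrix}0&p\\-\bar p&0\end{smallmatrix}\big)$, which is (\ref{birkform2}).

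Part \ref{Uformc} is the mirror image. Differentiating $\widehat{U}=U_-L_+$ in $x$ (so $U_{-_x}=0$) gives $\widehat{U}^{-1}\widehat{U}_x=L_+^{-1}L_{+_x}$, carrying only nonnegative powers, while differentiating $\widehat{U}=U_+L_-$ in $x$ (so $U_{+_x}=U_+\eta_+$, with $\eta_+$ carrying the single factor $\lambda$) gives $L_-^{-1}\eta_+L_-+L_-^{-1}L_{-_x}$, carrying no power above $\lambda^{1}$. Hence $\widehat{U}^{-1}\widehat{U}_x=c_0+c_1\lambda$. The $\su(2)$-reality again makes $c_0,c_1$ traceless skew-Hermitian, and the twisting forces $c_0$ diagonal and $c_1$ off-diagonal; this is exactly (\ref{birkform3}), with the diagonal entries purely imaginary so that $r$ is real.

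The regularity assertions cost little. By the remark preceding the lemma, $L_\pm$ and $\widehat{U}$ are $C^1$ in $(x,y)$, so each logarithmic derivative above is a product of a $C^1$ factor with a continuous first derivative, hence continuous; thus $p,q,r$ are $C^0$, and $r$ is real by the skew-Hermitian and diagonal analysis. I expect the only delicate point to be the bookkeeping in the sandwiching step, namely keeping the two one-sided degree bounds honest so that they genuinely intersect in the claimed single band, the reality-and-twisting normalization being routine. It is worth noting, in keeping with the low-differentiability theme, that the argument never differentiates $\widehat{U}$ twice in $(x,y)$, so it stays within the available $C^1$ regularity throughout.
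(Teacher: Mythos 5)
Your proposal is correct and follows essentially the same route as the paper: part \ref{Uforma} by uniqueness of the Birkhoff splitting at $(0,0)$, and parts \ref{Uformb}, \ref{Uformc} by computing $\widehat{U}^{-1}\widehat{U}_y=L_-^{-1}L_{-_y}$ and $\widehat{U}^{-1}\widehat{U}_x=L_+^{-1}L_{+_x}$ from the two factorizations in (\ref{mainUsplit}), with the $\lambda$-degree band pinned down by comparing against the other factorization and the matrix shape fixed by the twisting and $\su(2)$-reality. The paper's own proof is just a terser version of this (it states the forms directly and carries out the explicit two-sided comparison only in the computation immediately following the lemma), so you have simply filled in the details it leaves implicit.
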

\begin{proof}
\ref{Uforma} The left side of (\ref{birk}) is $I$ at $(0,0)$.  The splitting of the right side of (\ref{birk}) is unique.\\
\ref{Uformc} The equation $\widehat{U}=U_-L_+$ implies 
\[\widehat{U}^{-1} \widehat{U}_x = L_+^{-1} L_{+_x} = \frac{1}{2} \left(\begin{array}{cc} i r & \lambda q \\ -\lambda \bar{q} & -i r \end{array}\right).\]  
\ref{Uformb} From $\widehat{U}=U_+L_-$ we obtain $\widehat{U}^{-1}\widehat{U}_y = L_-^{-1} L_{-_y}$.  This has the form stated.
\end{proof}
\begin{rem} We will see below that $p,q,r$ have a very specific form and higher degrees of partial differentiability.
\end{rem}
\noindent We want to relate the matrix entries of the Lemma \ref{Uform} more closely to $\eta_+,\eta_-$.  Setting $x=0$ and $y=0$ respectively in (\ref{mainUsplit}) we obtain
\begin{eqnarray*}
\widehat{U}(x,0,\lambda) &=& U_-(0,\lambda) L_+(x,0,\lambda) = U_+(x,\lambda) L_-(x,0,\lambda),  \\
\widehat{U}(0,y,\lambda) &=& U_-(y,\lambda) L_+(0,y,\lambda) = U_+(0,\lambda) L_-(0,y,\lambda),
\end{eqnarray*}
and therefore
\begin{eqnarray*}
\lefteqn{\frac{1}{2}\left(\begin{array}{cc}i r(x,0) & \lambda q(x,0) \\ 
-\lambda \overline{q(x,0)} & -i r(x,0) \end{array}\right)}   \\
& = &L_-^{-1}(x,0,\lambda) \eta_+(x) L_-(x,0,\lambda) + L_-^{-1}(x,0,\lambda) L_{-_x}(x,0,\lambda), \\
\lefteqn{-\frac{1}{2} \lambda^{-1} \left(\begin{array}{cc} 0 & p(0,y) \\
-\overline{p(0,y)} & 0 \end{array}\right)}   \\
 &=&L_+^{-1}(0,y,\lambda) \eta_-(y) L_+(0,y,\lambda) + L_+^{-1}(0,y,\lambda) L_{+_y}(x,0,\lambda) .
\end{eqnarray*}
From (\ref{birk}) we obtain by the uniqueness of the Birkhoff splitting
\begin{eqnarray*}
L_-(x,0,\lambda)&=& I,\\
L_+(0,y,\lambda) & = & I,
\end{eqnarray*}
hence
\[ r(x,0)=0, q(x,0) = i e^{-\alpha(x)}, p(0,y)=i e^{i \beta(y)}. \]
To compute $\widehat{U}^{-1} \widehat{U}_x = L_-^{-1} \eta_+ L_- + L_-^{-1} L_{-_x}$ we let
\[L_- = \left(\begin{array}{cc}1 & \ell_{-1} \lambda^{-1} \\
 -\bar{\ell}_{-1}\lambda^{-1} & 1 \end{array}\right) \mod \lambda^{-2}.\]
This implies 
\[L_-^{-1} = \left(\begin{array}{cc}1 & -\ell_{-1} \lambda^{-1} \\
 \bar{\ell}_{-1}\lambda^{-1} & 1 \end{array}\right) \mod \lambda^{-2},\]
 and hence
 \[L_-^{-1} L_{-_x} = 0 \mod \lambda^{-1}.\]
 Thus
\begin{eqnarray*} 
\left(\begin{array}{cc} i r & \lambda q \\ -\lambda \bar{q} & -i r \end{array}\right) \doteq && \\
&\hspace{-1in} \left(\begin{array}{cc}1 & \ell_{-1} \lambda^{-1} \\
 -\bar{\ell}_{-1}\lambda^{-1} & 1 \end{array}\right) 
 \left(\begin{array}{cc}
0 & e^{-i \alpha(x)} \\
e^{i \alpha(x)} & 0 
\end{array} \right)
 \left(\begin{array}{cc}1 & -\ell_{-1} \lambda^{-1} \\
 \bar{\ell}_{-1}\lambda^{-1} & 1 \end{array}\right). 
 \end{eqnarray*}
This yields $q(x,y)=i e^{-i\alpha(x)}$ and $r(x,y)=|\ell_{-1}(x,y) e^{i \alpha(x)}|^2$ and shows that both $q$ and $r$ are $C^0$ in $x$ and $C^1$ in $y$.  Similarly, $\widehat{U}^{-1}\widehat{U}_y = L_+^{-1} \eta_- L_+ + L_+^{-1} L_{+_y}$ implies $p(x,y) = i e^{i \beta(y)} \ell_{+_0}(x,y)$ where $|\ell_{+_0}(x,y)|=1$ and that $p$ is $C^1$ in $x$ and $C^0$ in $y$.    Hence altogether we obtain (compare \cite{DIS}):
\begin{thm} \label{omegas}
Under the assumptions of this section we obtain
\begin{eqnarray*}
\widehat{U}^{-1} \widehat{U}_y &=& -\frac{i}{2} \lambda^{-1}
\left(\begin{array}{cc}0 & e^{i\widehat{\phi}(x,y)} \\ e^{-i\widehat{\phi}(x,y)} & 0  \end{array}\right) =: \widehat{\omega}_2, \\
\widehat{U}^{-1} \widehat{U}_x &=& \frac{i}{2}
\left(\begin{array}{cc}r & \lambda e^{-i\alpha(x)} \\ 
\lambda e^{i\alpha(x)} & -r  \end{array}\right) =: \widehat{\omega}_1 ,
\end{eqnarray*}
with real functions $\alpha,r$ ($C^0$ in $x$, $C^1$ in $y$) and a real function  $\widehat{\phi}$ ($C^0$ in $y$, $C^1$ in $x$) and $r(x,0)=0,\widehat{\phi}(0,y)=\beta(y)$.  Here we let $p(x,y) =  i e^{i \beta(y)} \ell_{+_0}(x,y) = i e^{i \widehat{\phi}(x,y)}$ for a real valued function $\widehat{\phi}(x,y)$.
\end{thm}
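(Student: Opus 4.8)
The plan is to obtain Theorem~\ref{omegas} by refining Lemma~\ref{Uform}: the lemma already fixes the matrix shapes and the $C^0$ regularity of $p,q,r$, so what remains is (i) to identify the entries explicitly in terms of $\alpha,\beta$, (ii) to pin the boundary data, and (iii) to upgrade the regularity in the transverse variable from $C^0$ to $C^1$. First I would exploit that $U_-$ is independent of $x$ and $U_+$ of $y$. This turns the two defining identities $\widehat U=U_-L_+=U_+L_-$ into $\widehat U^{-1}\widehat U_x=L_+^{-1}L_{+_x}=L_-^{-1}\eta_+L_-+L_-^{-1}L_{-_x}$ and $\widehat U^{-1}\widehat U_y=L_-^{-1}L_{-_y}=L_+^{-1}\eta_-L_++L_+^{-1}L_{+_y}$. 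Comparing the two expressions for $\widehat\omega_1$: the first is holomorphic at $\lambda=0$ (only powers $\ge 0$), while in the second $L_-=I+O(\lambda^{-1})$ contributes only powers $\le 1$ and fixes the top coefficient of the conjugate $L_-^{-1}\eta_+L_-$ to be that of $\eta_+$ itself; hence $\widehat\omega_1$ carries only the powers $\lambda^0,\lambda^1$ and its $\lambda^1$-entry is exactly that of $\eta_+$, giving $q=ie^{-i\alpha(x)}$. The same bookkeeping forces $\widehat\omega_2$ to be a pure $\lambda^{-1}$-term. The twisting and reality conventions recorded in the Remark on $\Lambda_{\sigma}^{\pm}SU(2)$ then make the $\lambda^0$ diagonal entry $r$ real and give $|\ell_{+_0}|=1$, so that $p=ie^{i\beta(y)}\ell_{+_0}$ has modulus one and $\widehat\phi$ is well-defined by $p=:ie^{i\widehat\phi}$.

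Next I would read off the boundary normalizations by restricting the factorization~(\ref{birk}) to $y=0$ and to $x=0$. Since $\eta_+=O(\lambda)$ and $\eta_-=O(\lambda^{-1})$ with $U_\pm(0)=I$, we have $U_+\in\Lambda_{\sigma_*}^+SU(2)$ and $U_-^{-1}\in\Lambda_{\sigma_*}^-SU(2)$; uniqueness of the Birkhoff splitting then forces $L_-(x,0,\lambda)=I$ and $L_+(0,y,\lambda)=I$. Substituting these into the expressions above yields $\widehat\omega_1(x,0)=\eta_+(x)$ and $\widehat\omega_2(0,y)=\eta_-(y)$, whence $r(x,0)=0$ and $\widehat\phi(0,y)=\beta(y)$, as claimed.

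The crux is the asymmetric differentiability, and here the key observation is again that $U_+$ does not depend on $y$ and $U_-$ not on $x$: this converts the factors into genuine one-variable ODEs $L_{-_y}=L_-\widehat\omega_2$ and $L_{+_x}=L_+\widehat\omega_1$, whose right-hand sides are already known to be $C^0$ by Lemma~\ref{Uform}. Reading the $\lambda^{-1}$-coefficient of the first gives $(\ell_{-1})_y=-\frac{i}{2}e^{i\widehat\phi}$, a continuous function, so $\ell_{-1}$ and therefore $r=|\ell_{-1}|^2$ are $C^1$ in $y$; the $x$-dependence enters only through the merely-$C^0$ datum $\alpha$, giving $C^0$ in $x$. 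Symmetrically, $L_{+_x}=L_+\widehat\omega_1$ makes $\ell_{+_0}$, and hence $p=ie^{i\beta}\ell_{+_0}$, of class $C^1$ in $x$, while its $C^0$-in-$y$ regularity comes from $\beta$ together with the joint $C^1$-in-$(x,y)$ continuity of the Birkhoff factors. I expect this step to be the main obstacle: the remaining manipulations are routine loop-group bookkeeping, but the gain of one derivative in the transverse direction relies essentially on the decoupling $U_{+_y}=0=U_{-_x}$, which is exactly what lets the low regularity of $\alpha,\beta$ be confined to a single variable at a time.
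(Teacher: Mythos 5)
Your proposal follows essentially the same route as the paper: the same two expressions $\widehat U^{-1}\widehat U_x=L_+^{-1}L_{+_x}=L_-^{-1}\eta_+L_-+L_-^{-1}L_{-_x}$ (and its $y$-counterpart), the same $\lambda$-power bookkeeping to identify $q=ie^{-i\alpha}$ and $p=ie^{i\beta}\ell_{+_0}$ with $|\ell_{+_0}|=1$, the same use of uniqueness of the Birkhoff splitting at $y=0$ and $x=0$ to get $L_-(x,0)=I$, $L_+(0,y)=I$ and hence the boundary data. Your derivation of the asymmetric regularity via the one-variable ODEs for the Birkhoff factors is a slightly more explicit version of the paper's appeal to the joint $C^1$-dependence of $L_\pm$, but it is the same idea and the argument is correct.
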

\begin{cor}\label{coromega}
Writing $\widehat{\omega}=\widehat{U}^{-1} d\widehat{U} = \widehat{\omega}_1 dx + \widehat{\omega}_2 dy$ we have that 
$\widehat{\omega}_1$ is $C^0$ in $x$ and $C^1$ in $y$; and $\widehat{\omega}_2$ is $C^0$ in $y$ and $C^1$ in $x$.  In particular it follows that $\widehat{U}_{xy}$ and $\widehat{U}_{yx}$ exist.
\end{cor}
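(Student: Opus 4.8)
The corollary has two parts, and both follow fairly directly from Theorem \ref{omegas}; the plan is to read off the regularity carefully and then invoke the Maurer--Cartan equations. For the first assertion I would simply inspect the explicit matrix expressions produced in Theorem \ref{omegas}. The entries of $\widehat{\omega}_1$ are built from $r$ and from $e^{\pm i\alpha(x)}$. By Theorem \ref{omegas} the function $r$ is $C^0$ in $x$ and $C^1$ in $y$, while $\alpha$ depends on $x$ alone and is $C^0$; hence $e^{\pm i\alpha(x)}$ is $C^0$ in $x$ and, being independent of $y$, smooth in $y$. Therefore $\widehat{\omega}_1$ is $C^0$ in $x$ and $C^1$ in $y$. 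Symmetrically, the entries of $\widehat{\omega}_2$ are $e^{\pm i\widehat{\phi}(x,y)}$ with $\widehat{\phi}$ being $C^0$ in $y$ and $C^1$ in $x$, so $\widehat{\omega}_2$ is $C^0$ in $y$ and $C^1$ in $x$. No computation beyond substitution is required here.

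For the existence of the mixed partials I would start from the structure (Maurer--Cartan) equations rewritten as
\[\widehat{U}_x = \widehat{U}\,\widehat{\omega}_1, \qquad \widehat{U}_y = \widehat{U}\,\widehat{\omega}_2,\]
recalling from the earlier remark that $\widehat{U}$ is $C^1$ in $(x,y)$. To obtain $\widehat{U}_{xy}$ I differentiate the first equation in $y$: both factors on the right are differentiable in $y$ --- $\widehat{U}$ because it is $C^1$, and $\widehat{\omega}_1$ because it is $C^1$ in $y$ --- so the product rule yields
\[\widehat{U}_{xy} = \widehat{U}_y\,\widehat{\omega}_1 + \widehat{U}\,\partial_y\widehat{\omega}_1,\]
and each term on the right exists. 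Entirely symmetrically, differentiating $\widehat{U}_y=\widehat{U}\,\widehat{\omega}_2$ in $x$, which is legitimate because $\widehat{\omega}_2$ is $C^1$ in $x$, gives $\widehat{U}_{yx}=\widehat{U}_x\,\widehat{\omega}_2+\widehat{U}\,\partial_x\widehat{\omega}_2$, so $\widehat{U}_{yx}$ exists as well.

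The only point that needs watching, and the one I would emphasise, is the asymmetry of the regularity: $\widehat{\omega}_1$ is differentiable only in the $y$-direction and $\widehat{\omega}_2$ only in the $x$-direction. Consequently one may differentiate $\widehat{U}_x$ in $y$ and $\widehat{U}_y$ in $x$, but not, say, form $\widehat{U}_{xx}$, which would require $\widehat{\omega}_1$ to be differentiable in $x$ --- regularity that is simply not available. Because $\widehat{U}$ is merely $C^1$ rather than $C^2$, this matching of the two forms against the two admissible differentiation directions is exactly what makes the argument go through, and it is the main (if modest) subtlety. Note finally that the corollary claims only the \emph{existence} of $\widehat{U}_{xy}$ and $\widehat{U}_{yx}$, not their continuity or their equality, so no further estimate is needed at this stage; those stronger $C^{1M}$-type properties are left for the subsequent analysis.
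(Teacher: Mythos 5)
Your proposal is correct and matches the paper's treatment: the corollary is presented as an immediate consequence of Theorem \ref{omegas}, obtained exactly as you do by reading the regularity of $r$, $e^{\pm i\alpha(x)}$ and $e^{\pm i\widehat{\phi}(x,y)}$ off the explicit matrix entries, and then differentiating $\widehat{U}_x=\widehat{U}\widehat{\omega}_1$ in $y$ and $\widehat{U}_y=\widehat{U}\widehat{\omega}_2$ in $x$ via the product rule. Your closing remark --- that only existence, not equality, of the mixed partials is claimed here --- is precisely the point the paper takes up in the following subsection.
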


\subsection{The Integrability Condition for {\huge $\widehat{\omega}$}}
Since we started from $C^0$-potentials we obtain that $\widehat{U}$ is $C^1$ in $(x,y)$.  However, Corollary \ref{coromega} shows that we actually can compute the zero curvature condition (ZCC) for
\[\widehat{U}_x = \widehat{U} \widehat{\omega}_1,\quad \widehat{U}_y = \widehat{U} \widehat{\omega}_2,\]
\begin{equation}
Z = \widehat{\omega}_{1_y}-\widehat{\omega}_{2_x}+[\widehat{\omega}_2,\widehat{\omega}_1].
\end{equation}
Although we know $\widehat{U}_{xy}$ and $\widehat{U}_{yx}$ exist, we don't know apriori if $\widehat{U}_{xy}=\widehat{U}_{yx}$, hence we proceed in a different way.  Clearly, $\widehat{U}_x=\widehat{U} \widehat{\omega}_1$ implies
\begin{equation} \label{Ufirst}
\widehat{U}(x,y)=\int_0^x \widehat{U}(t,y) \widehat{\omega}_1(t,y) dt +\widehat{U}(0,y). 
\end{equation}
This is because every term can be differentiated for $x$.  Hence the right side satisfies $W_x = W \widehat{\omega}_1$.  For fixed $y$ we evaluate the initial condition at $0$ and (\ref{Ufirst}) follows.  Since $\widehat{U}(x,y)$ and $\widehat{U}(0,y)$ are $C^1$ in $y$ we obtain by differentiation and (\ref{birkform2})
\begin{equation} \label{Uo1}
\widehat{U}(x,y) \widehat{\omega}_2(x,y) = \partial_y \Big(\int_{0}^x \widehat{U}(t,y) \widehat{\omega}_1(t,y) dt \Big) + \widehat{U}(0,y) \widehat{\omega}_2(0,y).
\end{equation}
We need to work out the remaining differentiation.  Since the integrand is $C^1$ in $y$, we can interchange integration and differentiation and obtain:
\[\partial_y \Big( \int_{0}^x \widehat{U}(t,y) \widehat{\omega}_1(t,y) dt \Big) =
\int_{0}^x [\widehat{U}(t,y)\widehat{\omega}_2(t,y)\widehat{\omega}_1(t,y) + \widehat{U}(t,y) \widehat{\omega}_{1_y}(t,y)] dt.\]
Thus (\ref{Uo1}) reads
\begin{eqnarray*}
\widehat{U} \widehat{\omega}_2 &= &\partial_y \Big( \int_{0}^x \widehat{U} \widehat{\omega}_1 \Big) + \widehat{U}(0,y) \widehat{\omega}_2(0,y) \\ &=&  \Big( \int_{0}^x \underbrace{\widehat{U}_y}_{\widehat{U} \widehat{\omega}_2} \widehat{\omega}_1 + \widehat{U} \widehat{\omega}_{1_y} \Big) + \widehat{U}(0,y) \widehat{\omega}_2(0,y).
\end{eqnarray*}
From this expression we see that each of the terms occurring now are $C^1$ in $x$.  After differentiation for $x$ we obtain
\[  \underbrace{\widehat{U}_x}_{\widehat{U} \widehat{\omega}_1} \widehat{\omega}_2 + \widehat{U} \widehat{\omega}_{2_x} = \widehat{U}\widehat{\omega}_2 \widehat{\omega}_1 + \widehat{U} \widehat{\omega}_{1_y}. \]
We have proved the following 
\begin{thm}
\[\widehat{U}_{xy} = \widehat{U}_{yx}.\]
In other words
\begin{equation} \label{zcc}
\widehat{\omega}_{1_y}-\widehat{\omega}_{2_x}+[\widehat{\omega}_2,\widehat{\omega}_1]=0.
\end{equation}
\end{thm}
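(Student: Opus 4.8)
The plan is to prove the equality of the mixed partials $\widehat{U}_{xy} = \widehat{U}_{yx}$ as the primary statement and then simply read off the zero-curvature condition \eqref{zcc} as its matrix translation via $\widehat{U}_x = \widehat{U}\,\widehat{\omega}_1$ and $\widehat{U}_y = \widehat{U}\,\widehat{\omega}_2$. The genuine difficulty is that $\widehat{U}$ is only known to be $C^1$ in $(x,y)$: Corollary \ref{coromega} guarantees that both mixed partials \emph{exist}, but Schwarz's theorem does not apply, so their equality is not automatic and must be established by a roundabout argument. The guiding idea is to exploit the \emph{asymmetric} regularity recorded in Corollary \ref{coromega}---that $\widehat{\omega}_1$ is $C^1$ in $y$ while $\widehat{\omega}_2$ is $C^1$ in $x$---by integrating in the weak direction and differentiating only in the strong one, so that no derivative is ever taken in a direction where the relevant factor is merely $C^0$.

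First I would integrate the flow equation $\widehat{U}_x = \widehat{U}\,\widehat{\omega}_1$ in $x$ to obtain the integral representation
\[
\widehat{U}(x,y) = \int_0^x \widehat{U}(t,y)\,\widehat{\omega}_1(t,y)\,dt + \widehat{U}(0,y),
\]
which is justified because each term is differentiable in $x$ and the right-hand side satisfies the same first-order ODE with the correct initial value at $x=0$. This trades the problematic $x$-derivative for an $x$-integral, after which I can afford to differentiate in $y$. The regularity bookkeeping is decisive here: the integrand $\widehat{U}\,\widehat{\omega}_1$ is $C^1$ in $y$ (as $\widehat{U}$ is $C^1$ and $\widehat{\omega}_1$ is $C^1$ in $y$), so I may legitimately interchange $\partial_y$ with $\int_0^x dt$, while the boundary term is handled by $\widehat{U}_y(0,y) = \widehat{U}(0,y)\,\widehat{\omega}_2(0,y)$. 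Substituting $\widehat{U}_y = \widehat{U}\,\widehat{\omega}_2$ throughout then yields
\[
\widehat{U}\,\widehat{\omega}_2 = \int_0^x \big(\widehat{U}\,\widehat{\omega}_2\widehat{\omega}_1 + \widehat{U}\,\widehat{\omega}_{1_y}\big)\,dt + \widehat{U}(0,y)\,\widehat{\omega}_2(0,y).
\]

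Every term on the right of this last identity is now $C^1$ in $x$, so one final differentiation in $x$ is justified and, using $\widehat{U}_x = \widehat{U}\,\widehat{\omega}_1$ on the left, produces
\[
\widehat{U}\,\widehat{\omega}_1\widehat{\omega}_2 + \widehat{U}\,\widehat{\omega}_{2_x} = \widehat{U}\,\widehat{\omega}_2\widehat{\omega}_1 + \widehat{U}\,\widehat{\omega}_{1_y}.
\]
Cancelling the invertible factor $\widehat{U}$ and rearranging gives exactly \eqref{zcc}, and unwinding the substitutions shows $\widehat{U}_{xy} = \widehat{U}_{yx}$. The main obstacle---and precisely the reason a naive Schwarz argument fails---is the justification of the two differentiation-under-the-integral steps. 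I expect the crux to be verifying that the chosen order of operations never demands more smoothness than Corollary \ref{coromega} supplies: integrate in $x$ first, differentiate in $y$ where $\widehat{\omega}_1$ is $C^1$, and only at the very end differentiate in $x$ where $\widehat{\omega}_2$ is $C^1$. Once this ordering is respected, standard differentiation-under-the-integral-sign estimates apply routinely and the computation closes.
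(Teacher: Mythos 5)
Your proposal is correct and follows essentially the same route as the paper: integrate the $x$-flow equation, differentiate under the integral in $y$ (using that $\widehat{\omega}_1$ is $C^1$ in $y$), then differentiate the resulting identity in $x$ (using that $\widehat{\omega}_2$ is $C^1$ in $x$), and cancel $\widehat{U}$. The regularity bookkeeping you emphasize is exactly the point of the paper's argument.
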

\noindent Equation (\ref{zcc}) is the ZCC.  Next we want evaluate this equation by using the form of $\widehat{\omega}_1, \widehat{\omega}_2$ given in Theorem \ref{omegas}.  Equation (\ref{zcc}) thus reads

\[ \frac{i}{2}
\left(\begin{array}{cc} r_y & 0 \\ 0 & - r_y \end{array}\right)
+ 
\frac{i}{2}\lambda^{-1}
\left(\begin{array}{cc} 0 &  i \widehat{\phi}_x e^{i \widehat{\phi}(x,y)} \\ -i \widehat{\phi}_x e^{-i \widehat{\phi}(x,y)} & 0 \end{array}\right)
\]
\[
+ \frac{1}{4}
\left[\left(\begin{array}{cc} 0 &  \lambda^{-1} e^{i \widehat{\phi}(x,y)} \\ \lambda^{-1} e^{-i \widehat{\phi}(x,y)} & 0 \end{array}\right),
\left(\begin{array}{cc} r & \lambda e^{-i \alpha(x)} \\ \lambda e^{i \alpha(x)} & - r \end{array}\right)\right]
=0\]
By comparing the $(1,1)$-entries and the $(1,2)$-entries, we obtain respectively
\[ \left. \begin{array}{c}
r_y + \frac{1}{2i}(e^{i(\widehat{\phi}+\alpha)} - e^{-i(\widehat{\phi}+\alpha)}) =0, \\
r = -\widehat{\phi}_x. \end{array} \right.  \]
These two equations show that $\widehat{\phi}_x$ is differentiable for $y$ and we obtain
\begin{thm} The integrability condition is 
\[\widehat{\phi}_{xy} = \sin(\widehat{\phi} + \alpha).\] 
\end{thm}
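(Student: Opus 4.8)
The plan is to feed the explicit expressions for $\widehat\omega_1$ and $\widehat\omega_2$ from Theorem \ref{omegas} into the zero curvature condition (\ref{zcc}) and then read off scalar equations by comparing matrix entries. Each of the three terms $\widehat\omega_{1_y}$, $-\widehat\omega_{2_x}$, and $[\widehat\omega_2,\widehat\omega_1]$ has a transparent structure: because $\alpha$ depends on $x$ alone, $\widehat\omega_{1_y}$ is diagonal and contributes only $r_y$; the term $\widehat\omega_{2_x}$ is purely off-diagonal at order $\lambda^{-1}$; and the commutator, carrying an overall factor $\tfrac14$, contributes at orders $\lambda^0$ and $\lambda^{-1}$. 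Assembling these as in the display above, the $(1,1)$-entry yields $r_y + \tfrac{1}{2i}\big(e^{i(\widehat\phi+\alpha)}-e^{-i(\widehat\phi+\alpha)}\big)=0$, while the $(1,2)$-entry yields $r=-\widehat\phi_x$.

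The crux is then to combine these two relations correctly. The second, $r=-\widehat\phi_x$, identifies $\widehat\phi_x$ with the function $r$, which by Theorem \ref{omegas} is $C^1$ in $y$; hence $\widehat\phi_x$ is differentiable in $y$, the mixed partial $\widehat\phi_{xy}$ exists, and $\widehat\phi_{xy}=-r_y$. Substituting this into the first relation and invoking the Euler identity $\tfrac{1}{2i}(e^{i\theta}-e^{-i\theta})=\sin\theta$ with $\theta=\widehat\phi+\alpha$ turns it into $-\widehat\phi_{xy}+\sin(\widehat\phi+\alpha)=0$, which is exactly the claim.

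The hard part will not be the algebra but the existence of the mixed partial $\widehat\phi_{xy}$ in this low-regularity ($C^{1M}$) regime, where second derivatives cannot be taken for granted. The point to emphasize is that this existence is not assumed: it is produced by the entry comparison, through the identity $r=-\widehat\phi_x$ together with the separately established fact that $r$ is $C^1$ in $y$. This is why I would organize the derivation so as to isolate $r$ first and only then differentiate in $y$. The one spot where a sign error could creep in is the off-diagonal structure of $[\widehat\omega_2,\widehat\omega_1]$, so before concluding I would double-check that its $(1,1)$-entry produces $\tfrac{1}{2i}(e^{i(\widehat\phi+\alpha)}-e^{-i(\widehat\phi+\alpha)})$ with the sign consistent with $r=-\widehat\phi_x$, ensuring the final equation reads $+\sin(\widehat\phi+\alpha)$ rather than its negative.
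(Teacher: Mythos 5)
Your proposal is correct and follows the paper's own route exactly: substitute the explicit forms of $\widehat{\omega}_1,\widehat{\omega}_2$ from Theorem \ref{omegas} into the zero curvature condition, compare the $(1,1)$- and $(1,2)$-entries to obtain $r_y+\sin(\widehat{\phi}+\alpha)=0$ and $r=-\widehat{\phi}_x$, and use the fact that $r$ is $C^1$ in $y$ to justify the existence of $\widehat{\phi}_{xy}$ before combining. Your emphasis on the regularity point---that the existence of the mixed partial is a consequence of the entry comparison rather than an assumption---is precisely the remark the paper makes after deriving the two scalar equations.
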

\noindent Recall $\alpha=\alpha(x)$ is $C^0$, $\widehat{\phi}_{xy}$ exists, but $\widehat{\phi}_{y}$ may not.  Continuing to compute we have
\[\widehat{\phi}_x(x,y) = \int_{0}^y \sin(\widehat{\phi} + \alpha) dt + \widehat{\phi}_x(x,0), \]
and hence
\begin{equation} \label{phiintegral}
 \widehat{\phi}(x,y) =\int_0^x \Big( \int_0^y \sin(\widehat{\phi} + \alpha) dt \Big) ds 
+ \widehat{\phi}(x,0) + \widehat{\phi}(0,y) - \widehat{\phi}(0,0).
\end{equation}
Recall $r(x,0)=0$ and $r(x,y)=-\widehat{\phi}_x(x,y)$.  Thus $\widehat{\phi}_x(x,0)=0$ and therefore $\widehat{\phi}(x,0)=\widehat{\phi}(0,0)$.  Also recall $\beta(y)=\widehat{\phi}(0,y)$.  Thus equation (\ref{phiintegral}) reduces to
\[\widehat{\phi}(x,y)-\beta(y) = \int_0^x \Big( \int_0^y \sin(\widehat{\phi} + \alpha) dt \Big) ds.\] 
Setting $\widetilde{\phi}(x,y) = \widehat{\phi}(x,y)+ \alpha(x)$  yields
\[\widetilde{\phi}(x,y) -\beta(y) -\alpha(x) = \int_0^x \Big( \int_0^y \sin(\widetilde{\phi}) dt\Big) ds = \int_0^y \Big( \int_0^x \sin(\widetilde{\phi}) ds\Big) dt. \]
If we now let \[\check{\phi}=\widetilde{\phi}-\alpha -\beta, \]
then $\check{\phi}_{xy}$ exists, and $\check{\phi}_{yx}$ exists, and we have

\begin{thm} \label{weak} With $C^0$ input, $\eta=(\eta_-,\eta_+)$, our algorithm produces $\check{\phi}:D \stackrel{C^{1M}}{\longrightarrow} \mathbb{R}$ and $\widetilde{\phi}:D \stackrel{C^0}{\longrightarrow} \mathbb{R}$ with \[\check{\phi}_{xy} = \sin(\check{\phi}+\alpha+\beta) = \check{\phi}_{xy}\]
and
 \[\widetilde{\phi}_{xy} \stackrel{\mbox{{\normalsize weak}}}{=} \sin{\widetilde{\phi}}
\stackrel{\mbox{{\normalsize weak}}}{=} \widetilde{\phi}_{yx}\]
in the sense of distributions.
\end{thm}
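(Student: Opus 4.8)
The plan is to read off both assertions directly from the double integral identity established immediately above the statement, namely that
\[\check{\phi}(x,y) = \int_0^x \Big( \int_0^y \sin(\widetilde{\phi})\, dt \Big) ds = \int_0^y \Big( \int_0^x \sin(\widetilde{\phi})\, ds \Big) dt,\]
together with the already-noted fact that $\widetilde{\phi}$ is continuous on $D$. First I would confirm that $\check{\phi}$ is $C^1$: since $\sin(\widetilde{\phi})$ is continuous, the inner integral $\int_0^y \sin(\widetilde{\phi}(s,t))\, dt$ is continuous in $(s,y)$, so the fundamental theorem of calculus applied to the outer integral gives $\check{\phi}_x(x,y) = \int_0^y \sin(\widetilde{\phi}(x,t))\, dt$, which is continuous; using the second order of integration gives $\check{\phi}_y(x,y) = \int_0^x \sin(\widetilde{\phi}(s,y))\, ds$, also continuous.

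Next, differentiating $\check{\phi}_x$ in $y$ and $\check{\phi}_y$ in $x$ by one further application of the fundamental theorem of calculus (legitimate because the integrands are continuous) yields
\[\check{\phi}_{xy} = \sin(\widetilde{\phi}) = \check{\phi}_{yx},\]
and both are continuous because $\widetilde{\phi}$ is. Recalling $\widetilde{\phi} = \check{\phi} + \alpha + \beta$, this is precisely $\check{\phi}_{xy} = \sin(\check{\phi}+\alpha+\beta) = \check{\phi}_{yx}$ with continuous equal mixed partials, so $\check{\phi}$ is $C^{1M}$ and the first half is done.

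For the distributional statement I would transfer this to $\widetilde{\phi}$ through the additive splitting $\widetilde{\phi} = \check{\phi} + \alpha(x) + \beta(y)$. For any $\psi \in C_c^\infty(D)$ one writes $\langle \widetilde{\phi}_{xy}, \psi \rangle = \int_D \widetilde{\phi}\, \psi_{xy}$ and splits it into the three corresponding integrals. The terms in $\alpha$ and $\beta$ vanish, since $\int \psi_{xy}\, dy = 0$ and $\int \psi_{xy}\, dx = 0$ by compact support; the $\check{\phi}$ term equals $\int_D \check{\phi}_{xy}\, \psi = \int_D \sin(\widetilde{\phi})\, \psi$ after integrating by parts once in each variable, which is valid because $\check{\phi}$ is $C^{1M}$. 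Hence $\widetilde{\phi}_{xy} = \sin\widetilde{\phi}$ in the sense of distributions, and since distributional mixed partials commute automatically (as $\psi_{xy}=\psi_{yx}$ for smooth $\psi$), also $\widetilde{\phi}_{yx} = \widetilde{\phi}_{xy} = \sin\widetilde{\phi}$.

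The calculation is otherwise routine; the one point deserving genuine care is that $\widetilde{\phi}$ itself is merely $C^0$, so its mixed derivatives exist only distributionally. The conceptual heart of the argument is that the entire failure of $C^{1M}$-regularity is carried by the separated, purely $x$- and $y$-dependent terms $\alpha(x)$ and $\beta(y)$: subtracting them off produces the honestly $C^{1M}$ function $\check{\phi}$, and the same separation is exactly what makes their contribution to the mixed distributional derivative equal to zero, letting the smooth classical identity for $\check{\phi}$ pass to the weak identity for $\widetilde{\phi}$.
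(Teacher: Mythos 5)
Your proposal is correct and follows essentially the same route as the paper: the paper's proof consists precisely of deriving the double-integral identity $\check{\phi}(x,y)=\int_0^x\int_0^y\sin(\widetilde{\phi})\,dt\,ds=\int_0^y\int_0^x\sin(\widetilde{\phi})\,ds\,dt$ and reading off the mixed partials of $\check{\phi}$ from it, exactly as you do. Your explicit verification via the fundamental theorem of calculus and your test-function computation showing that the separated terms $\alpha(x)$, $\beta(y)$ contribute nothing to $\langle\widetilde{\phi}_{xy},\psi\rangle$ supply details the paper leaves implicit, but introduce no new idea.
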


\subsection{The $C^1$ Potential Case}
We would also like to compare our present results to \cite{DIS}.  Assume for this comparison that $\eta_-,\eta_+$ are actually $C^1$.  Then the first (unnumbered) equation in \cite{DIS} Section 2.5 is:
\[U_-^{-1} U_+ = V_+V_{-_0}^{-1} \cdot V_{-_0} T_-^{-1} V_{-_0}^{-1}.\]
Since $T_-$ and $V_{-_0} T_- V_{-_0}^{-1}$ start with $I$, we obtain
\[L_+ = V_+V_{-_0}^{-1},\;\; L_-=V_{-_0} T_-V_{-_0}^{-1}.\]
Hence the frame $U$ of \cite{DIS} relates to the frame $\widehat{U}$ of this paper as $\widehat{U} = U V_{-_0}^{-1}$.  Note that $\widehat{U}$ is $C^2$ in $(x,y)$ while $U$ is only $C^1$ in $(x,y)$.
In view of Theorem \ref{omegas} this corresponds precisely to the transition from $\phi$ in \cite{DIS} to our $\widehat{\phi} = \phi - \alpha$.  $\phi$ is the angle between the asymptotic lines.  (In Theorem \ref{weak} above we denote this angle by $\widetilde{\phi}$.)

\subsection{Frame to Immersion}
We have seen in the previous sections that $\widehat{U}(x,y,\lambda)$ is $C^1$ in $(x,y)$ and holomorphic in $\lambda \; \in \mathbb{C}^*$ (restricted to $\lambda>0$ for geometric purposes).  We set, using Sym's formula,
\begin{equation}  \label{f} 
f(x,y,\lambda) := \widehat{U}(x,y,\lambda)_t \widehat{U}(x,y,\lambda)^{-1}
\end{equation}
where $\lambda = e^t$ and

\begin{equation}  \label{n}
N(x,y,\lambda) := \widehat{U}(x,y,\lambda)  \left( \begin{array}{cc}  \frac{i}{2} & 0 \\  0 & -\frac{i}{2} \end{array} \right) \widehat{U}(x,y,\lambda)^{-1}.
\end{equation}

\begin{prop} \label{pot2surf} Let $f:D \longrightarrow \mathbb{R}^3$  be a map derived using a $C^0$ input potential $\eta$ and defined by Equation (\ref{f}).  Then
\mbox{}
\begin{enumerate}[label={\rm{\alph*)}},ref={\alph*)}]
\item  \label{pot2surfa}  We can interchange the differentiation for $t$ with the differentiation for $x$ and $y$ in equation (\ref{f}).  It follows that $f$ is $C^1$ in $(x,y)$ and holomorphic in $\lambda$.
\item  \label{pot2surfb} \mbox{} \vspace{-.3in}  \begin{eqnarray}
f_x &=& \widehat{U} \widehat{\omega}_{1_t} \widehat{U}^{-1} \\
&=& \frac{i}{2} \lambda \widehat{U}  \left( \begin{array}{cc}  0 & e^{-i \alpha(x)} \\  e^{i \alpha(x)} & 0 \end{array} \right) \widehat{U}^{-1}.
\end{eqnarray} 
\item  \label{pot2surfc} \mbox{} \vspace{-.3in} \begin{eqnarray}
f_y &=& \widehat{U} \widehat{\omega}_{2_t} \widehat{U}^{-1} \\
&=& \frac{i}{2} \lambda^{-1} \widehat{U}  \left( \begin{array}{cc}  0 & e^{i \widehat{\phi}(x,y)} \\  e^{-i \widehat{\phi}(x,y)} & 0 \end{array} \right) \widehat{U}^{-1}.
\end{eqnarray}
\item  \label{pot2surfd}  \mbox{} \vspace{-.3in}\begin{eqnarray}
f_x \times f_y &=& \sin(\widehat{\phi} + \alpha) N,\\
\Vert f_x \times f_y \Vert &=& |\sin(\widehat{\phi} + \alpha)|.
\end{eqnarray}
\end{enumerate}
\end{prop}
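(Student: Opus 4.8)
The plan is to prove the four parts in order, treating part \ref{pot2surfa} as the analytic heart of the proposition and parts \ref{pot2surfb}--\ref{pot2surfd} as computations that ride on top of it. The governing facts I will lean on are Theorem \ref{omegas}, which gives the explicit Laurent-in-$\lambda$ forms of $\widehat{\omega}_1 = \widehat{U}^{-1}\widehat{U}_x$ and $\widehat{\omega}_2 = \widehat{U}^{-1}\widehat{U}_y$, the relation $\lambda = e^t$ (so that $\partial_t = \lambda\,\partial_\lambda$), and the established regularity that $\widehat{U}$ is $C^1$ in $(x,y)$ and holomorphic in $\lambda \in \C^*$.

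For \ref{pot2surfa} the obstacle is that $\widehat{U}$ is only $C^1$ in $(x,y)$, so one cannot naively commute $\partial_t$ past $\partial_x$ or $\partial_y$. The idea is to convert $t$-differentiation into $\lambda$-differentiation and exploit holomorphicity. I would start from the identity $\widehat{U}_x = \widehat{U}\,\widehat{\omega}_1$, whose right-hand side is continuous in $(x,y)$ and holomorphic in $\lambda$ (here $\widehat{\omega}_1$ is a Laurent polynomial in $\lambda$ with $C^0$ coefficients). Writing $\widehat{U}_t = \lambda\,\partial_\lambda\widehat{U}$ and representing $\partial_\lambda\widehat{U}$ by Cauchy's integral formula $\partial_\lambda\widehat{U}(x,y,\lambda) = \frac{1}{2\pi i}\oint \widehat{U}(x,y,\zeta)(\zeta-\lambda)^{-2}\,d\zeta$, one may differentiate under the integral sign in $x$ and in $y$ — legitimate because $\widehat{U}$ is $C^1$ in $(x,y)$ uniformly on the contour — to conclude that $\partial_x$ and $\partial_y$ commute with $\partial_\lambda$ and that the resulting mixed partials are continuous. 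Equivalently, $\partial_t\widehat{U}_x = \widehat{U}_t\widehat{\omega}_1 + \widehat{U}(\widehat{\omega}_1)_t$ is continuous, so the refined Schwarz theorem yields $\widehat{U}_{tx} = \widehat{U}_{xt}$ and likewise $\widehat{U}_{ty} = \widehat{U}_{yt}$. This simultaneously shows $\widehat{U}_t$ has continuous $x$- and $y$-derivatives, hence $f = \widehat{U}_t\widehat{U}^{-1}$ is $C^1$ in $(x,y)$; holomorphicity of $f$ in $\lambda$ is immediate since $\partial_t = \lambda\,\partial_\lambda$ preserves it.

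For \ref{pot2surfb} and \ref{pot2surfc} I would differentiate Sym's formula $f = \widehat{U}_t\widehat{U}^{-1}$ using the interchange just proved. Since $(\widehat{U}^{-1})_x = -\widehat{\omega}_1\widehat{U}^{-1}$ and $\widehat{U}_{tx} = \partial_t(\widehat{U}\widehat{\omega}_1) = \widehat{U}_t\widehat{\omega}_1 + \widehat{U}(\widehat{\omega}_1)_t$, the two $\widehat{U}_t\widehat{\omega}_1\widehat{U}^{-1}$ terms cancel and leave $f_x = \widehat{U}\,(\widehat{\omega}_1)_t\,\widehat{U}^{-1}$; the identical computation in $y$ gives $f_y = \widehat{U}\,(\widehat{\omega}_2)_t\,\widehat{U}^{-1}$. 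It then remains only to apply $\partial_t = \lambda\,\partial_\lambda$ to the explicit matrices of Theorem \ref{omegas}: the diagonal $r$-term of $\widehat{\omega}_1$ is annihilated while the $\lambda$-term reproduces itself, and the $\lambda^{-1}$-term of $\widehat{\omega}_2$ returns with its sign reversed, which produces exactly the two stated expressions.

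For \ref{pot2surfd} I would use the standard identification of $\R^3$ with $\su(2)$ under which conjugation by $\widehat{U}\in SU(2)$ is a rotation (preserving both the cross product and the norm) and the cross product corresponds to the Lie bracket. From \ref{pot2surfb}--\ref{pot2surfc}, $f_x$ and $f_y$ are the $\widehat{U}$-conjugates of $\tfrac{i}{2}\lambda A$ and $\tfrac{i}{2}\lambda^{-1}B$, where $A = \left(\begin{smallmatrix} 0 & e^{-i\alpha}\\ e^{i\alpha} & 0\end{smallmatrix}\right)$ and $B = \left(\begin{smallmatrix} 0 & e^{i\widehat{\phi}}\\ e^{-i\widehat{\phi}} & 0\end{smallmatrix}\right)$, so $f_x \times f_y$ is the $\widehat{U}$-conjugate of $[\tfrac{i}{2}\lambda A,\tfrac{i}{2}\lambda^{-1}B]$ and the powers of $\lambda$ cancel. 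A short bracket computation gives $[A,B] = -2i\sin(\widehat{\phi}+\alpha)\,\mathrm{diag}(1,-1)$, whence $[\tfrac{i}{2}\lambda A,\tfrac{i}{2}\lambda^{-1}B] = \sin(\widehat{\phi}+\alpha)\cdot\tfrac{i}{2}\mathrm{diag}(1,-1)$; conjugating by $\widehat{U}$ recognizes the remaining factor as precisely $N$ from (\ref{n}), giving $f_x \times f_y = \sin(\widehat{\phi}+\alpha)\,N$, and since $N$ is a unit vector the norm identity follows at once. The decisive step throughout is \ref{pot2surfa}: everything downstream is formal once the $t$--$(x,y)$ interchange and the $C^1$-regularity of $f$ are secured from holomorphicity in $\lambda$.
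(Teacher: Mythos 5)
Your proof is correct, and for parts \ref{pot2surfb}--\ref{pot2surfd} it coincides with the paper's: the same cancellation giving $f_x = \widehat{U}\,\widehat{\omega}_{1_t}\,\widehat{U}^{-1}$ once $\partial_t$ is commuted past $\partial_x$, the same application of $\partial_t = \lambda\,\partial_\lambda$ to the Laurent forms of Theorem \ref{omegas}, and the same bracket computation identifying $f_x \times f_y$ with $\sin(\widehat{\phi}+\alpha)\,N$. Where you genuinely diverge is part \ref{pot2surfa}, the analytic heart. The paper justifies the interchange of $\partial_t$ with $\partial_x, \partial_y$ by observing that it holds for $U_+$ and $U_-$ separately (convergent power series in $\lambda$ whose coefficients are $C^1$ in $x$, resp.\ $y$, may be differentiated term by term) and then transferring the property to $\widehat{U}$ through the Birkhoff splitting (\ref{mainUsplit}); this tacitly requires the factors $L_\pm$ to inherit the interchange, a point left to the reader. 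You instead argue directly on $\widehat{U}$: writing $\partial_t = \lambda\,\partial_\lambda$, representing $\partial_\lambda \widehat{U}$ by Cauchy's integral formula, and differentiating under the integral sign in $x$ and $y$, which is legitimate because $\widehat{U}_x = \widehat{U}\widehat{\omega}_1$ and $\widehat{U}_y = \widehat{U}\widehat{\omega}_2$ are jointly continuous in the base variables and in the contour variable on a compact circle. What your route buys is self-containment --- it uses only the already-established facts that $\widehat{U}$ is $C^1$ in $(x,y)$ and holomorphic in $\lambda$, and bypasses the splitting entirely --- at the cost of a slightly heavier analytic setup; the paper's route is shorter but less explicit about why the factorization preserves the interchange. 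Both arguments are valid, and everything downstream of \ref{pot2surfa} is identical.
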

\begin{proof}
\ref{pot2surfa} The claim is true for $U_-$ and $U+$ since they are converging power series in $\lambda$.  The claim follows for $\widehat{U}$ by using equation (\ref{mainUsplit}), and hence for $f$.\\
\ref{pot2surfb} \&  \ref{pot2surfc} Using  \ref{pot2surfa} we obtain:
\[ (\widehat{U}_t \widehat{U}^{-1})_x = \widehat{U}_{xt} \widehat{U}^{-1} -\widehat{U}_t \widehat{U}^{-1} \widehat{U} \widehat{U}^{-1} = \widehat{U} \widehat{\omega}_{1_t} \widehat{U}^{-1}.\]
Therefore \ref{pot2surfb} (and similarly \ref{pot2surfc}) follow from Theorem \ref{omegas}.\\
\ref{pot2surfd} Since the cross product in $\mathbb{R}^3$ corresponds to the commutator in $su(2)$, we need to compute:
\begin{equation}\begin{split}
[\frac{i}{2} \lambda & \left(\begin{array}{cc} 0 & e^{-i \alpha(x)} \\ e^{i \alpha(x)} & 0 \end{array} \right),
\frac{i}{2} \lambda^{-1} \left(\begin{array}{cc} 0 & e^{i \widehat{\phi}(x,y)} \\ e^{-i \widehat{\phi}(x,y)} & 0 \end{array} \right) ]\\
&= -\frac{1}{4} \left(\begin{array}{cc} e^{-i(\widehat{\phi}+\alpha)} -e^{i(\widehat{\phi}+\alpha)} & 0 \\
0 & e^{i(\widehat{\phi}+\alpha)} -e^{-i(\widehat{\phi}+\alpha)}  \end{array} \right) \\
&= \frac{1}{2} \sin(\widehat{\phi}+\alpha) \left(\begin{array}{cc} i & 0 \\ 0 & -i \end{array} \right). \qedhere
\end{split} 
\end{equation}
\end{proof}

\noindent Recall from the introduction, that a parametrization of $f$  is called asymptotic if $f_x$ and $f_y$ always point in asymptotic directions (that is $\ell=0=n$) and it is called a Chebyshev parametrization if $<f_x,f_x>=1$ and $<f_y,f_y>=1$ at all points of $D$. \\
\noindent We are now able to prove the first half of our main theorem.
\begin{thm} \label{mixf} If the input potential $\eta$ is $C^0$, then 
\mbox{}
\begin{enumerate}[label={\rm{\alph*)}},ref={\alph*)}]
\item  \label{mixfa}The map $f$ of (\ref{f}) is $C^{1M}$, $<f_x,f_x> = \lambda^2$, $<f_y,f_y>=\lambda^{-2}$ and $<f_x,f_y> = \cos(\widehat{\phi} + \alpha)$.  In particular $f$ is weakly regular for all $\lambda$ and Chebychev if $\lambda=1$.
\item  \label{mixfb}  We have a ``generalized second fundamental form": $\ell=0, m=\sin(\widehat{\phi}+\alpha),n=0$.  In particular $f$ is asymptotic with $K=-1$ for all $\lambda$.
\item  \label{mixfc}$f_x = N \times N_x$, $f_y = -N \times N_y$.  In particular $N$ is weakly regular for all $\lambda$. 
\item  \label{mixfd}$N$ is $C^{1M}$ (since $\widehat{U}$ is) and Lorentz harmonic (see Definition \ref{defnN} below) with $N_{yx}=N_{xy} = \cos(\widehat{\phi} + \alpha) N$.  
\end{enumerate}
\end{thm}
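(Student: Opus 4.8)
The plan is to work throughout in the identification $\mathfrak{su}(2)\cong\mathbb{R}^3$ under which $\langle X,Y\rangle:=-2\trace(XY)$ is the Euclidean inner product and the commutator is the cross product (the convention already used in Proposition \ref{pot2surf}\ref{pot2surfd}); the key point is that conjugation $X\mapsto\widehat{U}X\widehat{U}^{-1}$ is then an orientation-preserving isometry, since trace is conjugation invariant. Abbreviate $A=\left(\begin{smallmatrix}0&e^{-i\alpha}\\ e^{i\alpha}&0\end{smallmatrix}\right)$ and $B=\left(\begin{smallmatrix}0&e^{i\widehat{\phi}}\\ e^{-i\widehat{\phi}}&0\end{smallmatrix}\right)$, so that Proposition \ref{pot2surf} reads $f_x=\tfrac{i}{2}\lambda\,\widehat{U}A\widehat{U}^{-1}$ and $f_y=\tfrac{i}{2}\lambda^{-1}\widehat{U}B\widehat{U}^{-1}$. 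Since $A^2=B^2=I$ and $\trace(AB)=2\cos(\widehat{\phi}+\alpha)$, the isometry property turns the computation of $E,F,G$ into three one-line traces giving $E=\lambda^2$, $G=\lambda^{-2}$, $F=\cos(\widehat{\phi}+\alpha)$; weak regularity ($E,G\neq0$) and the Chebyshev property at $\lambda=1$ follow at once, disposing of the metric part of \ref{mixfa}.

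The heart of \ref{mixfa} is that $f$ is genuinely $C^{1M}$, and I expect this to be the main obstacle, since with only a $C^0$ potential the second $y$-derivative of $\widehat{U}$ is not controlled a priori. Using $\widehat{U}_x=\widehat{U}\widehat{\omega}_1$, $\widehat{U}_y=\widehat{U}\widehat{\omega}_2$ and the covariant product rule $\partial_y(\widehat{U}X\widehat{U}^{-1})=\widehat{U}(X_y+[\widehat{\omega}_2,X])\widehat{U}^{-1}$ (valid with only $C^1$ regularity of $\widehat{U}$), I would write $f_{xy}=\widehat{U}(\partial_y\widehat{\omega}_{1_t}+[\widehat{\omega}_2,\widehat{\omega}_{1_t}])\widehat{U}^{-1}$ and $f_{yx}=\widehat{U}(\partial_x\widehat{\omega}_{2_t}+[\widehat{\omega}_1,\widehat{\omega}_{2_t}])\widehat{U}^{-1}$. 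Each right-hand side exists and is continuous because the entries of $\widehat{\omega}_1,\widehat{\omega}_2$ are differentiable in the needed variable and continuous by Corollary \ref{coromega} (in particular $\widehat{\phi}_x=-r$ is $C^0$). Equality of the two is then forced by differentiating the zero-curvature condition (\ref{zcc}) with respect to $t$ and interchanging $\partial_t$ with $\partial_x,\partial_y$ — legitimate because every entry is an explicit Laurent polynomial in $\lambda=e^t$, cf. Proposition \ref{pot2surf}\ref{pot2surfa} — which yields $\partial_y\widehat{\omega}_{1_t}-\partial_x\widehat{\omega}_{2_t}+[\widehat{\omega}_2,\widehat{\omega}_{1_t}]+[\widehat{\omega}_{2_t},\widehat{\omega}_1]=0$, exactly the identity making $f_{xy}=f_{yx}$. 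A by-product of the same bracket ($[B,A]=2i\sin(\widehat{\phi}+\alpha)\sigma_3$) is the compact formula $f_{xy}=\sin(\widehat{\phi}+\alpha)\,N$.

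For \ref{mixfb} and \ref{mixfc} I would compute the normal derivatives by the same conjugation rule, $N_x=\widehat{U}[\widehat{\omega}_1,\tfrac{i}{2}\sigma_3]\widehat{U}^{-1}=-\tfrac{\lambda}{2}\widehat{U}C\widehat{U}^{-1}$ with $C=\left(\begin{smallmatrix}0&-e^{-i\alpha}\\ e^{i\alpha}&0\end{smallmatrix}\right)$, and $N_y=\widehat{U}[\widehat{\omega}_2,\tfrac{i}{2}\sigma_3]\widehat{U}^{-1}=\tfrac{\lambda^{-1}}{2}\widehat{U}M\widehat{U}^{-1}$ with $M=\left(\begin{smallmatrix}0&-e^{i\widehat{\phi}}\\ e^{-i\widehat{\phi}}&0\end{smallmatrix}\right)$. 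The identities $f_x=N\times N_x$ and $f_y=-N\times N_y$ then reduce to the single bracket evaluations $[\tfrac{i}{2}\sigma_3,[\widehat{\omega}_1,\tfrac{i}{2}\sigma_3]]=\widehat{\omega}_{1_t}$ and $[\tfrac{i}{2}\sigma_3,[\widehat{\omega}_2,\tfrac{i}{2}\sigma_3]]=-\widehat{\omega}_{2_t}$, while $\langle N_x,N_x\rangle=\lambda^2$ and $\langle N_y,N_y\rangle=\lambda^{-2}$ give weak regularity of $N$. For the generalized second fundamental form, the traces $\trace(AC)$ and $\trace(BM)$ vanish, so $\ell=-\langle f_x,N_x\rangle=0$ and $n=-\langle f_y,N_y\rangle=0$, whereas $m=\langle f_{xy},N\rangle=\sin(\widehat{\phi}+\alpha)$ is immediate from the boxed formula $f_{xy}=\sin(\widehat{\phi}+\alpha)N$ together with $\langle N,N\rangle=1$. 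Hence $K=(\ell n-m^2)/(EG-F^2)=-\sin^2(\widehat{\phi}+\alpha)/\sin^2(\widehat{\phi}+\alpha)=-1$ wherever $\sin(\widehat{\phi}+\alpha)\neq0$, i.e.\ on the regular locus.

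Finally, for \ref{mixfd}, I would first note that $N=\widehat{U}\,\tfrac{i}{2}\sigma_3\,\widehat{U}^{-1}$ is $C^{1M}$ because $\widehat{U}$ is: the class $C^{1M}$ is closed under products and under inversion of invertible matrix functions, as one checks from the product rule for mixed partials together with the already-proved $\widehat{U}_{xy}=\widehat{U}_{yx}$. The Lorentz-harmonic equation is then one direct computation: since the factor $C$ in $N_x$ is independent of $y$, one gets $N_{xy}=-\tfrac{\lambda}{2}\widehat{U}[\widehat{\omega}_2,C]\widehat{U}^{-1}$, and $[B,C]=2\cos(\widehat{\phi}+\alpha)\sigma_3$ collapses this to $N_{xy}=\cos(\widehat{\phi}+\alpha)\,N$; the $C^{1M}$ property gives $N_{yx}=N_{xy}$, so $N$ is weakly-(Lorentz) harmonic in the sense of Definition \ref{defnN} with $h=\cos(\widehat{\phi}+\alpha)$.
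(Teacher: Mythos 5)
Your proposal is correct and takes essentially the same approach as the paper: the $C^{1M}$ property of $f$ is reduced, exactly as in the paper's proof, to the identity $\widehat{U}^{-1}(f_{xy}-f_{yx})\widehat{U}=\partial_t\bigl(\widehat{\omega}_{1_y}-\widehat{\omega}_{2_x}+[\widehat{\omega}_2,\widehat{\omega}_1]\bigr)=0$, and the remaining claims are the same direct trace and bracket computations in the $\su(2)\cong\mathbb{R}^3$ identification (the paper dispatches several of these as ``straightforward'' or ``similar direct calculations,'' which you have simply written out, including the minor variant of obtaining $m$ from $f_{xy}=\sin(\widehat{\phi}+\alpha)N$ rather than from $-\langle f_x,N_y\rangle$).
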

\begin{rem} \label{twoN}
By Proposition \ref{pot2surf} \ref{pot2surfd} our $N$ differs by at most a sign from $N_{standard} = \frac{f_x \times f_y}{\Vert f_x \times f_y \Vert}$ whenever $\sin(\widehat{\phi}+\alpha) \neq 0$.  Thus, the standard second fundamental from might have $m=-\sin(\widehat{\phi}- \alpha)$, but we would still have $\ell=n=0$ and $K=-1$.
\end{rem}
\begin{proof}
\ref{mixfa} A direct calculation gives
\[\widehat{U}^{-1} (f_{xy}-f_{yx}) \widehat{U} = (\widehat{\omega}_{1_y} -\widehat{\omega}_{2_x} + [\widehat{\omega}_2,\widehat{\omega}_1])_t =0.\]
Thus $f$ is $C^{1M}$.  The calculation of the first fundamental form is straightforward.\\
\ref{mixfb} 
Recall $<A,B> = -2 \trace AB$ and again let  $\hat{e}_3=\left( \begin{array}{cc}  \frac{i}{2} & 0 \\  0 & -\frac{i}{2} \end{array} \right)$.
\begin{eqnarray*}
\ell &=& -<f_x,N_x> = 
-2 \trace(
\frac{i}{2} \lambda 
\left(\begin{array}{cc}0&e^{-i \alpha} \\ e^{i \alpha} & 0 \end{array}\right) [\hat{e}_3,\widehat{\omega}_1]), \\
&=& i \lambda \trace(\left(\begin{array}{cc}0&e^{-i \alpha} \\ e^{i \alpha} & 0 \end{array}\right)
 (\frac{\lambda}{4})\left(\begin{array}{cc}0&2e^{-i \alpha} \\ -2e^{i \alpha} & 0 \end{array}\right)  ) = 0. \\
m &=& -<f_x, N_y>, \\
&=& -2 \trace( \frac{i}{2} \lambda \left(\begin{array}{cc}0&e^{-i \alpha} \\ e^{i \alpha} & 0 \end{array}\right)  [\hat{e}_3, \widehat{\omega}_2]) ,\\
&=&  -i \lambda \trace (\left(\begin{array}{cc}0&e^{-i \alpha} \\ e^{i \alpha} & 0 \end{array}\right) (\frac{\lambda^{-1}}{4})
\left(\begin{array}{cc}0&2 e^{i \widehat{\phi}} \\ -2e^{-i \widehat{\phi}} & 0 \end{array}\right) ) \\
&=& -\frac{i}{2} (-e^{-i(\widehat{\phi}+\alpha)} + e^{i(\widehat{\phi} + \alpha)}),\\
&=& \sin(\widehat{\phi}+\alpha).
\end{eqnarray*}
Similarly $m=-<f_y,N_x>$, $n=0$ and $K= -1$ follows.\\
\ref{mixfc} and \ref{mixfd} Similar direct calculations.
\end{proof}

\section{From $C^{1M}$ PS-Fronts to $C^0$ Potentials}\label{converse}
\subsection{Weak (Lorentz) Harmoniticity and PS-Fronts}

\begin{defn}  \label{defnN} Let $N:D \longrightarrow \mathbb{S}^2$ be a $C^{1M}$ weakly regular immersion.  Then $N$ is called weakly (Lorentz) harmonic if there exists a function $h:D \longrightarrow \mathbb{R}$ such that $N_{xy}(p) = N_{yx}(p) = h(p) N(p)$ for all $p \in D$. 
\end{defn}

\begin{thm} \label{psthm} Let $N:D \longrightarrow \mathbb{S}^2$ be weakly harmonic.  Then there exists a weakly-regular $C^{1M}$ $f:D  \longrightarrow \mathbb{R}^3$ such that $f_x=N \times N_x$, $f_y= -N \times N_y$.  Moreover, any such $f$ is immersed at $p \in D$ if and only if $N$ is immersed at $p$, and $f$ is weakly-regular at $p$ if and only if $N$ is weakly-regular at $p$.
\end{thm}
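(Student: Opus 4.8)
The plan is to build $f$ as a line integral of the $\mathbb{R}^3$-valued one-form $A\,dx + B\,dy$, where $A := N \times N_x$ and $B := -N \times N_y$, and then to read off the geometric equivalences from the identity $|a \times b|^2 = |a|^2|b|^2 - \langle a,b\rangle^2$ and the BAC-CAB rule $a \times (b \times c) = b\langle a,c\rangle - c\langle a,b\rangle$.

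First I would establish the integrability condition $A_y = B_x$, which is exactly what permits the form to be integrated. Because $N$ is $C^{1M}$ its mixed partial $N_{xy}=N_{yx}$ exists and is continuous, so $A_y = N_y \times N_x + N \times N_{xy}$ and $B_x = -N_x \times N_y - N \times N_{yx}$ both exist and are continuous. Antisymmetry of the cross product cancels the first terms, since $N_y \times N_x + N_x \times N_y = 0$, and weak harmonicity gives $N \times N_{xy} = N \times (hN) = 0 = N \times N_{yx}$; hence $A_y = B_x$. I would emphasize here that $A$ and $B$ are only $C^0$ and that the pure second partials $A_x, B_y$ (which would involve the possibly nonexistent $N_{xx}, N_{yy}$) play no role, so every step must avoid them.

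With integrability in hand I would define, fixing the basepoint at the origin (which lies in $D$),
\[ f(x,y) := \int_0^x A(t,0)\,dt + \int_0^y B(x,s)\,ds. \]
Differentiating directly gives $f_y = B$, continuous. Differentiating in $x$, the second integral contributes $\int_0^y B_x(x,s)\,ds = \int_0^y A_y(x,s)\,ds = A(x,y)-A(x,0)$ by the fundamental theorem of calculus, legitimate because $A(x,\cdot)$ has the continuous derivative $A_y$; the first integral contributes $A(x,0)$, so $f_x = A$, again continuous. This is precisely the style of bookkeeping used around (\ref{Ufirst}). Thus $f$ is $C^1$ with $f_x = A$, $f_y = B$, and moreover $f_{xy} = A_y = B_x = f_{yx}$ is continuous, so $f$ is $C^{1M}$; simple connectivity of $D$ together with $A_y = B_x$ makes the construction globally consistent and independent of the integration path. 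Any other such $f$ differs by an additive constant and so has the same first partials.

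Finally I would verify the equivalences for an arbitrary such $f$. Since $|N| \equiv 1$ forces $\langle N, N_x\rangle = \langle N, N_y\rangle = 0$, the cross-product norm identity gives $\langle f_x, f_x\rangle = |N_x|^2$ and $\langle f_y, f_y\rangle = |N_y|^2$; hence $f$ is weakly-regular at $p$ exactly when $N_x(p)\neq 0$ and $N_y(p)\neq 0$, i.e. exactly when $N$ is, which under the standing weak-regularity hypothesis shows $f$ is weakly-regular throughout. For immersion I would compute, using BAC-CAB and $\langle N \times N_x, N\rangle = 0$,
\[ f_x \times f_y = -(N\times N_x)\times(N\times N_y) = -\det(N, N_x, N_y)\,N. \]
Because $N_x, N_y$ are tangent to $\mathbb{S}^2$, hence orthogonal to $N$, the vector $N_x \times N_y$ is parallel to $N$, so $\det(N,N_x,N_y) = \langle N, N_x\times N_y\rangle$ vanishes precisely when $N_x \times N_y = 0$, i.e. precisely when $N$ fails to be immersed. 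Thus $f_x \times f_y \neq 0$ iff $N$ is immersed at $p$. I expect the only genuine obstacle to be the low-regularity bookkeeping in the integration step — producing a $C^{1M}$ potential while never invoking the pure second derivatives of $N$ — rather than the algebraic identities, which are routine.
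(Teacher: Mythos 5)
Your proposal is correct and follows essentially the same route as the paper's proof: verify that the one-form $(N\times N_x)\,dx-(N\times N_y)\,dy$ is closed using the $C^{1M}$ hypothesis together with weak harmonicity (which kills the $N\times N_{xy}$ and $N\times N_{yx}$ terms), integrate over the simply connected domain, and then read off weak-regularity and regularity from cross-product identities. You are more explicit than the paper in two places --- the low-regularity bookkeeping for the iterated integral, where the paper simply asserts ``so we can define a map $f$,'' and the identity $f_x\times f_y=-\det(N,N_x,N_y)\,N$, which replaces the paper's parallel-vectors contradiction argument --- but these are refinements of the same proof rather than a different approach.
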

\begin{proof} Note that the system
\begin{eqnarray*}
f_x & = & N \times N_x \\
f_y & = & - N \times N_y
\end{eqnarray*}
is solvable: 
\[f_{xy} = N_y \times N_x + N \times N_{xy} = N_y \times N_x \]
and 
\[f_{yx} = -N_x \times N_y - N \times N_{yx} = N_y \times N_x. \]
So we can define a map $f:D \longrightarrow \mathbb{R}^3$ which is $C^{1M}$.  If $N$ is weakly-regular at $p$ then $N_x(p) \neq \vec{0}, N_y(p) \neq \vec{0}, N \perp N_x, N \perp N_y$.  Then $f_x(p) \neq \vec{0}$ and $f_y(p) \neq \vec{0}$, so $f$ is weakly-regular at $p$.  If furthermore $N$ is regular at $p$, then $N_x \not\parallel N_y$ and hence $N || N_x \times N_y$.  We claim this implies $N \times N_x \not\parallel N \times N_y$.  If these nonzero vectors were parallel, then it would follow that $(N \times N_y) \times N_x$ and $(N \times N_y) \perp N_y$.  Hence $N_x || N_y$, a contradiction.  So $f_x \not\parallel f_y$, so $f$ is regular at $p$.
Conversely if $N$ is not regular at $p$ then $N_x(p) || N_y(p)$ which implies $f_x(p) || f_y(p)$ so $f$ is not regular at $p$.  If furthermore $N$ is not regular and not even weakly-regular at $p$ then either $N_x(p)=0$ or $N_y(p) =0$, which implies $f_x(p)=0$ or $f_y(0)=0$, so $f$ is not weakly-regular at $p$.
\end{proof}

%In the literature, in both the $C^\infty$ case (( Melko-Sterling REF )) and  the discrete case ((Bobenko-Pinkall REF)) such surfaces are called  we weakly regular ps-immersion.  However, it is not yet clear how one should define a weakly regular $C^1$ or $C^{1M}$ ps-immersion.  

\noindent Theorem \ref{psthm} allows us to make the following definition.  Compare with the definition of a front in \cite{SUY}.
\begin{defn}
Let $f:D \stackrel{C^{1M}}{\longrightarrow} \mathbb{R}^3$ be a weakly-regular immersion.  Then $f$ is called a weakly-regular $C^{1M}$ ps-front if there exists a weakly harmonic $N:D  \stackrel{C^{1M}}{\longrightarrow} \mathbb{S}^2$  such that $f_x=N \times N_x$, $f_y= -N \times N_y$. 
\end{defn}

\noindent Recall again from the introduction, that a parametrization of $f$  is called asymptotic if $f_x$ and $f_y$ always point in asymptotic directions (that is $\ell=0=n$) and it is called a Chebyshev parametrization if $<f_x,f_x>=1$ and $<f_y,f_y>=1$ at all points of $D$.
\noindent  We will show that any weakly-regular $C^{1M}$ ps-front $f$ is asymptotic (Equation (\ref{asymp})) and 
that by a $C^{1M}$ reparametrization any weakly-regular $C^{1M}$ ps-front can be made Chebyshev (Equation (\ref{cheby})), so without loss of generality we will assume this.  Finally, in this subsection, we will show that if $f$ is regular at $p$ then $K(p)=-1$.  In the previous section we proved Theorem \ref{mixf} which states that if $\eta$ is $C^0$, and $f_\eta$ is the surface derived from $\eta$ by formula (\ref{f}), then $f_\eta$ is a weakly regular $C^{1M}$ asymptotic Chebyshev ps-front.  The goal of this section is to prove the converse.
\begin{thm}\label{maintwo}
If $f$ is a weakly regular $C^{1M}$ asymptotic Chebyshev ps-front, then there exist a $C^0$ potential $\eta$ such that $f=f_\eta$.
\end{thm}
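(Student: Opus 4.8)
The plan is to reverse-engineer a $C^0$ potential from the boundary (characteristic) data of $f$, and then to show by a uniqueness argument that the forward algorithm applied to this potential returns $f$ itself. Since $f$ is a weakly-regular $C^{1M}$ asymptotic Chebyshev ps-front with weakly-harmonic Gauss map $N$, I would first define the asymptotic angle $\widetilde{\phi}$ by $\cos\widetilde{\phi}=<f_x,f_y>$ (using $<f_x,f_x>=<f_y,f_y>=1$), and record that, by the ps-front analysis preceding this theorem together with Theorem \ref{weak}, it satisfies the sine-Gordon equation $\widetilde{\phi}_{xy}=\sin\widetilde{\phi}$ weakly, with $\widetilde{\phi}$ continuous and with well-defined continuous restrictions $a(x):=\widetilde{\phi}(x,0)$ and $b(y):=\widetilde{\phi}(0,y)$ along the two characteristics.

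The second step is to choose the gauge that matches the normalizations of Section \ref{todaalg}. I set $\alpha(x):=\widetilde{\phi}(x,0)-\widetilde{\phi}(0,0)$ and $\beta(y):=\widetilde{\phi}(0,y)$, both of class $C^0$, and put $\widehat{\phi}:=\widetilde{\phi}-\alpha$, so that $\widehat{\phi}(x,0)$ is constant and $\widehat{\phi}(0,y)=\beta(y)$, exactly as in Theorem \ref{omegas}; here one checks from the ps-front structure that $\widehat{\phi}$ has the mixed regularity of Theorem \ref{omegas} ($C^1$ in $x$, the roughness being carried by $\alpha$). I then build the $C^0$ potential $\eta=(\eta_-(y)\,dy,\eta_+(x)\,dx)$ from $\alpha,\beta$ by the formulas of Section \ref{todaalg}, and run the forward algorithm to produce $f_\eta$ with its own angle $\widetilde{\phi}_\eta$. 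By Theorem \ref{weak} and the boundary normalizations, $\widetilde{\phi}_\eta$ solves the same weak sine-Gordon equation and, since $\alpha(0)=0$ and $\beta(0)=\widetilde{\phi}(0,0)$, carries the same characteristic data: $\widetilde{\phi}_\eta(x,0)=a(x)$ and $\widetilde{\phi}_\eta(0,y)=b(y)$.

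The third step is a uniqueness statement for this characteristic (Goursat) problem. Both $\widetilde{\phi}$ and $\widetilde{\phi}_\eta$ satisfy the integrated Volterra equation of the form (\ref{phiintegral}),
\[ \psi(x,y) = a(x)+b(y)-\widetilde{\phi}(0,0) + \int_0^x\!\!\int_0^y \sin\psi \,dt\,ds, \]
valid in the weak sense. Subtracting the two and using $|\sin u-\sin v|\le|u-v|$, a Gronwall/contraction estimate on a rectangle $[0,x]\times[0,y]\subset J\times J$ forces the difference to vanish, so $\widetilde{\phi}_\eta=\widetilde{\phi}$ on all of $D$. Consequently $f$ and $f_\eta$ share the same fundamental data: $E=G=1$, $F=\cos\widetilde{\phi}$, $\ell=n=0$ and $m=\sin\widetilde{\phi}$ (Theorem \ref{mixf}).

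Finally I would upgrade the equality of angles to $f=f_\eta$ up to a rigid motion, fixing the motion by the normalization $\widehat{U}(0,0,\lambda)=I$. Since a ps-front is recovered from its weakly-harmonic Gauss map by integrating $f_x=N\times N_x$, $f_y=-N\times N_y$ (Theorem \ref{psthm}), this reduces to identifying $N$ with $N_\eta$ up to a rotation: both are unit-speed Chebyshev maps into $\mathbb{S}^2$ solving the \emph{same} linear hyperbolic system $N_{xy}=\cos\widetilde{\phi}\,N$ (Theorem \ref{mixf}), whose characteristic curves $N(\cdot,0)$ and $N(0,\cdot)$ are pinned down, up to a single rotation, by $\alpha$ and $\beta$ (here using that the asymptotic curves have constant torsion, property \ref{BelEnn}). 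I expect this last step to be the main obstacle: at $C^{1M}$ regularity the second derivatives $N_{xx},f_{xx}$ need not exist, so neither the classical fundamental theorem of surfaces nor classical ODE/PDE uniqueness applies, and the matching of the characteristic curves and of the full Gauss maps must instead be carried out through integral-equation (Volterra/Gronwall) arguments parallel to those of Section \ref{todaalg}, leaning on the mixed regularity of Corollary \ref{coromega} and the weak formulation of Theorem \ref{weak}.
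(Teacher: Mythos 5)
Your strategy is genuinely different from the paper's: you propose to read off the candidate potential from the characteristic values of the angle function and then identify $f$ with $f_\eta$ by uniqueness for the Goursat problem plus a rigidity argument, whereas the paper constructs an extended frame directly from the given ps-front (a $C^{1M}$ lift $E$ of $N$ to $SU(2)$, gauged so that $E^{-1}E_y$ is off-diagonal, then rotated by an angle $\tau$), verifies that the resulting Maurer--Cartan form remains integrable after inserting the loop parameter, and extracts $\eta_\pm$ by Birkhoff-factorizing that extended frame. As written, your argument has two genuine gaps. First, the Volterra equation you want to run Gronwall on presupposes that the angle $\widetilde\phi$ of the \emph{given} surface satisfies $\widetilde\phi_{xy}=\sin\widetilde\phi$ in the integrated, pointwise sense, not merely distributionally; Theorem \ref{weak} gives this only for surfaces already produced by the forward algorithm, and the ``ps-front analysis preceding the theorem'' that you cite is itself part of the paper's proof of Theorem \ref{maintwo}. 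At $C^{1M}$ regularity one cannot differentiate $F=\langle f_x,f_y\rangle=\cos\widetilde\phi$ twice naively (that would require $f_{xx}$ and $f_{yy}$), and establishing that $(\omega-\tau)_x$ is differentiable in $y$ with $(\omega-\tau)_{xy}=\sin\omega$ for some $\tau=\tau(x)$ is exactly what the paper's normalized-frame computation and the reality argument forcing $\tau_y=0$ deliver; a weak identity is not enough for your contraction estimate.

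Second, and more seriously, the final step --- upgrading $\widetilde\phi_\eta=\widetilde\phi$ to $f=f_\eta$ --- is the actual content of the theorem, and you leave it as an acknowledged ``obstacle.'' Equality of $E,F,G,\ell,m,n$ does not yield congruence here: the Bonnet/fundamental-theorem argument needs the Gauss--Weingarten system, whose $x$-equation involves $f_{xx}$ and $N_{xx}$, which need not exist for a $C^{1M}$ immersion; likewise the characteristic curves $N(\cdot,0)$ and $N(0,\cdot)$ are not pinned down by $\alpha,\beta$ through any classical Frenet ODE, since the curvature of the asymptotic curve $f(\cdot,0)$ again involves a missing second derivative (the paper's Beltrami--Enneper remark is itself conditional on that curvature existing and being nonzero). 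Carrying this out ``by integral-equation arguments parallel to Section \ref{todaalg}'' essentially forces you to build a $C^1$ orthonormal frame along the surface satisfying a first-order system with $C^0$ coefficients and to compare it with $\widehat U$ --- which is precisely the paper's route. So while your reduction to a Goursat uniqueness statement is attractive and would work at higher regularity, at $C^{1M}$ the two steps you defer are exactly where the difficulty lives, and the proposal does not close them.
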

\noindent The proof of Theorem \ref{maintwo} will consist of the rest of this section.
\noindent First we prove that $f$ can be reparametrized by a $C^{1M}$-diffeomorphism to Chebyshev coordinates. We have that $<f_x,f_x>$ is differentiable in $y$ and 
\[\partial_y<f_x,f_x> = 2<f_{xy},f_x>=2<N_y \times N_x, N \times N_x> = 0, \]
since $N_y,N_x \perp N$ implies $N_y \times N_x \parallel N$, but $N \times N_x \perp N$.  Hence $<f_x,f_x> = E(x)$.  Similarly, $<f_y,f_y> = G(y)$.  Here $E$ and $G$ are continuous functions, so
\begin{equation} \label{cheby}
s(x) = \int_0^x \sqrt{<f_x(t,y),f_x(t,y)>}\; dt \; = \int_0^x \sqrt{E(t)}\; dt 
\end{equation}
is an invertible $C^1$-function of $x$.
%(Original) So we can change coordinates and obtain without loss of generality
%\begin{equation} \label{defomega}
%<f_x,f_x>=E \equiv 1, <f_y,f_y> =G \equiv 1, F = <f_x,f_y> = \cos \omega 
%\end{equation}
%for some $\omega$ (apriori $C^0$).  $f$ is regular if and only if $cos \omega \neq \pm 1$.
%(End Original)
So we can change coordinates, $(x,y) \rightarrow (x(s),y)$, and obtain without loss of generality $<f_x,f_x>=E \equiv 1$. 
Similarly we can change the y-coordinate and obtain $<f_y,f_y> =G \equiv 1.$  Let $F=<f_x,f_y>$ in these new coordinates.
Furthermore, by definition and since $<N_x, N>=0$ and $<N,N_y>=0$, it follows that in these coordinates we have
\begin{equation} 
<N_x,N_x>=1, <N_y,N_y> =1.
\end{equation}
We define 
\[f_x^\perp = N \times f_x \;\mbox{and}\; f_y^\perp = f_y \times N.\]
Note that $N_x = -f_x^\perp$ and $N_y=-f_y^\perp$. Moreover it is easy to see that 
${f_x,f_x^\perp, N}$ and ${f_y^\perp,f_y,N}$ are positively oriented orthonormal frames with \[\det(f_x,f_x^\perp,N)=\det(f_y^\perp,f_y,N)=1.\]
This allows us to unambiguously define the oriented angle $\omega$ from $f_x$ to $f_y$ in the oriented plane spanned by $f_x$ and $f_x^\perp$ as follows.
\begin{defn} \label{defomega} $\omega:D \longrightarrow \mathbb{R}$ is the unique function such that
\begin{enumerate}
\item $\omega(x,y) \!\!\!\mod \!2\pi = \angle( \f_x(x,y),\f_y(x,y)),$
\item $0 \leq \omega(0,0) < 2\pi,$
\item $\omega$ is $C^0$.
\end{enumerate}
\end{defn}
\noindent We immediately have that $F = <f_x,f_y> = \cos \omega$.  Moreover we can now clarify the distinction between our ``two normals" (both in asymptotic coordinates).  Notationally we let 
\[N_{front} :=N,\]
which is defined on all of $D$ and
\[N_{standard} := \frac{\f_x \times \f_y}{\Vert \f_x \times \f_y \Vert} \]
whenever $\Vert \f_x \times \f_y \Vert = |\sin \omega| \neq 0$ (and undefined otherwise).
We have $1=\det(f_x,f_x^\perp,N) = \det(f_x, \frac{f_y-\cos \omega f_x}{\sin \omega},N)$.  So $\sin \omega = \det(f_x,f_y,N)$ and $f_x \times f_y = \sin \omega \; N$.  In summary
\[N_{standard} = \frac{\sin \omega}{|\sin \omega|} N_{front}=\frac{\sin \omega}{|\sin \omega|} N,\]
whenever both sides are defined.  It is interesting that Toda's algorithm combined with Sym's formula selects $N_{front}$ not $N_{standard}$.

\noindent We can compute the second fundamental form (see Remark \ref{twoN}).
\begin{equation}\label{asymp}
\ell=-<f_x,N_x> = 0
\end{equation}{asymp}
since $f_x \perp N_x$ by definition of $f$.  Likewise $n=0$.
\begin{eqnarray*}
m &= &-<f_x,N_y> \\
& = &  -\partial_y <f_x,N> + <f_{xy},N>\\
&= & 0 + <f_{yx},N> \\
&=& \partial_x <f_y,N> - <f_y,N_x> \\
&=& -<f_y,N_x>
\end{eqnarray*}
So, $m=-<f_x,N_y>=-<f_y,N_x> = <f_{xy},N> = <f_{yx},N>$.
%Since $f_x \times f_y \;||\; N, ||N||=1, 
%||f_x \times f_y||= | \sin \omega |$, we have 
%\[f_x \times f_y = \pm \sin \omega \; N.\]
%In fact,
%\begin{equation*}
%1 =  \det(E_1, E_2, N) = \frac{1}{2} \frac{1}{\sin \omega} 
%\det(f_x+f_y, -f_x + f_y, N) 
%\end{equation*}
%\begin{equation*}
%= \frac{1}{2 \sin \omega}
%\det(f_x+f_y, 2 f_y, N) = \frac{1}{\sin \omega} 
%\det(f_x,f_y,N)
%\end{equation*}
%which implies
%\[f_x \times f_y = \sin \omega \; N.\]

\noindent Now we compute $f_x \times f_y$.  We use the Jacobi identity $A \times (B \times C) = -C \times (A \times B) - B \times (C \times A)$ and obtain
\begin{eqnarray*}
f_x \times f_y &=& (N \times N_x) \times (-N \times N_y) \\
&=& N_y \times ((N \times N_x) \times N) + N \times (N_y \times (N \times N_x)) \\
&=& (N \times (N \times N_x)) \times N_y + *  \\
&=& (N <N,N_x> - N_x <N,N>) \times N_y + * \\
&=& -N_x \times N_y +  * 
\end{eqnarray*}
so $f_x \times f_y = - N_x \times N_y \; + *$, where $* =  N \times (N_y \times (N \times N_x))$ is 
perpendicular to $N$ and $-N_x \times N_y$ is parallel to $N$.
But $f_x \times f_y \; ||\; N$, so
\[f_x \times f_y = -N_x \times N_y \; \mbox{and} \;  N_x \times N_y =  -\sin \omega \; N \; \mbox{follows}.\]
From this we can compute $m$ in terms of $\omega$.
\begin{eqnarray*}
m &=&-<f_x,N_y> \\
&=&-\det(N,N_x,N_y) \\
&=&-\det(N_x,N_y,N) \\
&=&-<N_x \times N_y, N> \\
&=& \sin \omega.
\end{eqnarray*}
\noindent Let $N_x = \alpha f_x + \beta f_y$ and $N_y = \gamma f_x + \delta f_y$.  So, taking inner products against $f_x$ and $f_y$, we have
\[\left( \begin{array}{cc} 0 & -m \\ -m & 0 \end{array} \right) =
\left(\begin{array}{cc} 1 & F \\ F & 1  \end{array} \right)
\left(\begin{array}{cc} \alpha & \gamma \\ \beta & \delta  \end{array} \right). \]
Computing $\left(\begin{array}{cc} 1 & F \\ F & 1  \end{array} \right)^{-1}\left( \begin{array}{cc} 0 & -m \\ -m & 0 \end{array} \right)$ and using $m=\sin \omega$ and $F=\cos \omega$
 gives (at regular points) $\alpha = \delta = \frac{mF}{1-F^2} =
 \frac{\cos \omega}{\sin \omega}$ and $\beta = \gamma = 
 \frac{-m}{1-F^2}=-\frac{1}{\sin \omega}$.  In particular
 \[ \det \left(\begin{array}{cc} \alpha & \gamma \\ 
 \beta & \delta \end{array} \right)  = \alpha^2 - \beta^2 = -1.\]
 Hence $K$ $=-1$ (at regular points).
 
\begin{rem}
One can also determine $K$ by the spherical image, since $N$ is $C^1$.
\end{rem}
\noindent Finally we compute $h$ in the definition of the harmonicity of $N$.  Recall $N_{xy}(p) = N_{yx}(p) = h(p) N(p)$.  First note $N \times f_x = N \times (N \times N_x) = -N_x$.  This implies
\[ -N_{xy} = N_y \times f_x + N \times f_{xy}. \]
The second term is $0$ since $f_{xy} = N_y \times N_x$ is parallel to $N$.
So (at regular points)

\begin{eqnarray*}
N_{xy} &=& -(\gamma f_x + \delta f_y) \times f_x \\
&=&  \delta f_x \times f_y \\
&=&  \frac{\cos \omega}{\sin \omega} \sin \omega \; N \\
&=& \cos \omega \; N.
\end{eqnarray*}

\begin{rem}
We show that  $N_{xy} = 0$ on some open set contradicts weak-regularity.  
$N_{xy} =  0$ implies $\cos(\omega)=0$ which without loss of generality 
implies $\omega \equiv \pi/2$.  Moreover, it follows that $f_x = N_y$ and $f_y=N_x$ .  In particular we 
have $N_{xy}=0$ implies $N_x(x,y) = f_y(x,y)=f_y(x)$.  Hence  $f=f_y(x)*y+C(x)$.  Similarly $N_{yx}=0$ implies
$N_y(x,y) = f_x(x,y)=f_x(y)$.  Hence  $f=f_x(y)*x + D(y)$.  So $f$ is linear in both $x$ and $y$.  Say $f=Ax+By+C$ for 
vectors $A,B,C$ , then $A \neq 0$ and $B \neq 0$ (since $f_x \neq 0$ and $f_y \neq 0$ by weak-regularity).  So $f$ is a
plane and $N$ is constant.  But then $N_x=f_y=0$ (and  $N_y=f_x=0$), which is a contradiction.  Thus $N_{xy}=0$ on some open set cannot
occur.  
\end{rem}
\begin{rem}
Note that $c(x):=f(x,0)$ is a curve parametrized by arclength with tangent $\vec{t} = f_x(x,0)$, normal (assuming the curvature of $c(x) \neq 0$) $\vec{n} = \pm f_x^\perp(x,0)$, and binormal $N=\vec{b}=\vec{t} \times \vec{n}$.  Then the Serret-Frenet formulas give $-dN(\vec{t}(x)) = -\vec{b}'(x) = \tau(x) \vec{n}(x)$ where $\tau(x)$ is the torsion of the asymptotic curve $c(x)$.  Since $N_x(x,0) = -f_x^\perp(x,0)$ we have $\tau(x) = \pm 1$.  Thus the Beltrami-Enneper theorem, Item \ref{BelEnn} in the Introduction, continues to hold for our $C^{1M}$-immersions.
\end{rem} 

\subsection{Frames}
We begin by proving that any $N$ which is $C^{1M}$ lifts to a $C^{1M}$ orthonormal frame $E$.
\begin{lem}
Let $N:D \longrightarrow S^2$ be $C^{1M}$.  Let $\pi: SU(2) \longrightarrow S^2 \approx SU(2)/U(1)$ be the standard fiber bundle over $S^2$ and $\widetilde{N}:N^*SU(2) \longrightarrow SU(2)$ its pullback bundle over $N$.  Then there exists $E:D \longrightarrow SU(2)$ which is $C^{1M}$ and satisfies $N = \pi \circ E$.
\end{lem}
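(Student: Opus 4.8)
The plan is to lift $N$ by composing it \emph{locally} with smooth sections of the bundle $\pi$ and then gluing the local lifts together, with the whole argument carried out inside the $C^{1M}$ category. Since $C^{1M}$ is a fragile, non--coordinate-invariant condition, the real content of the proof is not the existence of a lift (which is topological) but the verification that every operation used preserves $C^{1M}$. So the first thing I would record are two stability facts. If $g$ is $C^{1M}$ and $\Phi$ is $C^\infty$ on a neighborhood of the range of $g$, then $\Phi\circ g$ is $C^{1M}$: the chain rule gives $(\Phi\circ g)_x = D\Phi(g)\,g_x$, and one further differentiation gives $(\Phi\circ g)_{xy} = D^2\Phi(g)(g_y,g_x)+D\Phi(g)\,g_{xy}$, which equals $(\Phi\circ g)_{yx}$ because $D^2\Phi$ is symmetric and $g_{xy}=g_{yx}$; continuity is immediate. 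The identical bookkeeping with the product rule shows that finite sums and products of $C^{1M}$ maps (in particular matrix products, and inverses of $SU(2)$-valued maps) are again $C^{1M}$. These are exactly the operations needed below.

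Next I would set up the local lifts. Cover $S^2$ by open sets $\{V_i\}$ over which the smooth principal $U(1)$-bundle $\pi\colon SU(2)\to S^2$ admits $C^\infty$ sections $s_i\colon V_i\to SU(2)$, and refine $\{N^{-1}(V_i)\}$ to a locally finite cover $\{W_\alpha\}$ of $D$ by convex (hence simply connected) open sets with $N(W_\alpha)\subset V_{i(\alpha)}$. Put $E_\alpha:=s_{i(\alpha)}\circ N\colon W_\alpha\to SU(2)$. By stability under post-composition each $E_\alpha$ is $C^{1M}$ and satisfies $\pi\circ E_\alpha=N$. On an overlap both lifts project to the same point, so $g_{\alpha\beta}:=E_\alpha^{-1}E_\beta$ takes values in the stabilizer $U(1)$ of $\hat e_3=\left(\begin{smallmatrix}\frac{i}{2}&0\\0&-\frac{i}{2}\end{smallmatrix}\right)$ (the diagonal torus), and it is $C^{1M}$ by stability under products. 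Because $W_\alpha\cap W_\beta$ is simply connected, $g_{\alpha\beta}=e^{i\theta_{\alpha\beta}}$ for a $C^{1M}$ real function $\theta_{\alpha\beta}$, unique up to a constant in $2\pi\mathbb{Z}$, and these satisfy the multiplicative cocycle identity $g_{\alpha\beta}g_{\beta\gamma}=g_{\alpha\gamma}$.

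The final step is to glue, i.e.\ to produce $C^{1M}$ maps $c_\alpha\colon W_\alpha\to U(1)$ with $E_\alpha c_\alpha=E_\beta c_\beta$ on overlaps, equivalently $g_{\alpha\beta}=c_\alpha c_\beta^{-1}$; then the local lifts patch to a global $C^{1M}$ map $E$, and $\pi\circ E=N$ because right multiplication by the diagonal $U(1)$ commutes with $\hat e_3$ and so leaves $\pi(\cdot)=\mathrm{Ad}(\cdot)\hat e_3$ unchanged. Since $D$ is contractible, $H^2(D;\mathbb{Z})=0$, so the integer triple-overlap cocycle $\theta_{\alpha\beta}+\theta_{\beta\gamma}-\theta_{\alpha\gamma}\in 2\pi\mathbb{Z}$ can be trivialized and the $\theta_{\alpha\beta}$ adjusted to an honest additive $\mathbb{R}$-valued cocycle. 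Choosing a $C^\infty$ partition of unity $\{\rho_\gamma\}$ subordinate to $\{W_\alpha\}$, the standard formula
\[
\tau_\alpha:=\sum_\gamma \rho_\gamma\,\theta_{\alpha\gamma}
\]
(each summand extended by zero off $W_\gamma$) defines $C^{1M}$ functions with $\tau_\alpha-\tau_\beta=\theta_{\alpha\beta}\sum_\gamma\rho_\gamma=\theta_{\alpha\beta}$, and setting $c_\alpha:=e^{i\tau_\alpha}$ (again $C^{1M}$ by post-composition) gives $c_\alpha c_\beta^{-1}=e^{i(\tau_\alpha-\tau_\beta)}=g_{\alpha\beta}$, as required.

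I expect the main obstacle to be keeping everything inside $C^{1M}$ rather than the bundle theory itself: because $C^{1M}$ is not preserved under arbitrary changes of coordinates, one cannot simply quote ``a smooth section of the pullback bundle exists.'' The whole argument hinges on the two elementary stability lemmas of the first paragraph, which let every topological construction (local sections, transition functions, logarithms, partition-of-unity gluing) be run verbatim in the $C^{1M}$ setting; the cohomological/logarithm step is genuine but routine once contractibility of $D$ is invoked.
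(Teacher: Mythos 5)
Your proof is correct, and at bottom it rests on the same fact the paper uses: over the contractible domain $D$ the pulled-back circle bundle is trivial, so a global lift exists. The difference is in execution. The paper's proof is a three-line argument that simply posits ``a $C^{1M}$ global trivialization $\psi$ of $N^*SU(2)$'' and composes $\widetilde N\circ\psi\circ\iota$; it never explains why such a trivialization can be taken to be $C^{1M}$, which is the only non-routine point, since $C^{1M}$ is not a coordinate-invariant class and ``$C^{1M}$ bundle'' is not an off-the-shelf notion. Your version supplies exactly the missing content: the two stability facts (post-composition with $C^\infty$ maps and finite products preserve $C^{1M}$, by symmetry of $D^2\Phi$ together with $g_{xy}=g_{yx}$) guarantee that the local lifts $s_{i(\alpha)}\circ N$, the $U(1)$-valued transition functions $E_\alpha^{-1}E_\beta$, their logarithms on simply connected overlaps, and the partition-of-unity gluing all stay inside $C^{1M}$, and the cohomological step $H^2(D;\mathbb Z)=0$ is exactly the contractibility the paper invokes. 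So what your route buys is a self-contained, verifiable construction of the object the paper takes for granted; what the paper's route buys is brevity. One very minor remark: the partition-of-unity step needs the usual care that each summand $\rho_\gamma\theta_{\alpha\gamma}$, extended by zero, is $C^{1M}$ on all of $W_\alpha$ because $\operatorname{supp}\rho_\gamma\subset W_\gamma$ is closed — you gesture at this with ``extended by zero off $W_\gamma$,'' and it does go through.
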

\begin{proof}
Since $D=J \times J$ is contractible, the pullback bundle $N^*SU(2)$ over $D$ is trivial.  Let $\psi:D \times U(1) \longrightarrow N^*SU(2)$ be a $C^{1M}$ global trivialization of $N^*SU(2)$.  Let $s_0$ be any point in $U(1)$ and let $\iota$ be the corresponding inclusion map $\iota(x)=(x,s_0)$.  Then $E:D \longrightarrow SU(2)$ defined by $\widetilde{N} \circ \psi \circ \iota$ is $C^{1M}$ and satisfies $\pi \circ E = N$.
\end{proof}
\noindent Define $C^0$ coefficient matrices, $W$  and $V$ by 
\[E_y = E W \;(i.e.\; W=E^{-1}E_y), \]
\[E_x = E V \;(i.e.\; V=E^{-1}E_x). \]
Since, by definition, $E$ is $C^{1M}$, $E_{yx}$ exists and is continuous, which implies $W$ is $C^1$ in $x$ and $C^0$ in $y$.  Similarly  $E_{xy}$ exists and is continuous, which implies $V$ is $C^1$ in $y$ and $C^0$ in $x$.

\noindent The compatibility condition 
\[W_x - V_y = [W,V] \]
can be computed as usual.  But before evaluating this equation we normalize the frame.  Write
\[W = W_{\mathfrak{k}} + W_{\mathfrak{p}}\]
using the Cartan decomposition $\mathfrak{k} + \mathfrak{p} = \mathfrak{g}$ of $so(3)$ relative to the involution $X \mapsto AXA^{-1}$ where $A=diag(-1,-1,1)$.  Then solve 
\[K^{-1} K_y = -W_{\mathfrak{k}}\]
as a function of $y$ with parameter $x$.  This gives a $K$ which is $C^1$ in $y$.  Moreover, since $W$ and hence $W_{\mathfrak{k}}$ is $C^1$ in $x$ we have that $K$ is $C^1$ in $x$.  So $K$ is $C^1$ in $(x,y)$ and we may replace $E$ with $EK$.  In summary, without loss of generality we may assume $E$ is $C^1$ and $W_{\mathfrak{k}} = 0$.
\begin{rem}
Keeping this normalization one can only gauge $E$ by some diagonal block gauge $K_0(x)$ commuting with $A$ which is $C^1$ in $x$.   (The notation is meant to indicate that $K_0$ is independent of $y$.)
\end{rem}
\noindent We have 
\begin{equation} \label{Enormal}
E^{-1} E_y = \left(\begin{array}{cc} 0 &p \\ -\bar{p} & 0 \end{array}\right) = W,
\end{equation}
\begin{equation}
E^{-1} E_x = \left(\begin{array}{cc} i r & q \\ -\bar{q} & -i r \end{array}\right) = V.
\end{equation}
Now the compatibility condition $W_x-V_y=[W,V]$ yields
\[\left(\begin{array}{cc}0 & p_x \\ -\bar{p}_x & 0 \end{array}\right) -
\left(\begin{array}{cc}i r_y & q_y \\ -\bar{q}_y & -i r_y \end{array}\right) =
\left(\begin{array}{cc}-p\bar{q} + q\bar{p} & -2 i p r \\
-2\bar{p} i r & -\bar{p} q + \bar{q}p \end{array}\right), \]
and hence
\begin{eqnarray*}
i r_y &=& p \bar{q} - q \bar{p}, \\
p_x - q_y &=& -2i p r .
\end{eqnarray*}
To understand what this means we go back to the $3 \times 3$ - picture.
Let $\theta = \frac{\omega}{2}$, $\omega$ as in Definition (\ref{defomega}).  We begin by following \cite{DIS}.
\[e_1 = \frac{1}{2 \cos \theta}(f_x + f_y), e_2 = \frac{1}{2 \sin \theta}(f_y-f_x), e_3=N,\]
\[\widetilde{F} = (e_1,e_2,N),\]
Note: We are using $N$ here (not $N_{standard}$) and $\widetilde{F}$ is well-defined even when $\cos \theta=0$ or $\sin \theta =0$.  We have $e_1 = \cos \theta f_x + \sin \theta f_x^\perp$, $e_2 = \cos(\theta +\frac{\pi}{2}) f_x + \sin(\theta + \frac{\pi}{2}) f_x^\perp$, and  $\det \widetilde{F} = \det(e_1,e_2,N)=1$.  In \cite{DIS} the next step is
\begin{eqnarray*}
F &=& \widetilde{F} \left(\begin{array}{ccc} \cos \theta & \sin \theta & 0 \\
-\sin \theta & \cos \theta & 0 \\
0 & 0 & 1 \end{array}\right), \\
&=& (\cos \theta e_1 - \sin \theta e_2, \sin \theta e_1 + \cos \theta e_2, N), \\
&=& (\frac{1}{2}(f_x+f_y)-\frac{1}{2}(f_y-f_x), 
\frac{1}{2}\frac{\sin \theta}{\cos \theta}(f_x+f_y) + 
\frac{1}{2}\frac{\cos \theta}{\sin \theta}(f_y-f_x), N), \\
&=& (f_x, \frac{1}{2} \frac{\sin^2 \theta - \cos^2 \theta}{\cos \theta \sin \theta} f_x +
\frac{1}{2}  \frac{\sin^2 \theta + \cos^2 \theta}{\cos \theta \sin \theta} f_y, N), \\
&=& (f_x, -\frac{\cos 2\theta}{\sin 2\theta}f_x+\frac{1}{\sin 2\theta}f_y,N).
\end{eqnarray*}
Note again that $F$ is well-defined even when $\sin 2\theta = 0$, with $F=(f_x,f_x^\perp,N)$ and $\det(f_x,f_x^\perp,N)=1$.

\noindent We have introduced five different frames, all along our asymptotic coordinates.  Let's review their properties.
\begin{enumerate}
\item $(f_x, f_y, N)$.  This is the asymtotic line frame along the asymptotic coordinates.  It is $C^{1M}$, but is not a frame if $f_x$ and $f_y$ are  collinear.  It is positively oriented only half the time, and is rarely orthonormal.
\item $\widehat{F}= (e_1,e_2,N)$.  This is the curvature line frame along asymptotic coordinates.  It is apriori only $C^0$, but is always defined, positively oriented, and orthonormal.
\item $F=(f_x,f_x^\perp,N)$.  This is the frame used in \cite{DIS}.  It is apriori only $C^0$, but is always defined, positively oriented, and orthonormal.
\item $E = (E_1, E_2, N)$ (unnormalized).  This is the frame constructed from $N$ at the beginning of this section.  It is $C^{1M}$, always defined, positively oriented, and orthonormal.
\item $E = (E_1,E_2,N)$ (normalized).  This is constructed from the unnormalized $E$ by a gauge $K$ as above and it is the version used below.  It is only $C^1$, but still is always defined, positively oriented, and orthonormal.
\end{enumerate}
\subsection{Finding the Input Potential}
Because the nice $C^1$ normalized coordinate frame $E$ does not work, we rotate this frame in the tangent plane.
\[F=E R\]
where $E$ is $C^1$, but $F$ is perhaps only $C^0$ and
\[R=\left(\begin{array}{ccc} \cos \tau & \sin \tau &0 \\
-\sin \tau & \cos \tau & 0 \\
0 & 0 & 1 \end{array} \right).\]
So
\begin{eqnarray}
\widehat{E} = E & = & F R^{-1} \nonumber \\
& =  & (\cos \tau f_x-\frac{\cos 2\theta}{\sin 2\theta} \sin \tau f_x + 
 \frac{\sin \tau}{\sin 2\theta} f_y, \nonumber \\
 &&\hspace{1in} -\sin \tau f_x - \frac{\cos \tau \cos 2\theta}{\sin 2\theta}f_x +
\frac{\cos \tau}{\sin 2\theta} f_y,N), \nonumber \\
& =  & (\frac{\sin(2\theta-\tau)}{\sin 2\theta}  f_x + 
 \frac{\sin \tau}{\sin 2\theta} f_y, \nonumber \\
 &&\hspace{1in} -\frac{\cos(2\theta-\tau)}{\sin 2\theta}f_x +
\frac{\cos \tau}{\sin 2\theta} f_y,N). \label{E123}
\end{eqnarray}
%So far $\tau$ still has some freedom.  Given some $\tau_0$, any $\tau = \tau_0 + \hat{\tau}$ will be fine as %long as $\hat{\tau}$ is $C^1$.
We use the $\widehat{E}$ notation to emphasize the $3 \times 3$ setting.  In the $3 \times 3$ setting the normalization (\ref{Enormal}) translates into
\[\widehat{E}_y = \widehat{E} \left(\begin{array}{ccc} 0 & 0 & -q \\
0 & 0 & -p \\
q & p &0 \end{array} \right).\]
%This is possible since in the $\widehat{E}$ used so far we have (let $b= \left(\begin{array}{c} -q \\ -p %\end{array}\right)$)
%\[\widehat{E}_y = \widehat{E} \left(\begin{array}{cc} A & b \\
%-b^t & 0  \end{array} \right).\]
%Here $A$ is $C^1$ in $x$ and $C^0$ in $y$.  So $L^{-1}L_y = A$ is solvable with $L$ $C^1$ in $x$ and %$y$.  Note also that $b$ is $C^1$ in $x$, $C^0$ in $y$.
With this normalization we have
\[\widehat{E}^{-1}\widehat{E}_y = W = \left(\begin{array}{ccc} 0 & 0 & -q \\
0 & 0 & -p \\
q & p &0 \end{array} \right),\]
\[\widehat{E}^{-1}\widehat{E}_x = V =  \left(\begin{array}{ccc} 0 & c & -a \\
-c & 0 & -b \\
a & b &0 \end{array} \right).\]
Let $\check{e}_1=\left(\begin{array}{c} 1 \\ 0 \\ 0 \end{array}\right), 
\check{e}_2=\left(\begin{array}{c} 0 \\ 1 \\ 0 \end{array}\right), 
\check{e}_3=\left(\begin{array}{c} 0 \\ 0 \\ 1 \end{array}\right)$ and
$\widehat{E}_1 = \widehat{E} \check{e}_1$, $\widehat{E}_2 = \widehat{E} \check{e}_2$, $\widehat{E}_3 = \widehat{E} \check{e}_3$.
Then $\widehat{E}=(\widehat{E}_1,\widehat{E}_2,\widehat{E}_3=N)$.  On the one hand
\[<\widehat{E}_{y} \check{e}_1, N> = <\widehat{E}W\check{e}_1,N> = <\widehat{E}q\check{e}_3,N> = q<N,N> = q. \]
On the other hand
\[<\widehat{E}_{y} \check{e}_1, N> = <\widehat{E}_{1_y},N> = \partial_y \!\!<\widehat{E}_1,N> - <\widehat{E}_1,N_y> =-<\widehat{E}_1, \gamma f_x + \delta f_y>\!\!.\]
So
\begin{equation}
q = -<\widehat{E}_1, \gamma f_x + \delta f_y>.
\end{equation}
From (\ref{E123}) we know ($2\theta = \omega$)
\begin{eqnarray*}
\widehat{E}_1 & = & \frac{\sin(\omega-\tau)}{\sin \omega}  f_x + 
 \frac{\sin \tau}{\sin \omega} f_y,\\
 \widehat{E}_2 & = & -\frac{\cos(\omega-\tau)}{\sin \omega}  f_x + 
 \frac{\cos \tau}{\sin \omega} f_y.
\end{eqnarray*}
Recall $\gamma = -\frac{1}{\sin \omega}, \delta = \frac{\cos \omega}{\sin \omega}$.  So we have
\begin{eqnarray*}
-q &=& - \frac{1}{\sin \omega} (\frac{\sin(\omega-\tau)}{\sin \omega} 1 +
\frac{\sin \tau}{\sin \omega} \cos \omega) +
\frac{\cos \omega}{\sin \omega}
(\frac{\sin(\omega-\tau)}{\sin \omega}\cos \omega +
\frac{\sin \tau}{\sin \omega} 1) \\
&=& \frac{\sin(\omega-\tau)}{\sin^2 \omega}(-1 +\cos^2 \omega) 
= -\sin(\omega-\tau).
\end{eqnarray*}
Hence 
\[q = \sin(\omega - \tau).\]
Similarly we have 
\[p=-\cos(\omega-\tau),\;\; a=\sin \tau,\;\; b= \cos \tau.\]
To return to the $2 \times 2$-picture we recall from \cite{DIS}.
\begin{eqnarray*}
J: \check{e}_1 \rightarrow \frac{1}{2}\left(\begin{array}{cc}
0 & i \\ i & 0 \end{array}\right) & = & \hat{e}_1, \\
 \check{e}_2 \rightarrow \frac{1}{2}\left(\begin{array}{cc}
0 & -1 \\ 1 & 0 \end{array}\right) & = & \hat{e}_2, \\
 \check{e}_3 \rightarrow \frac{1}{2}\left(\begin{array}{cc}
i&0\\ 0&-i \end{array}\right) & = & \hat{e}_3.
\end{eqnarray*}
Then
\begin{eqnarray*}
\widehat{E}_1 &\rightarrow& S \hat{e}_1 S^{-1}, \\
\widehat{E}_2 &\rightarrow& S \hat{e}_2 S^{-1}, \\
\widehat{E}_3 &\rightarrow& S \hat{e}_3 S^{-1}. 
\end{eqnarray*}
For $X \in su(2)$:
\[J\widehat{E}J^{-1}(X) = SXS^{-1}.\]
Hence 
\[J \widehat{E}_x J^{-1}(X) = S[S^{-1}S_x,X] S^{-1}.\]
Which implies
\[J \widehat{E}^{-1} \widehat{E}_x J^{-1} (X) = [S^{-1}S_x,X].\]
Put $X=u \hat{e}_1 + v \hat{e}_2 + w \hat{e}_3$, and we have
\begin{eqnarray*}
J \widehat{E}^{-1}\widehat{E}_xJ^{-1}(X) &=& J(\widehat{E}^{-1}\widehat{E}_x) \left(\begin{array}{c}
u \\ v \\ w \end{array}\right) =
JV \left(\begin{array}{c}u \\ v \\w \end{array}\right) \\
&=&J \left(\begin{array}{c}cv-aw \\ -cu-bw \\ au+bv\end{array}\right) \\
&=& (cv-aw)\hat{e}_1-(cu+bv)\hat{e}_2+(au+bv)\hat{e}_3.
\end{eqnarray*}
Write $S^{-1}S_x =
 \left(\begin{array}{cc}i \epsilon & \rho \\ 
 -\bar{\rho} & -i \epsilon\end{array}\right)$ with $\epsilon(x,y) \in \mathbb{R}$, $\rho(x,y) \in \mathbb{C}$.
 Computing we have
 \begin{eqnarray*}
 [S^{-1}S_x, \hat{e}_1] & = & [ \left(\begin{array}{cc}i \epsilon & \rho \\ 
 -\bar{\rho} & -i \epsilon\end{array}\right), \frac{1}{2}\left(\begin{array}{cc}
0 & i \\ i & 0 \end{array}\right)] \\
&=& \frac{1}{2}\left(\begin{array}{cc}\rho i + i \bar{\rho} & -\epsilon - \epsilon \\
\epsilon + \epsilon & -\bar{\rho} i - \rho i \end{array}\right) =
\frac{1}{2}\left(\begin{array}{cc}i Re \rho & -2\epsilon \\
2\epsilon & -i Re \rho \end{array}\right) \\
&=& 2 Re \rho \hat{e}_3 + 2\epsilon \hat{e}_2 = -c\hat{e}_2+a\hat{e}_3.
 \end{eqnarray*}
 Which implies $c=-2\epsilon, a=2 Re \rho$.  Similarly $b=2 Im \rho$.  So
 \[S^{-1}S_x = \frac{1}{2}\left(\begin{array}{cc}-i c & a+ib \\
-(a-ib) & i c \end{array}\right) =  i \frac{i}{2}\left(\begin{array}{cc}c & e^{-i \tau} \\
e^{i \tau} & -c \end{array}\right).\]
Similarly if we write $S^{-1}S_y =
 \left(\begin{array}{cc}0 & \sigma \\ 
 -\bar{\sigma} &0\end{array}\right)$ with  $\sigma(x,y) \in \mathbb{C}$.  Then we compute that $q = 2 \, Re \,\sigma, p= 2\, Im \,\sigma$ and 
  \[S^{-1}S_y  =  \frac{i}{2}\left(\begin{array}{cc}0 & e^{i(\omega- \tau)} \\
e^{-i(\omega-\tau)} & 0 \end{array}\right).\]
Finally we evaluate the integrability condition
\[W_x-V_y = [W,V]\]
that is
\begin{eqnarray*}
\left(\begin{array}{cc} 0  & \sigma_x \\ -\bar{\sigma}_x & 0 \end{array}\right)  -
\left(\begin{array}{cc} i \epsilon_y & \rho_y \\ -\bar{\rho}_y & -i \epsilon_y \end{array}\right) & = &
[\left(\begin{array}{cc} 0  & \sigma \\ -\bar{\sigma} & 0 \end{array}\right), 
\left(\begin{array}{cc} i \epsilon & \rho \\ -\bar{\rho} & -i \epsilon \end{array}\right) ] ,
%\\
%&=&\left(\begin{array}{cc} -\sigma \bar{\rho}+\rho \bar{\sigma}  
%& -i \sigma \epsilon-i \epsilon \sigma \\ 
%-\bar{\sigma} i \epsilon - i \epsilon \bar{\sigma} &
%-\bar{\sigma} \rho + \sigma \bar{\rho} \end{array}\right),
\end{eqnarray*}
which yields
\begin{eqnarray*}
\left(\begin{array}{cc} 0  & -\frac{(\omega-\tau)_x}{2} e^{i(\omega-\tau)} \\ 
\frac{(\omega-\tau)_x}{2} e^{-i(\omega-\tau)} & 0 \end{array}\right)  -
\left(\begin{array}{cc}-\frac{i}{2}c_y & -\frac{\tau_y}{2} e^{-i\tau} \\
 \frac{\tau_y} e^{i \tau} & \frac{i}{2} c_y \end{array}\right) & = & \\
 \mbox{$[$} \frac{i}{2}\left(\begin{array}{cc} 0  & e^{i(\omega-\tau)} \\ 
e^{-i(\omega-\tau)} & 0 \end{array}\right), 
\frac{i}{2}\left(\begin{array}{cc} -c & e^{-i\tau} \\
e^{i \tau} & c \end{array}\right) ],  & &
\end{eqnarray*}
and a straightforward calculation gives
\begin{eqnarray*}
c_y & = & \sin \omega, \\
 (\omega-\tau)_x +  \tau_y e^{-i \omega}&=& c.
\end{eqnarray*}
Since $(\omega-\tau)_x$ and $c$ are real, the expression $\tau_y e^{-i \omega}$ is real.  Hence $\tau_y=0$.  Thus $\tau = \tau(x)$ and also $(\omega-\tau)_x = c$ which implies $(\omega-\tau)_x$ is $C^1$ in $y$ and 
\[(\omega-\tau)_{xy} = \sin \omega.\]
Replacing $\sigma$ by $\lambda^{-1} \sigma$ and $\rho$ by $\lambda \rho$ yields now the same integrability condition.  We have 
\[S^{-1} dS = \lambda^{-1} A dy + B + \lambda D dx\]
is integrable for all $\lambda > 0$.  Thus we have an extended frame $S=S(x,y,\lambda)$ which we can factor as
\[S=S_-L_+=S_+L_-.\]
As in \cite{DIS} Section 2.4 this yields the desired $C^0$ potentials $\eta_- =S_-^{-1} S_{-_y}$ and $\eta_+ = S_+^{-1} S_{+_x}$.  This concludes the proof of Theorem \ref{maintwo}.

\section{Hartman-Wintner Theory}\label{hwthy}
%\begin{thm} \label{hwthm} (Hartman-Wintner \cite{HW}) 
%Let $f_{orig}:D_{(u,v)} \stackrel{C^2}{\longrightarrow} \mathbb{R}^3$ be a regular immersion with $K=-1$.  Then there is a unique (up to orientations) $C^1$ reparametrization $T$ of $f_{orig}$ by asymptotic Chebyshev coordinates.  Furthermore, $f_{asyche}=f_{orig} \circ T$ is $C^{1M}$.  
%\end{thm}
%\begin{rem}
%Note, in \cite{HW} it is shown that this theorem is false if the condition $K=-1$ is weakened to $K<0$.
%\end{rem}
%\section{THEOREM 4.1 REPAIRED}
%\subsection{Original Bad Version}
%\begin{thm} \label{hwthm} (Hartman-Wintner \cite{HW}) 
%Let $f_{orig}:D_{(u,v)} \stackrel{C^2}{\longrightarrow} \mathbb{R}^3$ be a regular immersion with $K=-1$.  Then there is a unique (up to orientations) $C^1$ reparametrization $T$ of $f_{orig}$ by asymptotic Chebyshev coordinates.  Furthermore, $f_{asyche}=f_{orig} \circ T$ is $C^{1M}$.  
%\end{thm}
%\begin{rem}
%Note, in \cite{HW} it is shown that this theorem is false if the condition $K=-1$ is weakened to $K<0$.
%\end{rem}
%\subsection{New Version}
\begin{thm} \label{hwthm} (Hartman-Wintner \cite{HW}) 
Let $f_{orig}:D_{(u,v)} \stackrel{C^2}{\longrightarrow} \mathbb{R}^3$ be a regular immersion with $K=-1$.  Then, locally, there is a unique (up to orientations) $C^1$ reparametrization $T:\breve{D}_{(x,y)} \longrightarrow \breve{D}_{(u,v)}$ of $f_{orig}$ by asymptotic Chebyshev coordinates.  Furthermore, $f_{asyche}=f_{orig} \circ T$ is $C^{1M}$.  
\end{thm}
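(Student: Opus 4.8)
The plan is to establish the existence and regularity of asymptotic Chebyshev coordinates for a $C^2$ immersion with $K=-1$ by exploiting the differential structure of the asymptotic line field, and then to bootstrap the differentiability of $f_{asyche}$ using the zero-curvature/compatibility machinery from the earlier sections. I would proceed in three stages: first construct the asymptotic coordinates as a $C^1$ change of variables; second rescale to Chebyshev (arc-length) normalization preserving $C^1$; and third prove the $C^{1M}$ conclusion via the structure equations.

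\emph{Stage one (asymptotic directions and coordinates).} Since $f_{orig}$ is $C^2$ and $K<0$, the second fundamental form is $C^0$ and indefinite, so at each point there are two distinct asymptotic directions, i.e.\ two distinct real roots of the binary quadratic $\mathrm{II}(w,w)=0$ in $T_pD$. The coefficients of this quadratic are $C^0$ (they are $\ell,m,n$, which involve second derivatives of $f_{orig}$), so the two asymptotic line fields are only \emph{continuous} a priori, which is exactly the obstruction Hartman--Wintner overcame. The key device is to write each asymptotic direction field as the kernel of a $C^0$ one-form and invoke a Frobenius/Darboux-type integration result that produces $C^1$ integrating factors and hence $C^1$ coordinate functions $x,y$ whose level curves are the two families of asymptotic curves. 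I would cite \cite{HW} for precisely this step: a continuous direction field arising as asymptotic lines of a $C^2$ surface of negative curvature admits $C^1$ local first integrals. This yields a local $C^1$ diffeomorphism $T$ with $f_{orig}\circ T$ having both parameter curves asymptotic, so $\ell=n=0$ in the new coordinates and uniqueness up to swapping/reorienting the two families.

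\emph{Stage two (Chebyshev normalization).} With $\ell=n=0$ in asymptotic coordinates, the Codazzi equations force $\partial_y\langle f_x,f_x\rangle=0$ and $\partial_x\langle f_y,f_y\rangle=0$, exactly as computed in Section \ref{converse} following Equation (\ref{cheby}); hence $E=E(x)$ and $G=G(y)$ are positive continuous functions of a single variable. I would then reparametrize by the $C^1$ arc-length substitutions $s(x)=\int_0^x\sqrt{E}$ and $t(y)=\int_0^y\sqrt{G}$, which are $C^1$ invertible, giving $E\equiv G\equiv 1$ while keeping the composite reparametrization $C^1$ and leaving the asymptotic property intact.

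\emph{Stage three (the $C^{1M}$ upgrade), which I expect to be the main obstacle.} After stages one and two, $f_{asyche}$ is \emph{a priori} only $C^1$, since we have divided by continuous integrating factors and integrated continuous densities. To obtain the $C^{1M}$ conclusion I would show that the mixed partial $f_{xy}$ exists, is continuous, and equals $f_{yx}$. The cleanest route mirrors Theorem \ref{psthm} and the structure-equation argument in Section \ref{converse}: in asymptotic Chebyshev coordinates the Gauss map $N$ satisfies $N_{xy}=N_{yx}=\cos\omega\,N$ with $\omega$ the (continuous) angle between asymptotic lines, and $f_x=N\times N_x$, $f_y=-N\times N_y$, so that formally $f_{xy}=N_y\times N_x=f_{yx}$. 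The crux is to verify that these identities hold \emph{rigorously} at the low regularity in hand: one must show $N$ is itself $C^{1M}$ (so that $N_{xy},N_{yx}$ exist and are continuous and equal), and that the cross-product expressions for $f_x,f_y$ are legitimate for the reparametrized $f_{asyche}$. I would argue that the $C^2$ regularity of $f_{orig}$ makes $N_{orig}$ of class $C^1$, and that under the $C^1$ reparametrization $T$ the composed normal $N=N_{orig}\circ T$ inherits enough differentiability — specifically that the mixed partials exist and agree because they can be expressed through the single-variable integrating factors from stages one and two — so that Definition \ref{defnregs1} applies. The delicate point is controlling that no further differentiation of the merely-$C^0$ coordinate densities is needed to form $f_{xy}$; the asymptotic structure ($\ell=n=0$) is precisely what guarantees this, since it removes the problematic second-order terms and leaves $f_{xy}$ expressible via the continuous quantity $m=\sin\omega$ times the continuous normal, exactly as in the computation of $m$ following Equation (\ref{asymp}).
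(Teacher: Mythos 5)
The first thing to note is that the paper does not actually prove this statement: Theorem \ref{hwthm} is imported from Hartman--Wintner \cite{HW} and used as given (the paper's own work starts with Theorem \ref{betterhw}, whose proof already presupposes the $C^{1M}$ conclusion in order to write $f_{xy}=\sin\theta\, N$). Your stage one likewise delegates the genuinely hard analytic step --- producing $C^1$ first integrals of the merely continuous asymptotic direction fields of a $C^2$ surface with $K=-1$ --- to \cite{HW}, so to that extent you and the paper are doing the same thing, and that part is acceptable.

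The later stages, however, contain real gaps. In stage two you invoke the Codazzi equations to conclude $\partial_y\langle f_x,f_x\rangle=0$, but at that point $f_{asyche}=f_{orig}\circ T$ is only known to be $C^1$, so the second derivatives needed to state Codazzi are not yet available; the paper's analogous computation (following Equation (\ref{cheby})) obtains $\partial_y\langle f_x,f_x\rangle=2\langle N_y\times N_x,\,N\times N_x\rangle=0$ only by using the ps-front identities $f_x=N\times N_x$ and $f_{xy}=N_y\times N_x$, which in your plan are established only in stage three --- an ordering problem. More seriously, the central claim of stage three, that $N=N_{orig}\circ T$ is $C^{1M}$, is asserted rather than proved: $N_{orig}$ is $C^1$ and $T$ is $C^1$, so the composition is a priori only $C^1$ with no mixed second partials, and the sentence claiming the mixed partials ``can be expressed through the single-variable integrating factors'' is precisely the statement whose proof constitutes the Hartman--Wintner regularity theorem. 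Note Remark \ref{diffNback}: the change to asymptotic coordinates \emph{raises} the differentiability of $N$ from $C^1$ to $C^{1M}$, which cannot come from chain-rule bookkeeping; one needs an integral representation of the type $f(x,y)=f(x,0)+f(0,y)-f(0,0)+\int_0^x\int_0^y \sin\omega\; N\, dt\, ds$, i.e.\ one must exhibit the candidate mixed partial $\sin\omega\, N$ as a continuous function and then recover $f$ from it by integration. As written, your closing argument is circular: you need $f_{xy}$ to exist in order to identify it with $m\,N$, and you use that identification to argue that $f_{xy}$ exists and is continuous.
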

\noindent $\breve{D}$ denotes an appropriate small local domain about $(x,y)$.
\begin{rem}
Note, in \cite{HW} it is shown that this theorem is false if the condition $K=-1$ is weakened to $K<0.$
\end{rem}
\begin{rem}
\noindent Theorem \ref{hwthm} together with the results of the previous section imply that we can produce, locally, via Toda's algorithm, all immersions which are $C^2$ with $K=-1$ after some change of coordinates.
\end{rem}
\noindent As discussed in Section \ref{kimm} one can check that for such a $C^{1M}$ regular immersion the Gauss curvature and second fundamental form are defined as usual.  The second fundamental form is still symmetric and one still has
\[K = \frac{\ell n - m^2}{EG-F^2}.\]
\noindent Without loss of generality we assume $N=N_{asyche} = \frac{f_x \times f_y}{\sin{\theta}}$ where $\theta$, the angle from $f_x$ to $f_y$, satisfies $0 < \theta < \pi$.  Hence $<f_x,f_y> = \cos \theta$.  The condition of being asymptotic Chebyshev with $K=-1$ implies $<f_x,N_x>=0$, $<f_y,N_y>=0$ and (without loss of generality) $f_{xy}= \sin{\theta}\; N$.  We can strengthen Theorem \ref{hwthm} as follows.
\begin{thm} \label{betterhw}
With the same assumptions as in Theorem \ref{hwthm} we have $N=N_{asyche}$ is $C^{1M}$, $N_{xy}=N_{yx}=\cos \theta \;N$, $f_x = N \times N_x$ and $f_y=-N \times N_y$.  
\end{thm}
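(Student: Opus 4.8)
The plan is to bootstrap from the $C^1$ regularity that $N$ inherits from $f_{orig}$, and then promote it to $C^{1M}$ using the single structural fact $f_{xy}=\sin\theta\,N$. First I would record that $N=N_{asyche}$ is $C^1$: since the oriented normal line is reparametrization-invariant, $N_{asyche}=\pm N_{orig}\circ T$, and $N_{orig}$ is $C^1$ (as $f_{orig}\in C^2$) while $T$ is $C^1$ by Theorem \ref{hwthm}, so the composition is $C^1$ and the locally constant sign is harmless. With $N$ now known to be $C^1$, the first-order relations drop out algebraically: from $\langle N,N\rangle\equiv 1$ we get $N_x,N_y\perp N$, so both lie in $\mathrm{span}\{f_x,f_y\}$; the asymptotic conditions $\langle f_x,N_x\rangle=0$ and $\langle f_y,N_y\rangle=0$ force $N_x\parallel f_x^\perp$ and $N_y\parallel (f_y\times N)$; and pairing against the remaining tangent vector and using $m=\sin\theta$ fixes the constants, giving $N_x=-N\times f_x$ and $N_y=N\times f_y$.

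The two relations $f_x=N\times N_x$ and $f_y=-N\times N_y$ then follow immediately from the Grassmann identity $A\times(B\times C)=B\langle A,C\rangle-C\langle A,B\rangle$: one computes $N\times N_x=-N\times(N\times f_x)=-(N\langle N,f_x\rangle-f_x)=f_x$, and likewise $-N\times N_y=f_y$, using only $\langle N,f_x\rangle=\langle N,f_y\rangle=0$ and $|N|=1$.

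The heart of the matter is the $C^{1M}$ claim, and this is where I expect the only real obstacle, since $f$ need not be $C^2$ and neither $f_{xx}$ nor $f_{yy}$ need exist. The resolution is to differentiate the relations just obtained rather than the defining quotient $N=(f_x\times f_y)/\sin\theta$. Indeed $N_x=-N\times f_x$ is a cross product of $N$ (which is $C^1$, hence differentiable in $y$) with $f_x$ (which is differentiable in $y$ precisely because $f$ is $C^{1M}$), so $N_{xy}$ exists by the product rule, with no $f_{yy}$ appearing. Carrying out the differentiation, $N_{xy}=-N_y\times f_x-N\times f_{xy}$; the last term vanishes because $f_{xy}=\sin\theta\,N$ is parallel to $N$, and substituting $N_y=N\times f_y$ and applying the Grassmann identity gives $N_{xy}=\cos\theta\,N$. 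The symmetric computation differentiates $N_y=N\times f_y$ in $x$, where again $N\times f_{yx}=0$ since $f_{yx}=\sin\theta\,N$, yielding $N_{yx}=N_x\times f_y=\cos\theta\,N$. Both mixed partials therefore exist, are continuous (every factor is continuous), and agree, so $N$ is $C^{1M}$ with $N_{xy}=N_{yx}=\cos\theta\,N$, completing all four assertions. The one point demanding care throughout is that each differentiation be taken only in the direction that the $C^{1M}$ hypothesis licenses, which is exactly what makes the parallelism $f_{xy}\parallel N$ the decisive ingredient.
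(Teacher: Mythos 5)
Your proposal is correct and follows essentially the same route as the paper: both establish $f_x=N\times N_x$ and $f_y=-N\times N_y$ by expanding in the tangent basis and using the asymptotic conditions together with $\langle f_{xy},N\rangle=\sin\theta$, and both then obtain $N_{xy}=N_{yx}=\cos\theta\,N$ by differentiating the relation $N_x=-N\times f_x$ (resp.\ $N_y=N\times f_y$) in the transverse variable, with the term $N\times f_{xy}$ vanishing because $f_{xy}\parallel N$. The only cosmetic difference is that the paper computes the coefficients of $N\times N_x$ via inner products and extracts $\langle N_{xy},N\rangle$ separately, whereas you read off $N_x\parallel N\times f_x$ directly and finish with one Grassmann identity; your explicit remark that $N_{asyche}=\pm N_{orig}\circ T$ is $C^1$ makes precise a step the paper leaves implicit.
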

\begin{proof}
First note $<f_x,N_y> = \partial_y <f_x,N> - <f_{xy},N> = -\sin{\theta}$.  Similarly $<f_y,N_x> = -\sin \theta$.  Now let $N \times N_x = af_x+bf_y$.  We then have
\begin{eqnarray*}
<N \times N_x, f_x> &=& <f_x \times N, N_x> \\
&=& \frac{1}{\sin \theta} <f_x \times (f_x \times f_y), N_x> \\
&=& \frac{1}{\sin \theta} <f_x \times (f_x \times (f_y - \cos \theta f_x)), N_x> \\
&=& \frac{1}{\sin \theta} <-(f_y - \cos \theta f_x), N_x> \\
&=& 1,
\end{eqnarray*}
and similarly $<N \times N_x,f_y> = \cos \theta$.  Taken together we have $1=a+b \cos \theta$ and $\cos \theta = a \cos \theta + b$, which gives $b=0, a=1$ and hence $f_x=N \times N_x$.  Similarly $f_y = -N \times N_y$.  This also yields immediately $<N_x, N_x> =1$ and $<N_y,N_y> =1$.  Finally, by Lagrange's identity, $<N_x,N_y> = - \cos \theta$.  Next we claim that $N_{xy}$ exists.  $N \perp N_x$ implies $N \times f_x = N \times (N \times N_x) = -N_x$.  From this we have that $N_x$ is differentiable in $y$ and moreover
\[(N_x)_y = -(N \times f_x)_y =-(N_y \times f_x + N \times f_{xy})=-N_y \times f_x.\]
This implies $N_{xy}$ is parallel to $N$.  Similarly $N_{yx}$ is parallel to $N$.  But ${<\!N_{xy},N\!>}$ $= <N_x,N>_y - <N_x,N_y> = \cos \theta$ and similarly $<N_{yx},N> = \cos \theta$ and we have $N_{xy}=N_{yx}= \cos \theta N$.
\end{proof}
\begin{rem}\label{diffNback}
Note that $f_{orig}$ is $C^2$ and $f_{asyche}$ is $C^{1M}$, while $N_{orig}$ is $C^1$ and $N_{asyche}$ is $C^{1M}$.  Thus the change of coordinates decreases the differentiability of $f$ and increases the differentiability of $N$.
\end{rem}
\begin{thm} \label{hartwin} (Continuing Theorem \ref{betterhw})
Conversely, let $f_{asyche}:D_{(x,y)} \stackrel{C^{1M}}{\longrightarrow} \mathbb{R}^3$ be a regular immersion in asymptotic Chebyshev coordinates.  Assume  $N=N_{asyche}:= \frac{f_x \times f_y}{\sin{\theta}}$ is $C^{1M}$, $N_{xy}=N_{yx}=\cos \theta N$, $f_x = N \times N_x$, and $f_y=-N \times N_y$. (Note this implies  $K=-1$.)   Then, locally, there is a $C^1$ reparametrization $f_{graph}=f_{asyche} \circ S$ of $f_{asyche}$, where $S:\breve{D}_{(u,v)} \longrightarrow \breve{D}_{(x,y)}$, with $f_{graph} \; C^2$.
\end{thm}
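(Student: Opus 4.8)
The plan is to follow the method of \cite{HW}: pass to graph coordinates over the tangent plane and then exploit the fact that the Gauss map is assumed $C^{1M}$ (hence $C^1$) to upgrade the height function from $C^1$ to $C^2$. Fix $p=(x_0,y_0)$ and write $f:=f_{asyche}$. Since $f$ is a regular $C^1$ immersion, after a rigid motion of $\mathbb{R}^3$ I may assume that the tangent plane at $p$ is the first coordinate plane and $N(p)=(0,0,1)$. Then the projection $\Phi:=(f^1,f^2)$ onto the first two coordinates is a $C^1$ map whose differential at $p$ is invertible, so by the inverse function theorem it is a $C^1$-diffeomorphism from a small neighborhood $\breve{D}_{(x,y)}$ of $p$ onto a neighborhood $\breve{D}_{(u,v)}$ of $\Phi(p)$. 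I set $S:=\Phi^{-1}$, which is $C^1$, and $f_{graph}:=f\circ S$. By construction $f_{graph}(u,v)=(u,v,\psi(u,v))$ with $\psi:=f^3\circ S\in C^1$.

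The heart of the argument is that the Gauss map is a geometric object, so it transforms as $N_{graph}=N\circ S$. Because $N$ is $C^{1M}$ (hence $C^1$) and $S$ is $C^1$, the composition $N\circ S$ is again $C^1$. On the other hand, in graph coordinates the unit normal is $N_{graph}=W^{-1}(-\psi_u,-\psi_v,1)$ with $W=\sqrt{1+\psi_u^2+\psi_v^2}$, where $\psi_u,\psi_v$ are the genuine (a priori only continuous) partials of $\psi$. Since $N_{graph}(p)=(0,0,1)$, the third component $N_{graph}^3=W^{-1}$ stays positive near $p$, so $W=1/N_{graph}^3$ is $C^1$ and
\[ \psi_u=-\frac{N_{graph}^1}{N_{graph}^3}, \qquad \psi_v=-\frac{N_{graph}^2}{N_{graph}^3} \]
are $C^1$ functions of $(u,v)$. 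Consequently all pure and mixed second partials of $\psi$ exist and are continuous, so $\psi\in C^2$, and therefore $f_{graph}=(u,v,\psi)$ is $C^2$, as claimed. I emphasize that the construction is purely local around $p$, which is all the statement asserts.

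The step I expect to be the crux is the last extraction, and the subtlety is entirely one of bookkeeping at the $C^1$ level: one must verify that passing through the \emph{merely} $C^1$ change of coordinates $S$ does not destroy the $C^1$-regularity of the normal, and that the quantities read off from $N_{graph}$ really are the partials of the height function. The former is algebraic — composition of two $C^1$ maps is $C^1$, and the displayed ratios involve only $C^1$ operations with a nonvanishing denominator, so no differentiability is lost — and the latter is automatic from $N_{graph}\perp (f_{graph})_u,(f_{graph})_v$. The structural hypotheses $N_{xy}=N_{yx}=\cos\theta\,N$, $f_x=N\times N_x$, $f_y=-N\times N_y$ (which force $K=-1$) enter only to guarantee that $N$ is a well-defined $C^{1M}$ weakly-regular Gauss map to begin with; once that is secured, the entire regularity boost from $C^{1M}$ to $C^2$ is controlled by the first derivatives of $\psi$, which the Gauss map delivers for free.
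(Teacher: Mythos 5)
Your proof is correct and follows essentially the same route as the paper's: pass to graph coordinates over the tangent plane via a rigid motion, obtain the $C^1$ change of coordinates from the inverse function theorem, and recover $\psi_u,\psi_v$ as $C^1$ ratios of the components of the ($C^1$) normal expressed in graph coordinates, whence $\psi\in C^2$. The only immaterial difference is that you argue directly in $(u,v)$ (noting $N\circ S$ is $C^1$), while the paper first records that $h_u,h_v$ are $C^1$ as functions of $(x,y)$ and then composes with the $C^1$ inverse of the coordinate change.
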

\noindent  Here again we are assuming $\theta$, the angle from $f_x$ to $f_y$, satisfies $0 < \theta < \pi$.  The subscript  ``graph" is used because the type of coordinates used are often called graph coordinates. $\breve{D}$ denotes an appropriate small local domain about $(u,v)$.  A similar result is in \cite{HW}, but assuming more differentiability.  Our proof is a modification of theirs.
\begin{proof}
Consider $f(x,y)$ in asymptotic coordinates.  In a neighborhood of any point there is a rigid motion $R:\mathbb{R}^3 \longrightarrow \mathbb{R}^3$ so that $f$ is a graph.  Then $u(x,y)=\pi_1 (R(f(x,y)))$ and $v(x,y)=\pi_2(R(f(x,y))$ are both $C^1$ in $x,y$.  Let $\widehat{f}(u,v) = (u,v,h(u,v))$ denote the function in these new coordinates.  So 
\[\widehat{N}(u,v) = \frac{(-h_u,-h_v,1)}{\sqrt{1+h_u^2+h_v^2}}\]
and
\[N(x,y) = \widehat{N}(u(x,y),v(x,y)) = \frac{(-h_u(x,y),-h_v(x,y),1)}{\sqrt{1+h_u^2(x,y)+h_v^2(x,y)}}\]
is $C^1$ in $x,y$ by assumption.  From this it follows that $h_u(x,y)$ and $h_v(x,y)$ are $C^1$ in $x,y$.  In other words $h_u(u(x,y),v(x,y))$ and $h_v(u(x,y),v(x,y))$ are $C^1$ in $x,y$.  Since above we noted that both $u(x,y)$ and $v(x,y)$ are $C^1$ in $x,y$, we conclude that $h_u(u,v)$ and $h_v(u,v)$ are $C^1$ in $u,v$.  Which gives $h$ is $C^2$ in $u,v$.
\end{proof}

\section{A $C^{1M}$ version of Hilbert's Theorem}\label{hilbert}
\subsection{No $C^2$ Exotic $\mathbb{R}^2$'s}
In order to prove a $C^{1M}$ version of Hilbert's Theorem, it is necessary to first give a global version of Theorem \ref{hartwin}.  For that we need the following easy Lemma which follows directly from a result of Whitney.  Although perhaps well-known to the experts, it does not appear to be in the literature.
Let ${\cal{S}}^k$ denote the standard maximal $C^k$-atlas on $\mathbb{R}^2$.
\begin{lem} \label{exotic}
Any $C^k$-structure on $\mathbb{R}^2$ is $C^k$-diffeomorphic to the standard $C^k$-structure on $\mathbb{R}^2$.
\end{lem}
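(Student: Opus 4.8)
The plan is to factor the problem through the smooth category, where the absence of exotic structures on $\mathbb{R}^2$ is classical, and then to descend the resulting diffeomorphism back to differentiability class $C^k$. Write $\mathcal{A}$ for an arbitrary maximal $C^k$-atlas on the topological manifold $\mathbb{R}^2$, with $k \ge 1$. First I would invoke Whitney's theorem on differentiable structures (Whitney, \emph{Differentiable manifolds}, Ann. of Math. 1936): every maximal $C^k$-atlas with $k \ge 1$ contains a $C^\infty$-subatlas $\mathcal{B} \subseteq \mathcal{A}$, and any two such smooth subatlases are $C^\infty$-equivalent. Since the charts of $\mathcal{B}$ already lie in $\mathcal{A}$, they are $C^k$-compatible with every chart of $\mathcal{A}$; thus the smooth structure generated by $\mathcal{B}$ is $C^k$-compatible with $\mathcal{A}$, and the identity map $(\mathbb{R}^2,\mathcal{A}) \to (\mathbb{R}^2,\mathcal{B})$ is a $C^k$-diffeomorphism.

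Next I would use the fact that in dimension two there are no exotic smooth structures: any $C^\infty$-manifold homeomorphic to $\mathbb{R}^2$ is $C^\infty$-diffeomorphic to the standard $\mathbb{R}^2$. This is classical, following from Rad\'o's triangulation theorem for surfaces together with the uniqueness of the smooth (equivalently PL) structure on a surface. Applying this to $(\mathbb{R}^2,\mathcal{B})$ yields a $C^\infty$-diffeomorphism $\Phi:(\mathbb{R}^2,\mathcal{B}) \to (\mathbb{R}^2,\mathcal{S}^\infty)$, where $\mathcal{S}^\infty$ denotes the standard smooth structure underlying $\mathcal{S}^k$.

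Finally I would descend. A map that is $C^\infty$ with respect to the smooth refinements is \emph{a fortiori} $C^k$ with respect to the ambient $C^k$-structures, because each smooth chart is one of the $C^k$-charts. Hence $\Phi$, read as a map $(\mathbb{R}^2,\mathcal{A}) \to (\mathbb{R}^2,\mathcal{S}^k)$, is a $C^k$-diffeomorphism, and composing it with the identity $C^k$-diffeomorphism of the first paragraph exhibits the desired $C^k$-diffeomorphism onto the standard structure.

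The only delicate point — and the step I would take most care to state cleanly, rather than a step I expect to be genuinely hard — is the bookkeeping between differentiability classes: one must verify that Whitney's smooth refinement sits inside $\mathcal{A}$ as a $C^k$-subatlas, so that a $C^\infty$-diffeomorphism of the refinements is automatically a $C^k$-diffeomorphism of the original structures. The genuine mathematical content is entirely imported (Whitney for the $C^k \to C^\infty$ reduction, Rad\'o for two-dimensional uniqueness); once those are cited, no further analysis intrinsic to $\mathbb{R}^2$ is required. It is worth recording explicitly that the argument needs $k \ge 1$, since Whitney's reduction fails in the purely topological case $k=0$.
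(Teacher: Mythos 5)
Your proposal is correct and follows essentially the same route as the paper: invoke Whitney's theorem to find a $C^\infty$-subatlas of the given maximal $C^k$-atlas, use the classical absence of exotic smooth structures on $\mathbb{R}^2$ to get a $C^\infty$-diffeomorphism to the standard structure, and then observe that this map is automatically a $C^k$-diffeomorphism of the original $C^k$-structures. Your version is, if anything, slightly more careful about the descent bookkeeping and about recording the hypothesis $k \ge 1$.
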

\begin{proof}
%We are trying to show that if ${\cal{A}}^k$ is a maximal $C^k$-atlas of $\mathbb{R}^2$, then there exist a 1:1 onto $C^k$-differentiable function $f^k: (\mathbb{R}^2,{\cal{A}}^k) \longrightarrow (\mathbb{R}^2,{\cal{S}}^k)$ such that $f^{k^{-1}}$ is also $C^k$-differentiable.  Recall $f^k$ is differentiable means that 
%\[ x^k \in {\cal{A}}^k \;\;\&\;\; y^k \in {\cal{S}}^k \;\;\mbox{implies} \;\;y^k \circ f^k \circ x^{k^{-1}} \in {\cal{S}}^k. \]
%Similarly $f^{k^{-1}}$ is differentiable  means that 
%\[ x^k \in {\cal{A}}^k \;\;\&\;\; y^k \in {\cal{S}}^k \;\;\mbox{implies} \;\;x^k \circ f^{k^{-1}} \circ y^{k^{-1}} \in {\cal{S}}^k.  \]
Whitney proved that if ${\cal{A}}^k$ is a maximal $C^k$-atlas on $\mathbb{R}^2$, then there exists (at least one) maximal $C^\infty$-atlas ${\cal{A}}^\infty \subset {\cal{A}}^k$.  Since our claim is true for $k=\infty$ we know there exist a $C^\infty$-diffeomorphism $h: (\mathbb{R}^2,{\cal{A}}^\infty) \longrightarrow (\mathbb{R}^2,{\cal{S}}^\infty)$.
% such that
%\[ x^\infty \in {\cal{A}}^\infty \;\;\&\;\; y^\infty \in {\cal{S}}^\infty \;\;\mbox{implies} \;\;y^\infty \circ f^\infty \circ x^{\infty^{-1}} \in {\cal{S}}^\infty \]
%with the property that 
%\[ x^\infty \in {\cal{A}}^\infty \;\;\&\;\; y^\infty \in {\cal{S}}^\infty \;\;\mbox{implies} \;\;x^\infty \circ f^{\infty^{-1}} \circ y^{\infty^{-1}} \in {\cal{S}}^\infty.  \]
%Let $i_{\cal{A}}: (\mathbb{R}^2,{\cal{A}}^\infty) \longrightarrow (\mathbb{R}^2,{\cal{A}}^k)$ denote the identity map.  Let $i_{\cal{S}}: (\mathbb{R}^2,{\cal{S}}^\infty) \longrightarrow (\mathbb{R}^2,{\cal{S}}^k)$ also denote the identity map.  Let $f^k:= i_{\cal{S}} \circ f^\infty \circ i_{\cal{A}}^{-1}$.  $C^\infty$-diffeomorphism $f^\infty: (\mathbb{R}^2,{\cal{A}}^\infty) \longrightarrow (\mathbb{R}^2,{\cal{S}}^\infty)$.
We claim that $h$ is also a $C^k$-diffeomorphism $h: (\mathbb{R}^2,{\cal{A}}^k) \longrightarrow (\mathbb{R}^2,{\cal{S}}^k)$. 
We have $\tilde{\psi} \in {\cal{S}}^\infty$ if and only if $\tilde{\psi} \circ h \in {\cal{A}}^\infty$.  In particular since $I \in {\cal{S}}^\infty$ it follows that $h \in {\cal{A}}^\infty$.  Hence $h \in {\cal{A}}^k$ and $\psi \in {\cal{S}}^k$ if and only if $\psi \circ h \in {\cal{A}}^k$.
% In particular, if $\phi \in {\cal{A}}^k$, then $h \circ \phi \in {\cal{S}}^k$.  This is because $h \circ \phi \circ I$ is $C^k$. (Sorry, Sepp I don't see why this is true, I do see that the all the $h \circ \phi$'s are $C^k$ related to each other, but not why they are $C^k$ related to $I$, maybe something simple. ???)
%We claim $f^\infty$ is our desired $C^k$-diffeomorphism.  It is certainly 1:1 onto.  We need to show that $f^k$ and its inverse $f^{k^{-1}}$ are $C^k$-differentiable.  So we assume 
%\[x^k \in {\cal{A}}^k \;\;\&\;\; y^k \in {\cal{S}}^k.\]  Then
%\begin{eqnarray}
%y^k \circ f^k \circ x^{k^{-1}} &=& y^k \circ i_{\cal{S}} \circ  f^\infty \circ  i_{\cal{A}}^{-1} \circ x^{k^{-1}} \\
%&=& y^k \circ  f^\infty  \circ x^{k^{-1}} \\
%&=& y^k \circ y^{\infty^{-1}} \circ y^\infty \circ f^\infty \circ x^{\infty^{-1}} \circ x^\infty  \circ x^{k^{-1}} .
%\end{eqnarray}
%Now $y^k \circ y^{\infty^{-1}} \in {\cal{S}}^k$, 
%$y^\infty \circ f^\infty \circ x^{\infty^{-1}} \in {\cal{S}}^k$, 
%and $x^\infty  \circ x^{k^{-1}} \in {\cal{S}}^k$.  
%So $y^k \circ f^k \circ x^{k^{-1}} \in {\cal{S}}^k$ 
%and hence $f^k$ is $C^k$-differentiable.  Finally using the same argument we have $x^k \circ f^{k^{-1}} \circ y^{k^{-1}} = x^k \circ x^{\infty^{-1}} \circ x^\infty \circ f^{\infty^{-1}} \circ y^{\infty^{-1}} \circ y^\infty  \circ y^{k^{-1}}$.  Hence $f^{k^{-1}}$ is also $C^k$ differentiable.
\end{proof}
\subsection{A $C^{1M}$ Hilbert's Theorem}
\begin{thm}\label{globalhw} (Globalizing Theorem \ref{hartwin})
Let $D$ be a rectangle in $\mathbb{R}^2$. Let $f_{asyche}:D \stackrel{C^{1M}}{\longrightarrow} \mathbb{R}^3$ be a regular immersion satisfying all the conditions of Theorem \ref{hartwin}.  Then there exist a $C^1$ diffeomorphism $\rho:D \longrightarrow D$  such that $f_{new} = f_{asyche} \circ \rho : D \longrightarrow \mathbb{R}^3$ is a regular $C^2$-immersion. 
\end{thm}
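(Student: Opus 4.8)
The plan is to upgrade the \emph{local} graph reparametrizations furnished by Theorem \ref{hartwin} into a single \emph{global} $C^2$ structure on $D$, and then to straighten that structure to the standard one by means of Lemma \ref{exotic}. In outline: I build a new $C^2$-atlas $\mathcal{A}^2$ on $D$ whose charts are the graph coordinates, observe that these charts are only $C^1$-compatible with the standard coordinates of $D$ (so $\mathcal{A}^2 \subset \mathcal{S}^1$), and then transport the resulting $C^2$ structure to the standard one.

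First I would cover $D$ by open patches $U_\alpha$, on each of which Theorem \ref{hartwin} applies. On $U_\alpha$ the local reparametrization $S_\alpha$ is a $C^1$ diffeomorphism with $f_{asyche}\circ S_\alpha$ of class $C^2$; writing $\phi_\alpha := S_\alpha^{-1} = (u_\alpha,v_\alpha)$ gives a chart in which $f_{asyche}$ is represented by the $C^2$ graph $\hat f_\alpha$, so that $f_{asyche} = \hat f_\alpha \circ \phi_\alpha$ on $U_\alpha$. Each $\phi_\alpha$ is a $C^1$ diffeomorphism onto an open subset of $\mathbb{R}^2$, hence a homeomorphism compatible with the topology of $D$, and in particular lies in the standard maximal $C^1$-atlas $\mathcal{S}^1$.

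The crucial step is to verify that $\mathcal{A}^2 := \{(U_\alpha,\phi_\alpha)\}$ is a genuine $C^2$-atlas, i.e.\ that the transition maps $\phi_\beta \circ \phi_\alpha^{-1}$ are $C^2$ on overlaps. Here I would use, as in the proof of Theorem \ref{hartwin}, that $\phi_\beta$ is built from a graph over a fixed rigid frame $R_\beta$: the coordinate $(u_\beta,v_\beta)$ of a surface point $p$ is just the first two entries of $R_\beta(p)$, a linear (hence $C^\infty$) function of $p \in \mathbb{R}^3$. On the overlap $p = \hat f_\alpha(u_\alpha,v_\alpha)$ with $\hat f_\alpha$ of class $C^2$, so $\phi_\beta \circ \phi_\alpha^{-1} = (\pi_1,\pi_2)\circ R_\beta \circ \hat f_\alpha$ is a composition of a $C^2$ map with smooth maps, hence $C^2$. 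Thus $\mathcal{A}^2$ defines a $C^2$ structure on the topological manifold $D$, and by construction $f_{asyche}:(D,\mathcal{A}^2)\to \mathbb{R}^3$ is $C^2$.

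Finally I would invoke Lemma \ref{exotic}. Since an open rectangle is $C^\infty$-diffeomorphic to $\mathbb{R}^2$, I may regard $\mathcal{A}^2$ as a $C^2$ structure on $\mathbb{R}^2$, and the Lemma then produces a $C^2$ diffeomorphism $\rho:(D,\mathcal{S}^2)\to (D,\mathcal{A}^2)$. Setting $f_{new} := f_{asyche}\circ \rho$, the composite is $C^2$ in the standard coordinates (being $f_{asyche}$, which is $C^2$ on $(D,\mathcal{A}^2)$, precomposed with the $C^2$ map $\rho$), and it is regular because $\rho$ is a diffeomorphism and $f_{asyche}$ is regular. To see that $\rho$ is merely a $C^1$ self-diffeomorphism of $D$ in the ordinary sense, read it in standard coordinates as $\phi_\alpha^{-1}\circ(\phi_\alpha\circ\rho)$, where $\phi_\alpha\circ\rho$ is $C^2$ and $\phi_\alpha^{-1}$ is only $C^1$; the same applies to $\rho^{-1}$. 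I expect the main obstacle to be exactly the $C^2$-compatibility of the local graph charts on overlaps (the gluing into $\mathcal{A}^2$), together with the careful bookkeeping among the three atlases $\mathcal{S}^1$, $\mathcal{A}^2$, and $\mathcal{S}^2$ needed to guarantee simultaneously that $f_{new}$ is $C^2$ while $\rho$ is required only to be $C^1$.
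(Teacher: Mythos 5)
Your proposal follows essentially the same route as the paper: cover $D$ by the local graph charts from Theorem \ref{hartwin}, check that the transition maps are $C^2$ because each equals a (rigid-motion-plus-)projection composed with a $C^2$ graph map, obtain a $C^2$ atlas $\mathcal{A}$ in which $f_{asyche}$ is a $C^2$ immersion, and then straighten $(D,\mathcal{A})$ to the standard structure via Lemma \ref{exotic} to produce $\rho$. The argument is correct, and your extra remarks (the overlap computation via $R_\beta$ and the explanation of why $\rho$ is only $C^1$ in standard coordinates) only make explicit what the paper leaves implicit.
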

\begin{proof} 
First recall from Theorem \ref{hartwin} that $f_{graph}= f_{asyche} \circ S: \breve{D} \longrightarrow \mathbb{R}^3$ is $C^2$.  Let $\pi:\mathbb{R}^3 \longrightarrow \mathbb{R}^2$ denote the corresponding projection.  Then $\pi \circ f_{graph} = I$, and $\pi \circ f_{asyche} = S^{-1}$.  
%It follows that the image $f_{graph}(\breve{D}) \subset \mathbb{R}^3$ is a $C^2$ submanifold \textcolor{blue}{(\cite{S} p. 80)}.  In particular $f_{graph}: \breve{D} \longrightarrow f_{graph}(\breve{D})$ is a $C^2$ diffeomorphism with $f_{graph}^{-1}: f_{graph}(\breve{D}) \longrightarrow \breve{D}$ also $C^2$.
%Again from Theorem \ref{hartwin} we have that for all $p \in D$ there exist a $(\phi_{\breve{D}},\breve{D})$ with $p \in \breve{D}$, $\phi_{\breve{D}}:\breve{D} \longrightarrow \mathbb{R}^2$ a $C^1$ diffeomorphism with $f_{graph} = f_{asyche} \circ \phi_{\breve{D}}^{-1}$ $C^2$.  
We claim the $\{(S_{\breve{D}^{-1}},\breve{D})\}$ is a set of $C^2$-related charts.  This is because 
\begin{eqnarray}
S_{\breve{D'}}^{-1} \circ S_{\breve{D}} &=& \pi ' \circ f_{asyche} \circ S_{\breve{D}}, \\
&=& {f'}_{graph} \circ S_{\breve{D}}
\end{eqnarray}
is $C^2$.
%and both ${f'}_{graph}^{-1}$ and  $f_{graph}$ are $C^2$.  
Let $\mathcal{A}$ be the unique maximal $C^2$ atlas containing all these charts.  By construction, relative to this $C^2$ differentiable structure the map $f_{asyche}:D_{\mathcal{A}} \longrightarrow \mathbb{R}^3$ is a regular $C^2$-immersion.  By Lemma \ref{exotic}$\;D_{\mathcal{A}}$ is $C^2$ diffeomorphic to $D_{\mathcal{S}^2}$ by some (global) $C^2$ diffeomorphism $\rho: D_{\mathcal{S}^2} \longrightarrow D_{\mathcal{A}}$.
\end{proof}
As an immediate consequence of Theorem \ref{globalhw} we obtain
\begin{thm}($C^{1M}$-type Hilbert Theorem)
Let $D$ be rectangle in $\mathbb{R}^2$ and  $f_{asyche}:D \stackrel{C^{1M}}{\longrightarrow} \mathbb{R}^3$ a regular immersion 
satisfying all the conditions of Theorem \ref{globalhw}.  Then the metric induced by $f_{asyche}$ is not complete.
\end{thm}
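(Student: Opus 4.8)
The plan is to deduce this theorem from the classical Hilbert theorem by using the global $C^2$-reparametrization furnished by Theorem \ref{globalhw}, and then to transport the resulting failure of completeness back to $f_{asyche}$ along a $C^1$ diffeomorphism. The guiding principle is that completeness is a property of the underlying length space of the induced metric, and so is invariant under the reparametrization $\rho$; the real work of raising the regularity to a level where classical results apply has already been done in Theorem \ref{globalhw}.

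First I would invoke Theorem \ref{globalhw} to obtain a global $C^1$ diffeomorphism $\rho:D \to D$ such that $f_{new} := f_{asyche} \circ \rho : D \to \mathbb{R}^3$ is a regular $C^2$-immersion; by the coordinate-invariance of the Gauss curvature established earlier, $f_{new}$ still satisfies $K \equiv -1$. Writing $g_{asyche}$ and $g_{new}$ for the first fundamental forms of $f_{asyche}$ and $f_{new}$, the chain rule applied to $f_{new} = f_{asyche} \circ \rho$ gives $g_{new} = \rho^* g_{asyche}$, so $\rho:(D,g_{new}) \to (D,g_{asyche})$ is an isometry of (continuous) Riemannian metrics. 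Since a $C^1$ diffeomorphism carries $C^1$ curves to $C^1$ curves of equal length, $\rho$ preserves the induced length distances and hence preserves completeness: $(D,g_{asyche})$ is complete if and only if $(D,g_{new})$ is.

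Next I would apply the classical Hilbert theorem to $f_{new}$. As $f_{new}$ is now a genuine $C^2$ regular immersion of the simply connected rectangle $D$ into $\mathbb{R}^3$ with $K \equiv -1$, its hypotheses are met. Were $g_{new}$ complete, the pair $(D,g_{new})$ would be a complete surface of constant curvature $-1$ isometrically immersed in $\mathbb{R}^3$, which Hilbert's theorem forbids. Hence $g_{new}$ is not complete, and by the isometry of the previous step neither is $g_{asyche}$, which is the assertion.

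The substance of the argument lies entirely in Theorem \ref{globalhw}: it is what upgrades the a priori merely $C^{1M}$ immersion to one of class $C^2$, so that the classical Hilbert theorem---whose standard proofs require control of the second fundamental form---can be invoked at all. I therefore expect the main (indeed only) conceptual point to be this regularity reduction, which is already in hand. The one mild care needed is that completeness for a $C^{1M}$ immersion must be read off from the length metric of its merely continuous first fundamental form; but the invariance of that length metric under the $C^1$ map $\rho$ is immediate, so I anticipate no genuine obstacle at that step.
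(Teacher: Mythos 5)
Your argument has the same architecture as the paper's: use Theorem \ref{globalhw} to produce the global $C^1$ diffeomorphism $\rho$ with $f_{new}=f_{asyche}\circ\rho$ a regular $C^2$-immersion, observe that $\rho$ is an isometry of the induced length metrics (so completeness transfers), and then invoke a classical incompleteness theorem for $f_{new}$. The one substantive difference is the classical input at the last step: you cite Hilbert's theorem, while the paper cites Efimov's theorem in the form given by Klotz-Milnor. This is not an idle substitution. The textbook proofs of Hilbert's theorem (global asymptotic Chebyshev nets, the sine-Gordon equation, the area argument) require more regularity than $C^2$ --- indeed, the difficulty of constructing asymptotic coordinates for merely $C^2$ immersions is exactly the Hartman--Wintner issue this paper is built around, so invoking ``classical Hilbert'' for a $C^2$ immersion quietly begs a regularity question. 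Efimov's theorem, as established for $C^2$ immersions with $K\le -\mathrm{const}<0$ in the Klotz-Milnor reference, applies directly and is the citation the paper uses. Your proof would be fully correct if you either replaced Hilbert by Efimov/Klotz-Milnor at that step, or supplied a reference for a genuinely $C^2$ version of Hilbert's theorem; as written, that single citation is the only soft spot. Your explicit discussion of why $\rho$ preserves the length metric is slightly more careful than the paper's one-line remark and is a welcome addition.
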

\begin{proof}
By Theorem \ref{globalhw} the map $f_{new} = f_{asyche} \circ \rho: D \longrightarrow \mathbb{R}^3$ is a regular $C^2$-immersion.
By Efimov's Theorem \cite{E},\cite{KM} the immersion $f_{new}$ induces an incomplete metric. Since $\rho$ is a global diffeomorphism and a reparametization it is an isometry.  Hence $f_{asyche}$ also induces an incomplete metric.
\end{proof}

\section{Standard Cusp Line Example: The PS-sphere} \label{cusppics}
We use the following parametrizations for the pseudo-sphere.  In curvature line coordinates
\[f(u,v) = (\cos u \;\mathrm{sech} \;v, \sin u \;\mathrm{sech} \;v, v - \tanh v ),\]
and in asymptotic coordinates
\[f(x,y) = (\cos(x-y) \;\mathrm{sech}(x+y), \sin(x-y) \; \mathrm{sech}(x+y), (x+y) - \tanh(x+y).\]
Note that $\Vert f_x \Vert = \Vert f_y \Vert = 1$.  We define two normals, both in asymptotic coordinates, as follows:
\[N_{standard} = \frac{\f_x \times \f_y}{\Vert \f_x \times \f_y \Vert} \]
whenever $\Vert \f_x \times \f_y \Vert \neq 0$ (and undefined otherwise),
and
\[N = N_{front} = (\cos(x-y)\; \tanh(x+y),-\sin(x-y) \tanh(x+y),\mathrm{sech}(x+y))\]
which is defined for all $(x,y)$.

Note that $\Vert N \Vert = 1$, $\Vert N_x \Vert = 1$, $\Vert N_y \Vert = 1$, $N \perp f_x$, and $N \perp f_y$.  Furthermore
\[f_x = N \times N_x, f_y = -N \times N_y.\]
We define 
\[f_x^\perp = N \times f_x.\]
Note that ${f_x,f_x^\perp, N}$ is a positively oriented orthonormal frame with \[\det(f_x,f_x^\perp,N)=1.\]
This allows us to unambiguously define the oriented angle $\omega$ from $f_x$ to $f_y$ in the the oriented plane spanned by $f_x$ and $f_x^\perp$.
\[\omega = \angle(\f_x,\f_y).\]
Note that
\[\omega = 4 \tan^{-1} (e^{x+y}).\]
Note that $0<\omega<2 \pi$, $0<\frac{\omega}{2} < \pi$, and $0<\frac{\omega}{2} + \frac{\pi}{2} < 1.5 \pi < 2 \pi$.
Furthermore $\omega_x = 2\; \mathrm{sech}(x+y)$, $\omega_y = 2\; \mathrm{sech}(x+y)$, $\omega_{xy} = -2\; {\mathrm{sech}(x+y)} {\tanh(x+y)}$, and $\sin(\omega) = -2\;  \mathrm{sech}(x+y) \tanh(x+y)$.  In particular
\[\omega_{xy} = \sin(\omega).\]
Similarly we have
\[N_{xy} = \cos(\omega) N, \; f_{xy} = \sin(\omega) N.\]
Whenever $\omega \neq \pi$ (recall $0 < \omega < 2 \pi$) we have $N_{standard}$ is defined and 
\[N_{front} =  \frac{\sin \omega}{|\sin \omega|}  N_{standard}.\]
We define the globally positively oriented orthonormal curvature line  frame $e_1,e_2,N$ along asymptotic lines by
\begin{eqnarray*}
e_1 &=& \cos \frac{\omega}{2} f_x + \sin \frac{\omega}{2} f_x^\perp \\
e_2 &=& \cos (\frac{\omega}{2}+\frac{\pi}{2}) f_x + \sin (\frac{\omega}{2}+\frac{\pi}{2}) f_x^\perp.
\end{eqnarray*}
Note: $\det(e_1,e_2,N) =1$.  Figures \ref{cusp2}-\ref{fence} below show how the frame $e_1,e_2,N$ and in particular the front $N$ behave smoothly along the asymptotic curves even as it passes through the ``singular'' cusp line.

\begin{figure}
\hspace{-.25in}\includegraphics[scale=.5]{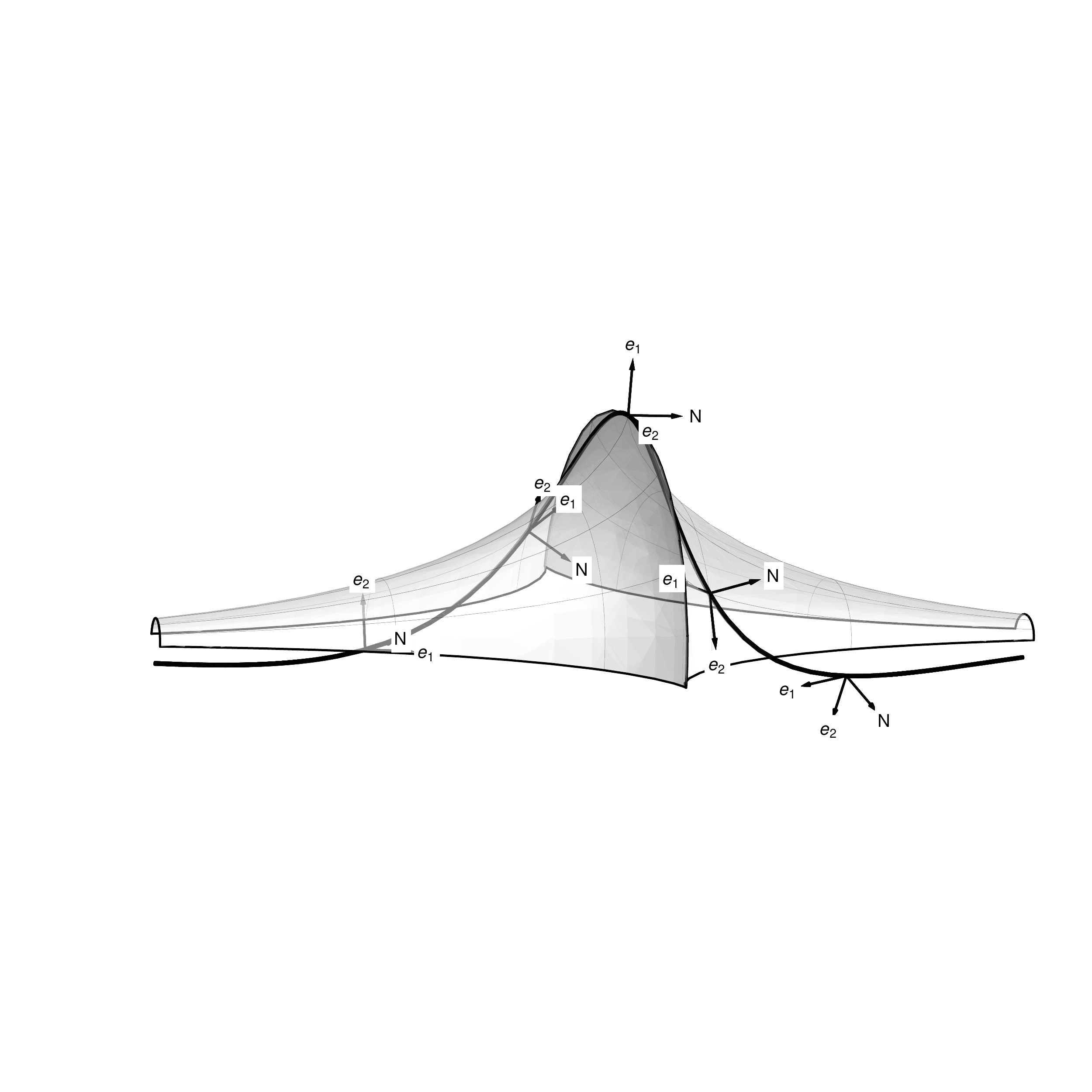}
\caption{Orthonormal Frame $(e_1,e_2,N)$ along an Asymptotic Curve on Surface}\label{cusp2}
\end{figure}
 
\begin{figure}
\begin{center}
\hspace{-.25in}\includegraphics[scale=.5]{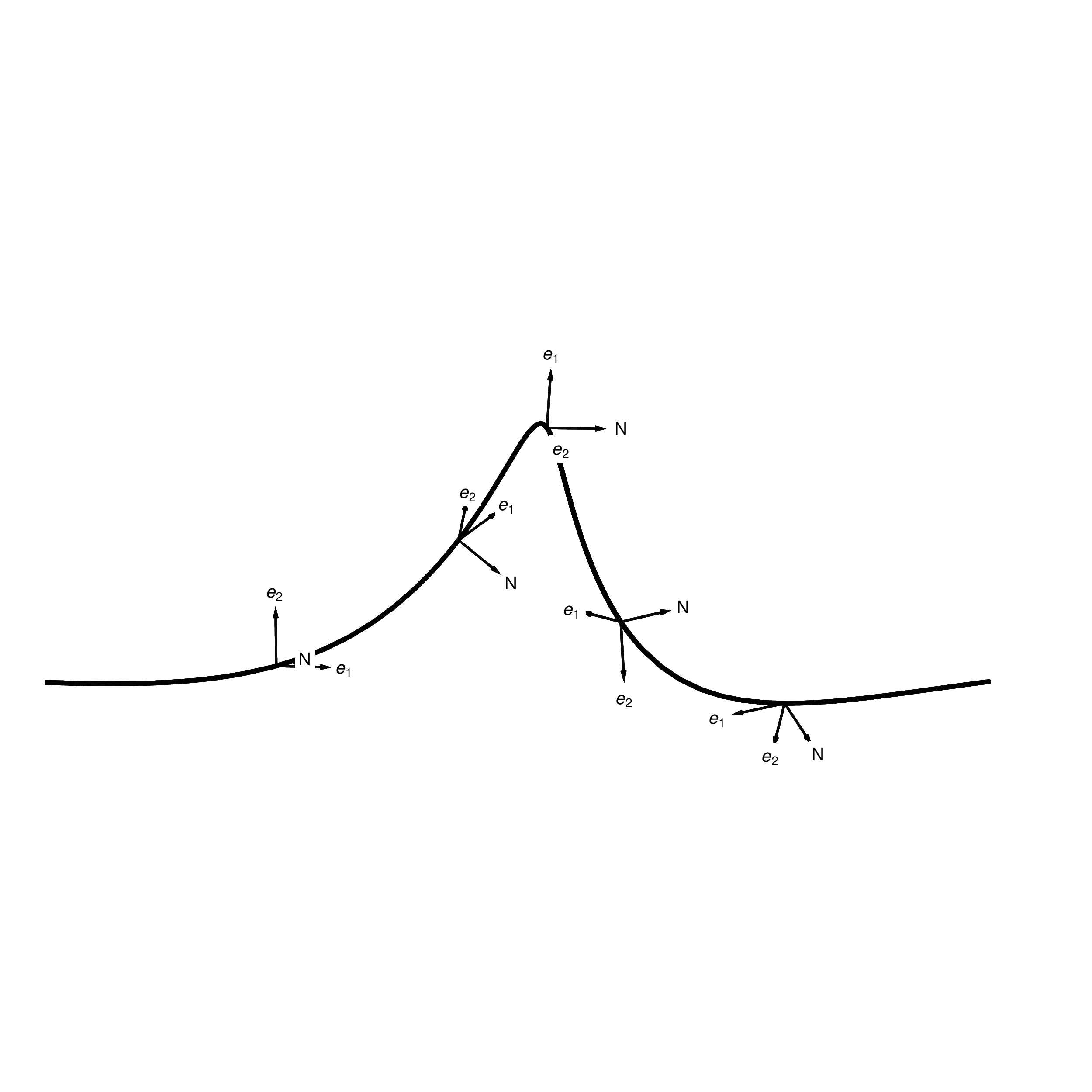}
\caption{The $(e_1,e_2,N)$ Frame Behaves Smoothly along an Asymptotic Curve}\label{curve2}
\end{center}
\end{figure}

 \begin{figure}
\begin{center}
\hspace{-.25in}\includegraphics[scale=.5]{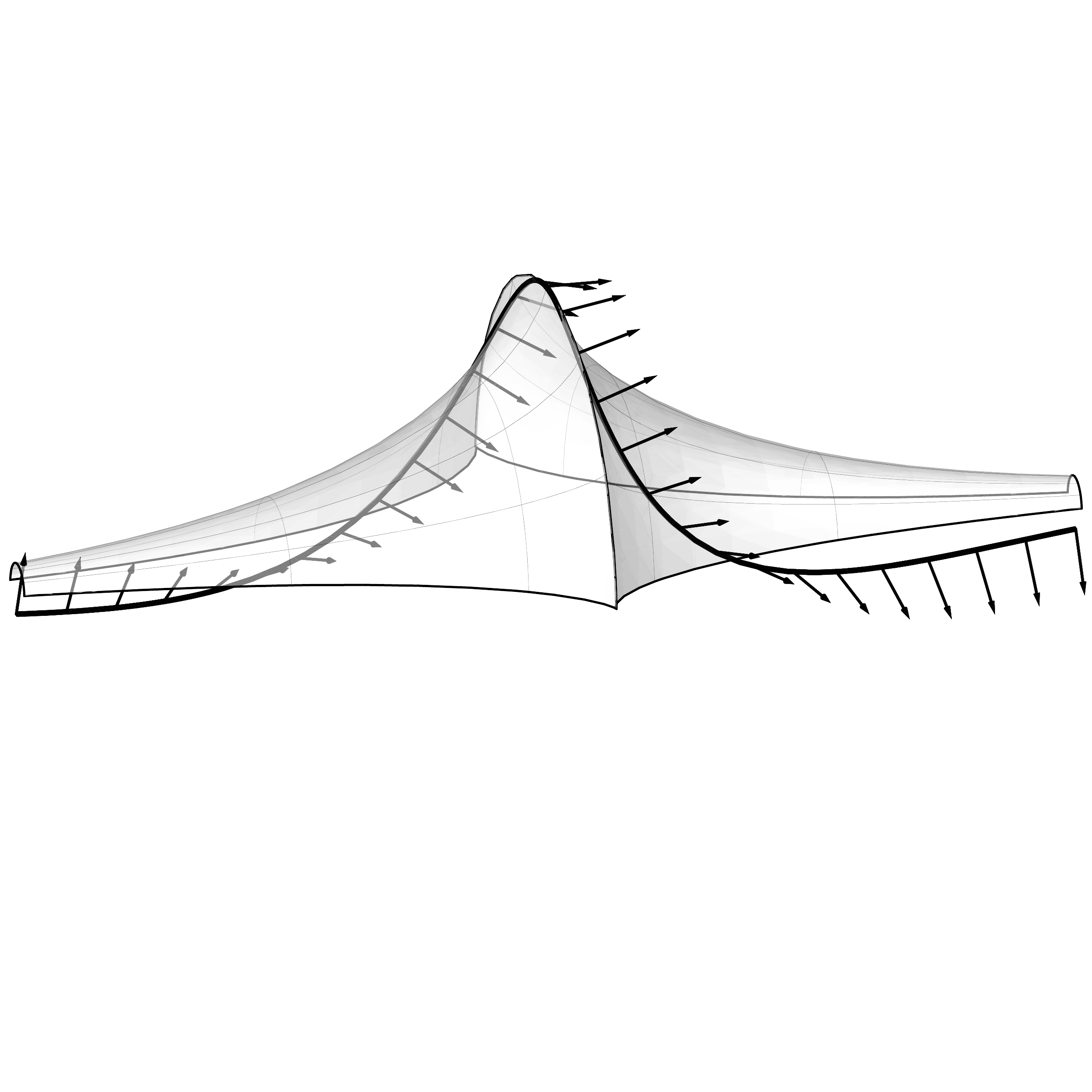}
\caption{The Front N along an Asymptotic Coordinate on the Surface}\label{fenceon}
\end{center}
\end{figure}
 
\begin{figure}
\begin{center}
\hspace{-.25in}\includegraphics[scale=.5]{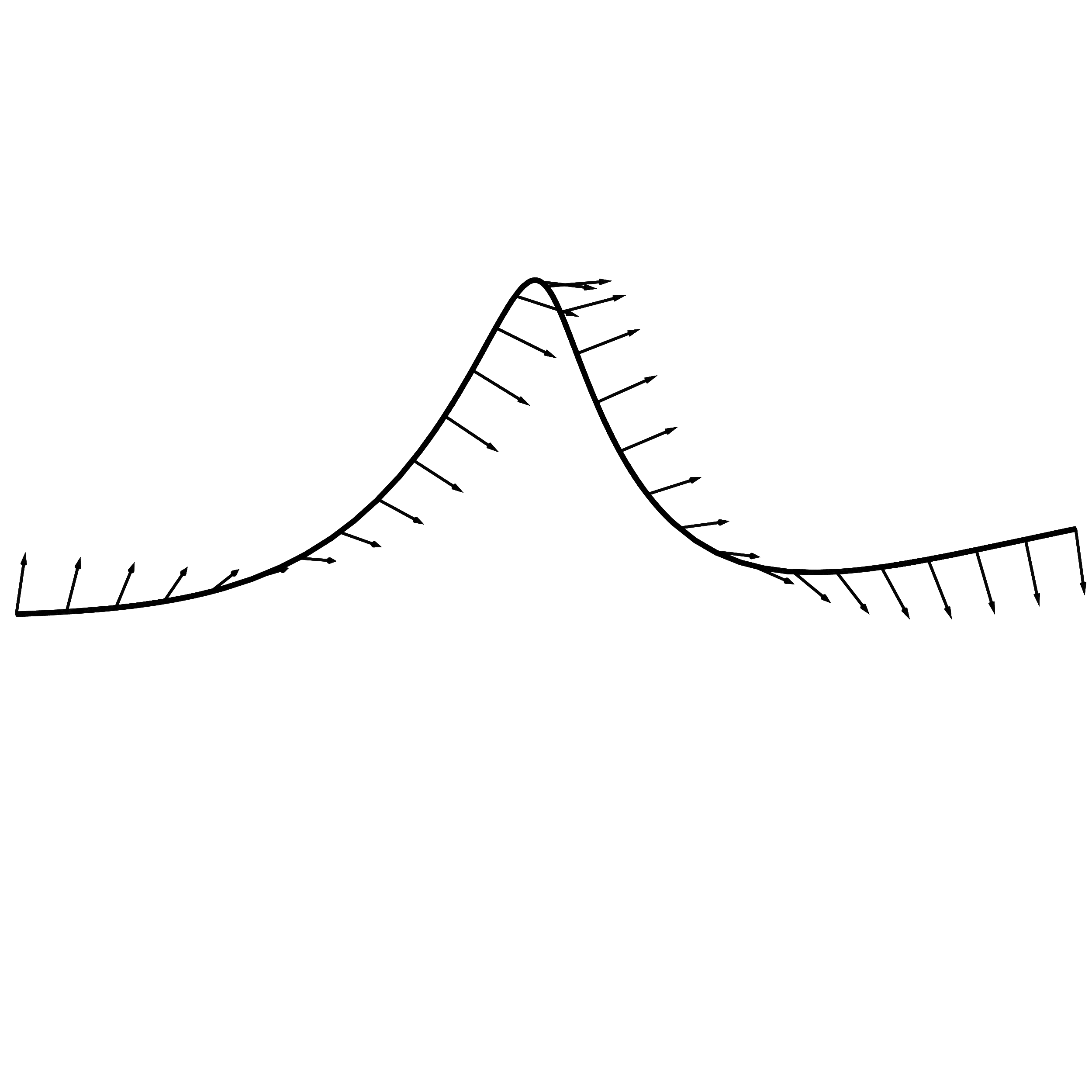}
\caption{The Front N has Nice Smooth Behavior in Asymptotic Coordinates}\label{fence}
\end{center}
\end{figure}

\newpage

\end{document}